\documentclass[twoside,11pt]{article}

%

\usepackage{jmlr2e}

\usepackage{graphicx}

\usepackage[utf8]{inputenc} 
\usepackage[T1]{fontenc}    
\usepackage{lmodern}
\usepackage{hyperref}       
\usepackage{url}            
\usepackage{booktabs}       
\usepackage{amsfonts}       
\usepackage{nicefrac}       
\usepackage{microtype}      

\usepackage{epsfig}
\usepackage{amssymb}
\usepackage{amsmath}
\usepackage{amsfonts}
\usepackage{bbding}
\usepackage{array}
\usepackage{caption,tabularx,booktabs}
\usepackage{arydshln}
\usepackage{xargs}                      
\usepackage[pdftex,dvipsnames]{xcolor}  

\usepackage{mathtools}  
\usepackage{amsmath}
\usepackage{amssymb}
\usepackage{tabulary}

\usepackage{pifont}
%
%

\usepackage{algorithm}
\usepackage{algorithmic}
\usepackage{float}
\floatstyle{plain} 
\restylefloat{figure}

\usepackage{hyperref}
\usepackage{breakurl}


\newtheorem{thm}{Theorem}
\newtheorem{lem}{Lemma}

\newtheorem{rem}{Remark}
\newtheorem{ass}{Assumption}

\makeatletter
\newcounter{subthm} 
\let\savedc@thm\c@hyp

\newcommand{\normhyp}{%
  \let\c@hyp\savedc@hyp 
  \renewcommand\thehyp{\arabic{hyp}}%
} 
\makeatother

\makeatletter
\newcounter{subass} 
\let\savedc@ass\c@hyp

\makeatother

\newcommand\tagthis{\addtocounter{equation}{1}\tag{\theequation}}
\DeclareMathOperator{\Prob}{\mathbb{P}}           

\DeclareMathOperator{\Exp}{\mathbb{E}}           
\DeclareMathOperator{\R}{\mathbb{R}} 
\DeclareMathOperator{\Ocal}{\mathcal{O}} 
\newcommand{\eqdef}{\stackrel{\text{def}}{=}}

\newcommand{\ve}[2]{\langle #1, #2 \rangle}

\usepackage{array}
\usepackage{caption,tabularx,booktabs}
\usepackage{arydshln}

\usepackage{xargs}                      
\usepackage[pdftex,dvipsnames]{xcolor}  
\usepackage[colorinlistoftodos,prependcaption,textsize=tiny]{todonotes}
\newcommandx{\unsure}[2][1=]{\todo[inline,linecolor=red,backgroundcolor=red!25,bordercolor=red,#1]{#2}}
\newcommandx{\change}[2][1=]{\todo[linecolor=blue,backgroundcolor=blue!25,bordercolor=blue,#1]{#2}}
\newcommandx{\info}[2][1=]{\todo[linecolor=OliveGreen,inline,backgroundcolor=OliveGreen!25,bordercolor=OliveGreen,#1]{#2}}
\newcommandx{\improvement}[2][1=]{\todo[linecolor=Plum,inline,backgroundcolor=Plum!25,bordercolor=Plum,#1]{#2}}



\newcommand\blfootnote[1]{%
  \begingroup
  \renewcommand\thefootnote{}\footnote{#1}%
  \addtocounter{footnote}{-1}%
  \endgroup
}




\firstpageno{1}


\begin{document}

\title{New Convergence Aspects of Stochastic Gradient Algorithms}

\author{\name Lam M. Nguyen* \email LamNguyen.MLTD@ibm.com \\
       \addr IBM Research, Thomas J. Watson Research Center\\
       Yorktown Heights, NY 10598, USA
       \AND
       \name Phuong Ha Nguyen* \email phuongha.ntu@gmail.com \\
       \addr Department of Electrical and Computer Engineering \\
       University of Connecticut, Storrs, CT 06268, USA
       \AND
	   \name Peter Richt\'{a}rik \email peter.richtarik@kaust.edu.sa \\
       \addr Computer, Electrical and Math.\ Sciences and Engineering Division \\
       King Abdullah University of Science and Technology, Thuwal, KSA       
       \AND
	   \name Katya Scheinberg \email ks2375@cornell.edu \\
	   \addr School of Operations Research and Information Engineering \\
	   Cornell University, Ithaca, NY 14850, USA \\
	   \name Martin Tak\'{a}\v{c} \email Takac.MT@gmail.com \\
       \addr Department of Industrial and Systems Engineering\\
       Lehigh University, Bethlehem, PA 18015, USA  \\
      \name Marten van Dijk \email marten.van$\_$dijk@uconn.edu \\
       \addr Department of Electrical and Computer Engineering \\
       University of Connecticut, Storrs, CT 06268, USA
       }

\editor{N/A}

\maketitle


\begin{abstract}
The classical convergence analysis of SGD is carried out under the assumption that the norm of the stochastic gradient is uniformly bounded. While this might hold for some loss functions, it is violated for cases where the objective function is strongly convex.  In \cite{bottou2016optimization}, a new analysis of convergence of SGD is performed under the assumption that stochastic gradients are bounded with respect to the true gradient norm. We show that for stochastic problems arising in machine learning such bound always holds; and we also propose an alternative convergence analysis of SGD with diminishing learning rate regime. We then move on to the asynchronous parallel setting, and prove convergence of Hogwild! algorithm in the same regime in the case of diminished learning rate. 
  It is well-known that SGD converges if a sequence of learning rates $\{\eta_t\}$ satisfies $\sum_{t=0}^\infty \eta_t \rightarrow \infty$ and $\sum_{t=0}^\infty \eta^2_t < \infty$. We show the convergence of SGD for strongly convex objective function without using bounded gradient assumption when $\{\eta_t\}$ is a diminishing sequence and $\sum_{t=0}^\infty \eta_t \rightarrow \infty$. In other words, we extend the current state-of-the-art class of learning rates satisfying the convergence of SGD.
\end{abstract}

\begin{keywords}
  Stochastic Gradient Algorithms, Asynchronous Stochastic Optimization, SGD, Hogwild!, bounded gradient
\end{keywords}

\blfootnote{* Equally contributed. Corresponding author: Lam M. Nguyen}

\section{Introduction}
\label{intro}

We are interested in solving the following stochastic optimization problem
\begin{align*}
\min_{w \in \mathbb{R}^d} \left\{ F(w) = \mathbb{E} [ f(w;\xi) ] \right\}, \tagthis \label{main_prob_expected_risk}  
\end{align*}
where $\xi$ is a random variable  obeying some distribution. 

In the case of empirical risk minimization with a training set $\{(x_i,y_i)\}_{i=1}^n$, $\xi_i$ is a random variable that is defined 
 by a single random sample  $(x,y)$  pulled uniformly from the training set. Then,  by defining  $f_i(w) := f(w;\xi_{i})$, empirical risk minimization reduces to  
\begin{gather}\label{main_prob}
\min_{w \in \mathbb{R}^d} \left\{ F(w) = \frac{1}{n} \sum_{i=1}^n f_i(w) \right\}.  
\end{gather}

 Problem  \eqref{main_prob} arises frequently in supervised learning applications \cite{ESL}. For  a wide range of applications, such as linear regression and logistic regression, the objective function $F$ is strongly convex and each $f_i$, $i \in [n]$, is convex and has Lipschitz continuous gradients (with Lipschitz constant $L$).   Given a training set $\{(x_i,y_i)\}_{i=1}^n$ with $x_i \in\R^{d}, y_i\in\R$, the $\ell_2$-regularized least squares regression model, for example, is written as \eqref{main_prob} with $f_i(w)\eqdef (\langle x_i,w \rangle -y_i)^2 + \frac{\lambda}{2} \|w\|^2$. The $\ell_2$-regularized logistic regression  for binary classification is written with $f_i(w) \eqdef \log (1+\exp(-y_i \langle x_i,w \rangle)) + \frac{\lambda}{2}\|w\|^2$, $y_i\in\{-1,1\}$. It is well established by now that solving this type of problem by gradient descent (GD) \cite{nesterov2004,Nocedal2006NO} may be prohibitively expensive and stochastic gradient descent (SGD) is thus preferable.  Recently, a class of variance reduction methods \cite{SAG,SAGA,SVRG,nguyen2017sarah} has been proposed in order to reduce the computational cost. All these methods explicitly exploit the finite sum form of \eqref{main_prob} and thus they  have some disadvantages for very large scale machine learning problems and are not applicable to \eqref{main_prob_expected_risk}. 

To apply SGD to the general form \eqref{main_prob_expected_risk}  one needs to assume existence of unbiased gradient estimators. This is usually defined as follows: 
 $$\mathbb{E_\xi}[\nabla f(w;\xi)] = \nabla F(w),$$ 
 for any fixed $w$. 
Here we make an important observation: if we view \eqref{main_prob_expected_risk} not as a general stochastic problem but as the expected risk minimization problem, where $\xi$ corresponds to a random data sample pulled from a distribution,
then  \eqref{main_prob_expected_risk} has  an additional key property: for each realization of the random variable $\xi$, $f(w;\xi)$ is a convex function with Lipschitz continuous gradients.
Notice that traditional analysis of SGD for general stochastic problem of the form  \eqref{main_prob_expected_risk} does not make any assumptions on individual function realizations. 
 In this paper we derive convergence properties for SGD applied to \eqref{main_prob_expected_risk} with these additional assumptions on $f(w;\xi)$ and also extend to the case when  $f(w;\xi)$  are not necessarily convex.



Regardless of the properties of $f(w;\xi)$ we assume that $F$ in \eqref{main_prob_expected_risk}  is strongly convex. We define the (unique) optimal solution of $F$ as $w_{*}$.  
\begin{ass}[$\mu$-strongly convex]
\label{ass_stronglyconvex}
The objective function $F: \mathbb{R}^d \to \mathbb{R}$ is a $\mu$-strongly convex, i.e., there exists a constant $\mu > 0$ such that $\forall w,w' \in \mathbb{R}^d$, 
\begin{gather*}
F(w) - F(w') \geq \langle \nabla F(w'), (w - w') \rangle + \frac{\mu}{2}\|w - w'\|^2. \tagthis\label{eq:stronglyconvex_00}
\end{gather*}
\end{ass}
It is well-known in literature \cite{nesterov2004,bottou2016optimization} that Assumption \ref{ass_stronglyconvex} implies
\begin{align*}
2\mu [ F(w) - F(w_{*}) ] \leq \| \nabla F(w) \|^2 \ , \ \forall w \in \mathbb{R}^d. \tagthis\label{eq:stronglyconvex}
\end{align*}

The classical theoretical analysis of SGD assumes that the  \textit{ stochastic gradients are uniformly bounded}, i.e. there exists a finite (fixed) constant $\sigma<\infty$, such that
\begin{align}\label{bounded_grad_ass}
\mathbb{E}[\| \nabla f(w; \xi) \|^2] \leq \sigma^2 \ , \ \forall w \in \mathbb{R}^d
\end{align}
(see e.g. \cite{ShalevShwartz2007,Nemirovski2009,Hogwild,Hazan2014,Rakhlin2012}, etc.). However, this assumption is clearly false if $F$ is strongly convex. Specifically, under this assumption together with strong convexity,  $\forall w \in \mathbb{R}^d$, we have
\begin{align*}
2\mu [ F(w) - F(w_{*}) ] & \overset{\eqref{eq:stronglyconvex}}{\leq} \| \nabla F(w) \|^2 = \left\| \mathbb{E}[ \nabla f(w; \xi) ] \right\|^2 \\ &\leq \mathbb{E}[ \| \nabla f(w; \xi) \|^2 ] \overset{\eqref{bounded_grad_ass}}{\leq} \sigma^2. 
\end{align*} 
Hence, 
\begin{align*}
F(w) \leq \frac{\sigma^2}{2\mu} + F(w_{*}) \ , \ \forall w \in \mathbb{R}^d. 
\end{align*}
On the other hand strong convexity and $\nabla F(w_{*})=0$ imply
\begin{align*}
F(w) \geq {\mu} \|w-w_*\|^2+ F(w_{*}) \ , \ \forall w \in \mathbb{R}^d. 
\end{align*}
The last two inequalities are clearly in  contradiction with each other for sufficiently large  $\|w-w_*\|^2$. 

Let us consider the following example: $f_1(w)= \frac{1}{2} w^2$ and $f_2(w)=w$ with $F(w)=\frac{1}{2}(f_1(w)+f_2(w))$. Note that $F$ is strongly convex, while individual realizations are not necessarily so.  Let $w_0=0$, for any number $t\geq 0$, with probability $\frac{1}{2^t}$ the steps of SGD algorithm for all $i< t$ are $w_{i+1}=w_i-\eta_i$. This implies that
$w_t=-\sum_{i=1}^t \eta_i$ and  since $\sum_{i=1}^\infty\eta_i =\infty$ then $|w_t|$ can be arbitrarily large for large enough $t$ with probability $\frac{1}{2^t}$. Noting that for this example, $\mathbb{E}[\| \nabla f(w_t; \xi) \|^2] =\frac{1}{2}w^2_t+\frac{1}{2}$, we see that $\mathbb{E}[\| \nabla f(w_t; \xi) \|^2]$ can also be arbitrarily large. 





Recently, in the review paper \cite{bottou2016optimization}, convergence of SGD for general stochastic optimization problem was analyzed under the following assumption:  there exist constants $M$ and $N$ such that $\mathbb{E}[\| \nabla f(w_t; \xi_t) \|^2] \leq M \| \nabla F(w_t) \|^2 + N$, where $w_t$, $t \geq 0$, are generated by the SGD algorithm. This assumption does not contradict strong convexity, however, in general, constants $M$ and $N$ are unknown, while $M$ is used to determine the learning rate $\eta_t$, see \cite{bottou2016optimization}.  In addition, the rate of convergence of the SGD algorithm depends  on $M$ and $N$. In this paper we show that under the smoothness assumption on individual realizations  $f(w,\xi)$ it is possible to derive the bound $\mathbb{E}[\| \nabla f(w; \xi) \|^2] \leq M_0 [F(w) - F(w_*)] + N$  with specific values of $M_0$, and $N$ for $\forall w \in \mathbb{R}^d$, which in turn  implies the bound $\mathbb{E}[\| \nabla f(w; \xi) \|^2] \leq M \| \nabla F(w) \|^2 + N$ with specific $M$, by strong convexity of $F$. We also note that, in \cite{Bach_NIPS2011}, the convergence of SGD without bounded gradient assumption is studied. 
We then use the  new framework for the convergence analysis of SGD to analyze an asynchronous stochastic gradient method.

In~\cite{Hogwild}, an asynchronous stochastic optimization method called Hogwild! was proposed.  Hogwild! algorithm is a parallel version of SGD, where each processor applies SGD steps independently of the other processors to the solution $w$ which is  shared by all processors. Thus, each processor computes a stochastic gradient and updates $w$ without "locking" the memory containing $w$, meaning that multiple processors are able to update $w$ at the same time.  This approach leads to much better scaling of parallel SGD algorithm than a synchoronous version, but the analysis of this method is more complex.   In ~\cite{Hogwild,ManiaPanPapailiopoulosEtAl2015,DeSaZhangOlukotunEtAl2015} various variants of   Hogwild! with a fixed step size are analyzed under the assumption that the gradients are bounded as in  (\ref{bounded_grad_ass}). In this paper, we  extend our analysis of SGD to provide analysis  of Hogwild! with diminishing step sizes and without the   assumption on bounded gradients. 

In a recent technical report \cite{Leblond2018}  Hogwild! with fixed step size is analyzed without the bounded gradient assumption. We note that SGD with fixed step size only converges  to a neighborhood of the optimal solution, while by analyzing the diminishing step size variant we are able to show convergence to the \textit{optimal solution} with probability one. Both in \cite{Leblond2018}  and in this paper, the version of Hogwild! with inconsistent reads and writes is considered.

It is well-known that SGD will converge if a sequence of learning rates $\{\eta_t\}$ satisfies the following conditions (1) $\sum_{t=0}^\infty \eta_t \rightarrow \infty$ and (2) $\sum_{t=0}^\infty \eta^2_t < \infty$. As an important contribution of this paper, we show the convergence of SGD for strongly convex objective function without using bounded gradient assumption when $\{\eta_t\}$ is a diminishing sequence and $\sum_{t=0}^\infty \eta_t \rightarrow \infty$.  In~\cite{Bach_NIPS2011}, the authors also proved the convergence of SGD for $\{\eta_t=\Ocal(1/t^q)\}, 0<q\leq 1,$ without using bounded gradient assumption and the second condition. Compared to~\cite{Bach_NIPS2011}, we prove the convergence of SGD for $\{\eta_t=\Ocal(1/t^q)\}$ which is $1/\mu$ times larger and our proposed class of learning rates satisfying the convergence of SGD is larger. Our proposed class of learning rates satisfying the convergence of SGD is larger than the current state-of-the art one.      

We would like to highlight that this paper is originally from \cite{Nguyen2018_sgdhogwild} (Proceedings of the 35th International Conference on Machine Learning, 2018) but it presents a substantial extension by providing many new results for SGD and Hogwild!.

\subsection{Contribution}

We provide a new framework for the analysis of stochastic gradient algorithms in the strongly convex case under the condition of Lipschitz continuity of the individual function realizations, but {\bf without requiring any bounds on the stochastic gradients}.
Within this framework we have the following contributions: 
\begin{itemize}
\item 
We prove the almost sure (w.p.1) convergence of SGD with diminishing step size. Our analysis provides a larger bound on the possible initial  step size when compared to any previous analysis of convergence in expectation for SGD. 

\item 
We introduce a general recurrence for vector updates which has as its special cases (a) the Hogwild! algorithm with diminishing step sizes, where each update involves all non-zero entries of the computed gradient, and (b) a position-based updating algorithm where each update corresponds to only one uniformly selected non-zero entry of the computed gradient.

\item 
We analyze this general recurrence under inconsistent vector reads from and vector writes to shared memory (where individual vector entry reads and writes are atomic in that they cannot be interrupted by writes to the same entry) assuming that there exists a delay $\tau$ such that during the $(t+1)$-th iteration a gradient of a read vector $w$ is computed which includes the aggregate of all the updates up to and including those made during the $(t-\tau)$-th iteration. In other words, $\tau$ controls to what extent past updates influence the shared memory.
\begin{itemize}
\item 
Our upper bound for the expected convergence rate is $O(1/t)$, and its precise expression allows comparison of algorithms (a) and (b) described above.
\item
For SGD we can improve this upper bound by a factor of 2 and also show that its initial step size can be larger.

\item
We show that $\tau$ can be a function of $t$ as large as 
$\sqrt{ (t/ln t)(1-1/ln t) }$ without affecting the asymptotic behavior of the upper bound; we also determine a constant $T_0$ with the property that, for $t\geq T_0$,  higher order terms containing parameter $\tau$ are smaller than the leading $O(1/t)$ term. We give intuition explaining why the expected convergence rate is not more affected by $\tau$. Our experiments confirm our analysis.
\item 
%
We determine a constant $T_1$ with the property that, for $t\geq T_1$,  the higher order term containing parameter $\|w_0-w_*\|^2$ is smaller than the leading $O(1/t)$ term.
\end{itemize}
\item 
%
All the above contributions generalize to the setting where we do not need to assume that the component functions $f(w;\xi)$ are convex in $w$.
\end{itemize}

Compared to \cite{Nguyen2018_sgdhogwild}, we have following new results:
\begin{itemize}
\item 

We prove the almost sure (w.p.1) convergence of Hogwild! with a diminishing sequence of learning rates $\{\eta_t\}$. 

\item 
We prove the convergence of SGD for diminishing sequences of learning rates $\{\eta_t\}$ with condition $\sum_{t=0}^{\infty} \eta_t \rightarrow \infty$. In other words, we extend the current state-of-the-art class of learning rates satisfying the convergence of SGD.

\item We prove the convergence of SGD for our extended class of learning rates in batch model. 
\end{itemize}




\subsection{Organization}

We analyse the convergence rate of SGD  in Section \ref{analysis} and introduce the general recursion and its analysis in Section \ref{general}. Section~\ref{sec:large_stepsize_convergence} studies the convergence of SGD for our extended class of learning rates. Experiments are reported in Section \ref{sec_experiments}.
\section{New Framework for Convergence Analysis of SGD}
\label{analysis}

We introduce SGD algorithm in Algorithm \ref{sgd_algorithm}. 
\begin{algorithm}[h]
   \caption{Stochastic Gradient Descent (SGD) Method}
   \label{sgd_algorithm}
\begin{algorithmic}
   \STATE {\bfseries Initialize:} $w_0$
   \STATE {\bfseries Iterate:}
   \FOR{$t=0,1,2,\dots$}
  \STATE Choose a step size (i.e., learning rate) $\eta_t>0$. 
  \STATE Generate a random variable $\xi_t$.
  \STATE Compute a stochastic gradient $\nabla f(w_{t};\xi_{t}).$
   \STATE Update the new iterate $w_{t+1} = w_{t} - \eta_t \nabla f(w_{t};\xi_{t})$.
   \ENDFOR
\end{algorithmic}
\end{algorithm}
%

The sequence of random variables $\{\xi_t\}_{t \geq 0}$ is assumed to  be i.i.d.\footnote{Independent and identically distributed.}
Let us introduce our key assumption that each realization $\nabla f(w;\xi)$ is an $L$-smooth function. 


\begin{ass}[$L$-smooth]
\label{ass_smooth}
$f(w;\xi)$ is $L$-smooth for every realization of $\xi$, i.e., there exists a constant $L > 0$ such that, $\forall w,w' \in \mathbb{R}^d$, 
\begin{align*}
\| \nabla f(w;\xi) - \nabla f(w';\xi) \| \leq L \| w - w' \|. \tagthis\label{eq:Lsmooth_basic}
\end{align*} 
\end{ass}

Assumption \ref{ass_smooth} implies that $F$ is also $L$-smooth. Then, by a property of $L$-smooth functions  (in \cite{nesterov2004}), we have, $\forall w, w' \in \mathbb{R}^d$, 
\begin{align*}
F(w) &\leq F(w') + \langle \nabla F(w'),(w-w') \rangle + \frac{L}{2}\|w-w'\|^2. \tagthis\label{eq:Lsmooth}
\end{align*}

The following additional convexity assumption can be made, as it holds for many problems arising in machine learning.

\begin{ass}\label{ass_convex}
$f(w;\xi)$ is convex for every realization of $\xi$, i.e., $\forall w,w' \in \mathbb{R}^d$, 
\begin{gather*}
f(w;\xi)  - f(w';\xi) \geq \langle \nabla f(w';\xi),(w - w') \rangle.
\end{gather*}
\end{ass}

We first derive our analysis under Assumptions  \ref{ass_smooth}, and \ref{ass_convex} and then we derive weaker results under only
Assumption  \ref{ass_smooth}. 

\subsection{Convergence With Probability One}

As discussed in the introduction, under Assumptions \ref{ass_smooth} and \ref{ass_convex} we can now derive a bound on $\mathbb{E}\|\nabla f(w; \xi)\|^2$. 

%
\begin{lem}\label{lem_bounded_secondmoment_04}
Let Assumptions \ref{ass_smooth} and \ref{ass_convex} hold. Then, for $\forall w \in \mathbb{R}^d$, 
\begin{gather}
\mathbb{E}[\|\nabla f(w; \xi)\|^2] \leq  4 L [ F(w) - F(w_{*}) ] + N,
\label{afsfawfwaefwea} 
\end{gather}
where $N = 2 \mathbb{E}[ \|\nabla f(w_{*}; \xi)\|^2 ]$; $\xi$ is a random variable, and $w_{*} = \arg \min_w F(w)$. 
\end{lem}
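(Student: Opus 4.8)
The plan is to exploit the standard consequence of $L$-smoothness together with convexity of each realization $f(w;\xi)$, which gives a quadratic lower bound on $f(w;\xi) - f(w_*;\xi)$ in terms of the gradient at $w$. Specifically, for an $L$-smooth convex function $g$, one has the well-known inequality $\|\nabla g(w) - \nabla g(w')\|^2 \leq 2L[g(w) - g(w') - \langle \nabla g(w'), w-w'\rangle]$. I would apply this to $g(\cdot) = f(\cdot;\xi)$ with $w' = w_*$, obtaining
\begin{align*}
\|\nabla f(w;\xi) - \nabla f(w_*;\xi)\|^2 \leq 2L\big[ f(w;\xi) - f(w_*;\xi) - \langle \nabla f(w_*;\xi), w - w_*\rangle \big].
\end{align*}

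Next I would take expectation over $\xi$. The key point is that $\mathbb{E}[\nabla f(w_*;\xi)] = \nabla F(w_*) = 0$, so the cross term $\mathbb{E}[\langle \nabla f(w_*;\xi), w-w_*\rangle]$ vanishes for any fixed $w$, and $\mathbb{E}[f(w;\xi) - f(w_*;\xi)] = F(w) - F(w_*)$. This yields
\begin{align*}
\mathbb{E}\big[\|\nabla f(w;\xi) - \nabla f(w_*;\xi)\|^2\big] \leq 2L[F(w) - F(w_*)].
\end{align*}
Finally I would relate the left-hand side to $\mathbb{E}\|\nabla f(w;\xi)\|^2$ via the elementary inequality $\|a\|^2 \leq 2\|a-b\|^2 + 2\|b\|^2$ with $a = \nabla f(w;\xi)$ and $b = \nabla f(w_*;\xi)$, so that $\mathbb{E}\|\nabla f(w;\xi)\|^2 \leq 2\,\mathbb{E}\|\nabla f(w;\xi) - \nabla f(w_*;\xi)\|^2 + 2\,\mathbb{E}\|\nabla f(w_*;\xi)\|^2 \leq 4L[F(w)-F(w_*)] + N$ with $N = 2\mathbb{E}\|\nabla f(w_*;\xi)\|^2$, which is exactly the claim.

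The steps are all routine; the only thing to be careful about is justifying the vanishing cross term, which requires that $w$ is fixed (deterministic) so that the expectation passes through the inner product, and invoking the unbiasedness $\mathbb{E}_\xi[\nabla f(w_*;\xi)] = \nabla F(w_*) = 0$. There is no real obstacle — the main "content" is recalling the co-coercivity-type bound for smooth convex functions and then being disciplined about which quantities are random. I would cite \cite{nesterov2004} for the smooth-convex inequality used in the first step.
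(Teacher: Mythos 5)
Your proof is correct and follows essentially the same route as the paper: the key intermediate bound $\mathbb{E}[\|\nabla f(w;\xi)-\nabla f(w_*;\xi)\|^2]\leq 2L[F(w)-F(w_*)]$ is exactly the paper's Lemma~\ref{lem_bound_diff_grad} (which the paper cites without proof), and the final step via $\|a\|^2\leq 2\|a-b\|^2+2\|b\|^2$ is identical. The only difference is that you supply the co-coercivity argument for that intermediate bound inline rather than invoking it as a black box.
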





Using Lemma \ref{lem_bounded_secondmoment_04} and Super Martingale Convergence Theorem \cite{BertsekasSurvey} (Lemma \ref{prop_supermartingale} in Appendix~\ref{useful}), we can provide the sufficient condition for almost sure convergence of Algorithm \ref{sgd_algorithm} in the strongly convex case without assuming any bounded gradients. 

%

\begin{thm}[Sufficient conditions for almost sure convergence]\label{thm_general_02_new_02}
Let Assumptions \ref{ass_stronglyconvex}, \ref{ass_smooth} and \ref{ass_convex} hold. Consider Algorithm \ref{sgd_algorithm} with a stepsize sequence such that
\begin{align*}
0 < \eta_t \leq \frac{1}{2 L} \ , \ \sum_{t=0}^{\infty} \eta_t = \infty \ \text{and} \ \sum_{t=0}^{\infty} \eta_t^2 < \infty. 
\end{align*}
Then, the following holds w.p.1 (almost surely)
\begin{align*}
\| w_{t} - w_{*} \|^2 \to 0. 
\end{align*}

\end{thm}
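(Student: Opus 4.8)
The plan is to follow the classical Robbins--Siegmund route: show that $\|w_t-w_*\|^2$ is, up to a summable perturbation, a nonnegative supermartingale, and then use the divergence of $\sum_t\eta_t$ to pin its limit at zero. Let $\mathcal{F}_t$ be the $\sigma$-algebra generated by $\xi_0,\dots,\xi_{t-1}$, so that $w_t$ is $\mathcal{F}_t$-measurable and $\mathbb{E}[\nabla f(w_t;\xi_t)\mid\mathcal{F}_t]=\nabla F(w_t)$ by the i.i.d.\ and unbiasedness assumptions. Expanding the SGD update gives
\[
\|w_{t+1}-w_*\|^2 = \|w_t - w_*\|^2 - 2\eta_t \langle \nabla f(w_t;\xi_t),\, w_t - w_*\rangle + \eta_t^2 \|\nabla f(w_t;\xi_t)\|^2,
\]
and taking conditional expectation given $\mathcal{F}_t$ yields
\[
\mathbb{E}[\|w_{t+1}-w_*\|^2 \mid \mathcal{F}_t] = \|w_t - w_*\|^2 - 2\eta_t \langle \nabla F(w_t),\, w_t - w_*\rangle + \eta_t^2\, \mathbb{E}[\|\nabla f(w_t;\xi_t)\|^2 \mid \mathcal{F}_t].
\]

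Next I would bound the last two terms. For the inner product, Assumption \ref{ass_stronglyconvex} applied at $w=w_*$, $w'=w_t$ (together with $\nabla F(w_*)=0$) gives $\langle \nabla F(w_t),\, w_t - w_*\rangle \ge F(w_t)-F(w_*) + \tfrac{\mu}{2}\|w_t-w_*\|^2$. For the second moment, Lemma \ref{lem_bounded_secondmoment_04} gives $\mathbb{E}[\|\nabla f(w_t;\xi_t)\|^2\mid\mathcal{F}_t] \le 4L[F(w_t)-F(w_*)] + N$. Substituting both bounds,
\[
\mathbb{E}[\|w_{t+1}-w_*\|^2 \mid \mathcal{F}_t] \le (1-\mu\eta_t)\|w_t - w_*\|^2 - 2\eta_t(1-2L\eta_t)[F(w_t)-F(w_*)] + N\eta_t^2.
\]
The decisive point is that the restriction $\eta_t\le \tfrac{1}{2L}$ makes $1-2L\eta_t\ge 0$; since $F(w_t)-F(w_*)\ge 0$, the middle term is nonpositive and may be discarded. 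Writing $Y_t:=\|w_t-w_*\|^2\ge 0$ and $(1-\mu\eta_t)Y_t = Y_t-\mu\eta_t Y_t$, this reads
\[
\mathbb{E}[Y_{t+1}\mid\mathcal{F}_t] \le Y_t - \mu\eta_t Y_t + N\eta_t^2 ,
\]
and since $\sum_t N\eta_t^2 = N\sum_t\eta_t^2<\infty$, the Super Martingale Convergence Theorem (Lemma \ref{prop_supermartingale}) applies: on an event of probability one, $Y_t$ converges to some limit $Y_\infty\ge 0$ and simultaneously $\sum_{t=0}^\infty \mu\eta_t Y_t<\infty$.

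Finally I would identify $Y_\infty=0$. Work on the probability-one event above. If $Y_\infty>0$, then $Y_t\ge Y_\infty/2$ for all sufficiently large $t$, hence $\sum_t \eta_t Y_t \ge (Y_\infty/2)\sum_t\eta_t=\infty$ because $\sum_t\eta_t=\infty$, contradicting the summability $\sum_t\eta_t Y_t<\infty$. Therefore $Y_\infty=0$ almost surely, i.e.\ $\|w_t-w_*\|^2\to 0$ w.p.1. I do not expect a real obstacle here: Lemma \ref{lem_bounded_secondmoment_04} has already done the essential work of replacing the (false) uniform-gradient bound by a bound in terms of the optimality gap, and everything else is routine. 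The only point needing care is that both conclusions of the supermartingale theorem --- convergence of $Y_t$ and summability of $\sum_t\eta_t Y_t$ --- must be asserted on the \emph{same} probability-one event, so that the concluding ``$\sum_t\eta_t=\infty$ forces $Y_\infty=0$'' argument is genuinely pathwise; this is exactly the form in which the cited theorem is stated.
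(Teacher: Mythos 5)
Your proposal is correct and follows essentially the same route as the paper's own proof: the same one-step expansion, the same use of strong convexity for the inner product, the same invocation of Lemma \ref{lem_bounded_secondmoment_04} to bound the conditional second moment, the same discarding of the $-2\eta_t(1-2L\eta_t)[F(w_t)-F(w_*)]$ term via $\eta_t\le\frac{1}{2L}$, and the same combination of the supermartingale convergence lemma with $\sum_t\eta_t=\infty$ to force the limit to zero. Your closing remark about asserting both conclusions on the same probability-one event is a fair point of care that the paper handles implicitly.
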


Note that the classical SGD proposed in \cite{RM1951} has learning rate satisfying conditions
\begin{align*}
\sum_{t=0}^{\infty} \eta_t = \infty \ \text{and} \ \sum_{t=0}^{\infty} \eta_t^2 < \infty
\end{align*}
However, the original analysis is performed under the bounded gradient assumption, as in \eqref{bounded_grad_ass}. 
In Theorem \ref{thm_general_02_new_02}, on the other hand, we do not use this assumption, but instead assume Lipschitz smoothness 
and convexity of the function realizations, which does not contradict the strong convexity of $F(w)$. 

The following result establishes a sublinear convergence rate of SGD.   

\begin{thm}\label{thm_res_sublinear_new_02}
Let Assumptions \ref{ass_stronglyconvex}, \ref{ass_smooth} and \ref{ass_convex} hold. Let $E = \frac{2\alpha L}{\mu}$ with $\alpha=2$. Consider Algorithm \ref{sgd_algorithm} with a stepsize sequence such that $\eta_t = \frac{\alpha}{\mu(t+E)} \leq \eta_0=\frac{1}{2L}$. Then, 
$$ \mathbb{E}[\|w_{t} - w_{*}\|^2] \leq \frac{4\alpha^2 N}{\mu^2} 
\frac{1 }{(t-T+E)} $$
for 
$$t\geq T =\frac{4L}{\mu}\max \{ \frac{L\mu}{N} \|w_{0} - w_{*}\|^2, 1\} - \frac{4L}{\mu},$$
where $N = 2 \mathbb{E}[ \|\nabla f(w_{*}; \xi)\|^2 ]$ and $w_{*} = \arg \min_w F(w)$. 
\end{thm}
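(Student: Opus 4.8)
\emph{Proof plan.} The idea is to reduce the statement to a deterministic scalar recursion for $Y_t := \mathbb{E}[\|w_t-w_*\|^2]$ and then solve that recursion. For the recursion I would start from the update $w_{t+1}=w_t-\eta_t\nabla f(w_t;\xi_t)$, expand $\|w_{t+1}-w_*\|^2$, and take the expectation conditional on $w_t$. Using $\mathbb{E}[\nabla f(w_t;\xi_t)\mid w_t]=\nabla F(w_t)$ (independence of $\xi_t$), the strong convexity estimate $\langle\nabla F(w_t),w_t-w_*\rangle\ge F(w_t)-F(w_*)+\tfrac{\mu}{2}\|w_t-w_*\|^2$ from Assumption~\ref{ass_stronglyconvex}, and the second-moment bound $\mathbb{E}[\|\nabla f(w_t;\xi_t)\|^2\mid w_t]\le 4L[F(w_t)-F(w_*)]+N$ from Lemma~\ref{lem_bounded_secondmoment_04}, I obtain
\[
\mathbb{E}[\|w_{t+1}-w_*\|^2\mid w_t]\le(1-\mu\eta_t)\|w_t-w_*\|^2-2\eta_t(1-2L\eta_t)[F(w_t)-F(w_*)]+\eta_t^2N .
\]
Since $\eta_t\le\eta_0=\tfrac{1}{2L}$ makes $1-2L\eta_t\ge 0$ while $F(w_t)-F(w_*)\ge 0$, the middle term is nonpositive and may be discarded; taking total expectations gives $Y_{t+1}\le(1-\mu\eta_t)Y_t+\eta_t^2N$. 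With $\eta_t=\tfrac{\alpha}{\mu(t+E)}$, $\alpha=2$ and $E=\tfrac{2\alpha L}{\mu}=\tfrac{4L}{\mu}$ (which is exactly what forces $\eta_0=\tfrac{1}{2L}$), this becomes $Y_{t+1}\le(1-\tfrac{2}{t+E})Y_t+\tfrac{4N}{\mu^2(t+E)^2}$.

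To solve the recursion I would unroll it, using that the coefficient products telescope (legitimate since $E\ge 4>2$), $\prod_{i=0}^{t-1}(1-\tfrac{2}{i+E})=\tfrac{(E-2)(E-1)}{(t+E-2)(t+E-1)}$, which gives
\[
Y_t\le\frac{(E-2)(E-1)}{(t+E-2)(t+E-1)}\|w_0-w_*\|^2+\frac{4N}{\mu^2}\cdot\frac{1}{(t+E-2)(t+E-1)}\sum_{s=0}^{t-1}\frac{s+E-1}{s+E} .
\]
The first (initial-condition) term decays like $1/t^2$ — this quadratic decay is precisely what choosing $\alpha=2$ rather than $\alpha=1$ buys — and the second (noise) term, since $\sum_{s=0}^{t-1}\tfrac{s+E-1}{s+E}\le t$, is $O(1/t)$ and is in fact $\le\tfrac{4N}{\mu^2(t-T+E)}$ for $t\ge T$ (an elementary inequality that holds because $E\ge 3$). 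It then remains to bound the initial-condition term by $\tfrac{12N}{\mu^2(t-T+E)}$ for $t\ge T$; adding the two bounds produces the claimed $\tfrac{16N}{\mu^2(t-T+E)}$, with the factor $16=12+4$ split between the two pieces. For the initial-condition term one distinguishes the cases $\|w_0-w_*\|^2\le N/(L\mu)$ and $\|w_0-w_*\|^2> N/(L\mu)$ — exactly the dichotomy encoded by $\max\{\cdot,1\}$ in the definition of $T$, which makes $T+E=E\max\{\tfrac{L\mu}{N}\|w_0-w_*\|^2,1\}$ and hence $E^2\|w_0-w_*\|^2$ equal to $ED$ resp.\ $D(T+E)$ with $D=\tfrac{4N}{\mu^2}$ — and in both cases the needed estimate reduces to the polynomial inequality $(T+E)(t-T+E)\le 3(t+E-2)(t+E-1)$, valid for $t\ge T$ by monotonicity in $t$ together with $E\ge 4$.

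The step I expect to be the main obstacle is this last one: calibrating the burn-in time $T$ so that it comes out to exactly the stated expression. A naive induction on $Y_t\le\tfrac{16N}{\mu^2(t-T+E)}$ with base case $Y_T\le\tfrac{16N}{\mu^2E}$ does not close — its inductive step would force $T$ to be at most a fixed multiple of $E$, which fails when $\|w_0-w_*\|^2$ is large — so one genuinely has to exploit the faster $1/t^2$ decay of the homogeneous part to certify that $Y_T$ is already well below $\tfrac{16N}{\mu^2E}$: time $T$ is designed to be precisely the point at which the $1/t^2$ initial-condition term has dropped to the level of the $1/t$ noise term. After that, the constants must be tracked carefully through the two polynomial inequalities above. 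The case $\|w_0-w_*\|^2\le N/(L\mu)$, where $T=0$, follows from the same estimates with no shift.
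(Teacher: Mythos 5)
Your proposal is correct, and its first half coincides with the paper's: both derive the one-step recursion $Y_{t+1}\le(1-\mu\eta_t)Y_t+\eta_t^2N$ exactly as in \eqref{main_ineq_sgd_new02}, using strong convexity, Lemma \ref{lem_bounded_secondmoment_04}, and $\eta_t\le\frac{1}{2L}$ to discard the nonpositive term $-2\eta_t(1-2L\eta_t)[F(w_t)-F(w_*)]$. Where you genuinely diverge is in solving the recursion. The paper proves by induction the single estimate $\mathbb{E}[\|w_t-w_*\|^2]\le\frac{NG}{\mu^2(t+E)}$ with $G=\max\{\frac{E\mu^2}{N}\|w_0-w_*\|^2,\,\frac{\alpha^2}{\alpha-1}\}$ (see \eqref{eqGGG}), observes that at $t=T$ this bound forces $\frac{L\mu}{N}\mathbb{E}[\|w_T-w_*\|^2]\le1$, and then re-runs the identical induction with $w_T$ as the new starting point so that $G$ collapses to $4$; this restart is precisely how the paper evades the obstruction you correctly identify (a one-shot induction on the final bound cannot absorb a large $\|w_0-w_*\|^2$). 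You instead unroll the recursion exactly --- the telescoping product $\prod_{i=0}^{t-1}(1-\frac{2}{i+E})=\frac{(E-2)(E-1)}{(t+E-2)(t+E-1)}$ and the resulting noise sum are both right --- split off the homogeneous part, and exploit its $O(1/t^2)$ decay to locate $T$ as the crossover time. The two polynomial inequalities you reduce to, $t(t-T+E)\le(t+E-2)(t+E-1)$ and $(T+E)(t-T+E)\le3(t+E-2)(t+E-1)$, do hold for $t\ge T$ under $E=\frac{4L}{\mu}\ge4$: the left side of the second minus nothing is dominated at $t=T$, where with $v=T+E\ge E$ it reduces to $3(v-2)(v-1)\ge vE$, which follows from $2v^2-9v+6\ge0$ for $v\ge4$ since $vE\le v^2$. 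As for what each route buys: the paper's induction-plus-restart is shorter and in fact delivers the sharper constant $\frac{4N}{\mu^2}$ in place of the stated $\frac{4\alpha^2N}{\mu^2}=\frac{16N}{\mu^2}$, whereas your unrolling (in the spirit of Lemma \ref{lemma:hogwild_recursive_form}, which the paper reserves for the Hogwild! analysis) makes transparent \emph{why} $T$ is exactly the time at which the $1/t^2$ initial-condition term falls to the level of the $1/t$ noise term, at the cost of the $16=12+4$ split and heavier constant bookkeeping.
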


\subsection{Convergence Analysis without Convexity}

In this section, we provide the analysis of Algorithm \ref{sgd_algorithm} without using Assumption \ref{ass_convex}, that is, $f(w;\xi)$ is not necessarily convex. We  still do not need to impose the bounded stochastic gradient assumption, since we can derive an analogue of Lemma \ref{lem_bounded_secondmoment_04}, albeit with worse constant in the bound. 
\begin{lem}\label{lem_bounded_secondmoment_04_new}
Let Assumptions \ref{ass_stronglyconvex} and \ref{ass_smooth} hold. Then, for $\forall w \in \mathbb{R}^d$, 
\begin{gather}
\mathbb{E}[\|\nabla f(w; \xi)\|^2] \leq  4L \kappa [ F(w) - F(w_{*}) ] + N,
\label{afsfawfwaefwea_new} 
\end{gather}
where $\kappa = \frac{L}{\mu}$ and $N = 2 \mathbb{E}[ \|\nabla f(w_{*}; \xi)\|^2 ]$; $\xi$ is a random variable, and $w_{*} = \arg \min_w F(w)$.
\end{lem}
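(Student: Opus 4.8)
The plan is to bypass the convexity of the individual realizations (which was used in Lemma \ref{lem_bounded_secondmoment_04}) and instead extract everything from $L$-smoothness together with strong convexity of $F$, at the cost of an extra factor $\kappa = L/\mu$. The starting point is the elementary inequality $\|a+b\|^2 \le 2\|a\|^2 + 2\|b\|^2$ applied to $a = \nabla f(w;\xi) - \nabla f(w_*;\xi)$ and $b = \nabla f(w_*;\xi)$, which gives
\begin{align*}
\|\nabla f(w;\xi)\|^2 \le 2\|\nabla f(w;\xi) - \nabla f(w_*;\xi)\|^2 + 2\|\nabla f(w_*;\xi)\|^2 .
\end{align*}
By Assumption \ref{ass_smooth} (applied with the two points $w$ and $w_*$, for each fixed realization $\xi$), the first term is bounded by $2L^2\|w - w_*\|^2$. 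Taking expectation over $\xi$ then yields
\begin{align*}
\mathbb{E}[\|\nabla f(w;\xi)\|^2] \le 2L^2 \|w - w_*\|^2 + N, \qquad N = 2\mathbb{E}[\|\nabla f(w_*;\xi)\|^2].
\end{align*}

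The second step converts $\|w - w_*\|^2$ into a function-value gap. Applying Assumption \ref{ass_stronglyconvex}, i.e.\ \eqref{eq:stronglyconvex_00}, with $w' = w_*$ and using $\nabla F(w_*) = 0$, we obtain $F(w) - F(w_*) \ge \frac{\mu}{2}\|w - w_*\|^2$, hence $\|w - w_*\|^2 \le \frac{2}{\mu}[F(w) - F(w_*)]$. Substituting this into the previous display gives
\begin{align*}
\mathbb{E}[\|\nabla f(w;\xi)\|^2] \le \frac{4L^2}{\mu}[F(w) - F(w_*)] + N = 4L\kappa [F(w) - F(w_*)] + N,
\end{align*}
which is exactly \eqref{afsfawfwaefwea_new}.

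I do not expect a genuine obstacle here; the proof is short and self-contained. The only thing worth being careful about is that $L$-smoothness alone does not give the ``co-coercivity'' type inequality $\|\nabla f(w;\xi) - \nabla f(w_*;\xi)\|^2 \le 2L[f(w;\xi) - f(w_*;\xi) - \langle \nabla f(w_*;\xi), w - w_*\rangle]$ that underlies the sharper constant $4L$ in Lemma \ref{lem_bounded_secondmoment_04}; that co-coercivity step requires convexity of $f(\cdot;\xi)$. Without it we must route through $\|w-w_*\|^2$, and strong convexity of $F$ is precisely what lets us pay for that detour, producing the $\kappa$ factor. It is worth remarking that this bound, combined with strong convexity as in \eqref{eq:stronglyconvex}, also implies a bound of the form $\mathbb{E}[\|\nabla f(w;\xi)\|^2] \le M\|\nabla F(w)\|^2 + N$, so all downstream convergence results that relied on Lemma \ref{lem_bounded_secondmoment_04} go through verbatim after replacing $4L$ by $4L\kappa$ in the relevant constants.
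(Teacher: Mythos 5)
Your proof is correct and follows essentially the same route as the paper's: the elementary bound $\|a\|^2\leq 2\|a-b\|^2+2\|b\|^2$ with $a=\nabla f(w;\xi)$, $b=\nabla f(w_*;\xi)$, then $L$-smoothness of each realization to get $L^2\|w-w_*\|^2$, then strong convexity of $F$ (with $\nabla F(w_*)=0$) to convert that into $\frac{2}{\mu}[F(w)-F(w_*)]$. Your closing remarks about why the co-coercivity step is unavailable here and where the $\kappa$ factor comes from match the paper's discussion as well.
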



Based on the proofs of Theorems \ref{thm_general_02_new_02} and \ref{thm_res_sublinear_new_02}, we can easily have the following two results (Theorems \ref{thm_general_02_new_03} and  \ref{thm_res_sublinear_new_03}). 

\begin{thm}[Sufficient conditions for almost sure convergence]\label{thm_general_02_new_03}
 Let Assumptions \ref{ass_stronglyconvex} and \ref{ass_smooth} hold. Then, we can conclude the statement of Theorem \ref{thm_general_02_new_02} with the definition of the step size replaced by $0 < \eta_t \leq \frac{1}{2L \kappa}$ with $\kappa = \frac{L}{\mu}$.
%
\end{thm}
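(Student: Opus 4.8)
The plan is to mimic the proof of Theorem \ref{thm_general_02_new_02} line for line, replacing the use of Lemma \ref{lem_bounded_secondmoment_04} by its non-convex analogue Lemma \ref{lem_bounded_secondmoment_04_new}. First I would start from the SGD update $w_{t+1} = w_t - \eta_t \nabla f(w_t;\xi_t)$, expand $\|w_{t+1}-w_*\|^2$, and take the conditional expectation given the history $\mathcal{F}_t$ (generated by $w_0,\xi_0,\dots,\xi_{t-1}$), using the unbiasedness $\mathbb{E}[\nabla f(w_t;\xi_t)\mid\mathcal{F}_t]=\nabla F(w_t)$. This gives
\begin{align*}
\mathbb{E}[\|w_{t+1}-w_*\|^2\mid\mathcal{F}_t] = \|w_t-w_*\|^2 - 2\eta_t\langle\nabla F(w_t),w_t-w_*\rangle + \eta_t^2\,\mathbb{E}[\|\nabla f(w_t;\xi_t)\|^2\mid\mathcal{F}_t].
\end{align*}
Now I would bound the second-moment term by Lemma \ref{lem_bounded_secondmoment_04_new}, namely $\mathbb{E}[\|\nabla f(w_t;\xi_t)\|^2\mid\mathcal{F}_t]\le 4L\kappa[F(w_t)-F(w_*)]+N$, and use strong convexity in the form $\langle\nabla F(w_t),w_t-w_*\rangle\ge F(w_t)-F(w_*)+\frac{\mu}{2}\|w_t-w_*\|^2$ (from \eqref{eq:stronglyconvex_00} with the roles of $w,w'$ chosen appropriately) to lower-bound the inner-product term.

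The second step is to observe that the coefficient multiplying $[F(w_t)-F(w_*)]$ becomes $-2\eta_t + 4L\kappa\eta_t^2 = -2\eta_t(1-2L\kappa\eta_t)$, which is $\le 0$ precisely when $\eta_t\le\frac{1}{2L\kappa}$ — this is exactly the modified step-size restriction in the statement, and it is the only place where the constant changes relative to Theorem \ref{thm_general_02_new_02}. With that term discarded (it is non-positive and $F(w_t)-F(w_*)\ge 0$), I am left with
\begin{align*}
\mathbb{E}[\|w_{t+1}-w_*\|^2\mid\mathcal{F}_t] \le (1-\mu\eta_t)\|w_t-w_*\|^2 + \eta_t^2 N.
\end{align*}
The third step is to invoke the Super Martingale Convergence Theorem (Lemma \ref{prop_supermartingale} in Appendix~\ref{useful}) with the nonnegative sequences identified as $Y_t=\|w_t-w_*\|^2$, $Z_t=\mu\eta_t\|w_t-w_*\|^2$, and $W_t=\eta_t^2 N$: since $\sum_t \eta_t^2 N<\infty$, the theorem yields that $\|w_t-w_*\|^2$ converges almost surely to some nonnegative random variable and that $\sum_t \mu\eta_t\|w_t-w_*\|^2<\infty$ almost surely. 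Combined with $\sum_t\eta_t=\infty$, the latter forces $\liminf_t\|w_t-w_*\|^2=0$ almost surely, and together with almost sure convergence of the whole sequence this gives $\|w_t-w_*\|^2\to 0$ w.p.1, which is the conclusion of Theorem \ref{thm_general_02_new_02}.

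Since the argument is a verbatim transcription of the convex case with the single substitution $4L\mapsto 4L\kappa$ (hence $\frac{1}{2L}\mapsto\frac{1}{2L\kappa}$) in the bound on the stochastic second moment, there is essentially no new obstacle here; the only thing to be careful about is confirming that the convexity of $f(\cdot;\xi)$ was used \emph{nowhere else} in the original proof — in particular that the inner-product term was handled purely via strong convexity of $F$, not via convexity of individual realizations — so that the logic goes through unchanged. I would state the proof as "The proof is identical to that of Theorem \ref{thm_general_02_new_02}, using Lemma \ref{lem_bounded_secondmoment_04_new} in place of Lemma \ref{lem_bounded_secondmoment_04}; the bound $\eta_t\le\frac{1}{2L\kappa}$ is what makes the coefficient $-2\eta_t(1-2L\kappa\eta_t)$ of $[F(w_t)-F(w_*)]$ non-positive."
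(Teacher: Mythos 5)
Your proposal is correct and matches the paper's intended argument exactly: the paper itself states that Theorem \ref{thm_general_02_new_03} follows "based on the proof of Theorem \ref{thm_general_02_new_02}," i.e., by substituting Lemma \ref{lem_bounded_secondmoment_04_new} for Lemma \ref{lem_bounded_secondmoment_04}, so that the coefficient of $[F(w_t)-F(w_*)]$ becomes $-2\eta_t(1-2L\kappa\eta_t)\leq 0$ under the new step-size condition, after which the supermartingale argument proceeds unchanged. Your observation that convexity of the individual realizations enters the original proof only through the second-moment bound (the inner-product term being handled by strong convexity of $F$ alone) is precisely the point that makes the substitution valid.
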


\begin{thm}\label{thm_res_sublinear_new_03}
Let Assumptions \ref{ass_stronglyconvex} and \ref{ass_smooth}  hold. 
Then, we can conclude the statement of Theorem \ref{thm_res_sublinear_new_02}  with the definition of the step size replaced by $\eta_t = \frac{\alpha}{\mu(t+E)} \leq \eta_0=\frac{1}{2L\kappa}$
 with $\kappa = \frac{L}{\mu}$ and $\alpha=2$, and all other occurrences of $L$ in $E$ and $T$ replaced by $L\kappa$.
%
\end{thm}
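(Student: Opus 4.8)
The plan is to mirror the proof of Theorem~\ref{thm_res_sublinear_new_02} essentially line for line, the only change being that the second-moment estimate of Lemma~\ref{lem_bounded_secondmoment_04} is replaced by that of Lemma~\ref{lem_bounded_secondmoment_04_new}. The key observation is that Assumption~\ref{ass_convex} enters the proof of Theorem~\ref{thm_res_sublinear_new_02} \emph{only} through the inequality $\mathbb{E}[\|\nabla f(w;\xi)\|^2]\le 4L[F(w)-F(w_*)]+N$, and Lemma~\ref{lem_bounded_secondmoment_04_new} supplies precisely the same inequality with $4L$ replaced by $4L\kappa$. Since the constant multiplying $F(w)-F(w_*)$ in that bound is exactly what fixes the admissible step-size cap $\eta_0$ and the constants $E$ and $T$, carrying the extra factor $\kappa$ through the argument reproduces exactly the substitution $L\mapsto L\kappa$ asserted in the statement.

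Concretely I would proceed as follows. Starting from $w_{t+1}=w_t-\eta_t\nabla f(w_t;\xi_t)$, take the conditional expectation with respect to the history $\mathcal{F}_t$ and expand the square to obtain $\mathbb{E}[\|w_{t+1}-w_*\|^2\mid\mathcal{F}_t]=\|w_t-w_*\|^2-2\eta_t\langle\nabla F(w_t),w_t-w_*\rangle+\eta_t^2\,\mathbb{E}[\|\nabla f(w_t;\xi_t)\|^2\mid\mathcal{F}_t]$. Then lower bound $\langle\nabla F(w_t),w_t-w_*\rangle\ge F(w_t)-F(w_*)+\tfrac{\mu}{2}\|w_t-w_*\|^2$ by $\mu$-strong convexity (Assumption~\ref{ass_stronglyconvex}) and upper bound the last term by $4L\kappa[F(w_t)-F(w_*)]+N$ via Lemma~\ref{lem_bounded_secondmoment_04_new}. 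Collecting terms gives
\[
\mathbb{E}[\|w_{t+1}-w_*\|^2\mid\mathcal{F}_t]\le(1-\mu\eta_t)\|w_t-w_*\|^2-2\eta_t(1-2L\kappa\eta_t)[F(w_t)-F(w_*)]+\eta_t^2 N .
\]
Because $\eta_t\le\eta_0=\tfrac{1}{2L\kappa}$, the middle term is nonnegative and may be dropped; taking total expectations and setting $Y_t:=\mathbb{E}[\|w_t-w_*\|^2]$ yields the scalar recursion $Y_{t+1}\le(1-\mu\eta_t)Y_t+\eta_t^2 N$.

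Finally I would substitute $\eta_t=\tfrac{\alpha}{\mu(t+E)}$ with $\alpha=2$ and $E=\tfrac{2\alpha L\kappa}{\mu}$, which makes $\eta_0=\tfrac{1}{2L\kappa}$ (so the cap holds for all $t\ge 0$) and $\mu\eta_t=\tfrac{\alpha}{t+E}$, turning the recursion into $Y_{t+1}\le\bigl(1-\tfrac{\alpha}{t+E}\bigr)Y_t+\tfrac{\alpha^2N}{\mu^2(t+E)^2}$, and then run the same induction as in Theorem~\ref{thm_res_sublinear_new_02} to conclude $Y_t\le\tfrac{4\alpha^2N}{\mu^2}\cdot\tfrac{1}{t-T+E}$ for $t\ge T$, with $T$ obtained from the formula of Theorem~\ref{thm_res_sublinear_new_02} by replacing $L$ with $L\kappa$. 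The one step that needs genuine care, just as in Theorem~\ref{thm_res_sublinear_new_02}, is this induction: one must verify the elementary estimate $\bigl(1-\tfrac{\alpha}{t+E}\bigr)\tfrac{C}{t-T+E}+\tfrac{\alpha^2N}{\mu^2(t+E)^2}\le\tfrac{C}{t+1-T+E}$ with $C=\tfrac{4\alpha^2N}{\mu^2}$ to propagate the $O(1/t)$ ansatz, together with the base case, $T$ being chosen precisely as the index beyond which the contribution of $\|w_0-w_*\|^2$ no longer dominates the $O(1/t)$ term. Since $\eta_0$, $E$ and $T$ all scale linearly with the coefficient of $F(w)-F(w_*)$ in the second-moment bound, the passage from Theorem~\ref{thm_res_sublinear_new_02} to Theorem~\ref{thm_res_sublinear_new_03} is mechanical once Lemma~\ref{lem_bounded_secondmoment_04_new} is in hand, and no new obstacle arises.
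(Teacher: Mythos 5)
Your proposal is correct and follows exactly the route the paper intends: the paper gives no separate proof of this theorem beyond the remark that it follows from the proof of Theorem \ref{thm_res_sublinear_new_02} by swapping Lemma \ref{lem_bounded_secondmoment_04} for Lemma \ref{lem_bounded_secondmoment_04_new}, which is precisely what you carry out, correctly identifying that the factor $4L$ versus $4L\kappa$ in the second-moment bound is the only place the convexity assumption enters and that it propagates linearly into $\eta_0$, $E$, and $T$. (Minor wording quibble: the middle term $-2\eta_t(1-2L\kappa\eta_t)[F(w_t)-F(w_*)]$ is nonpositive, which is why it can be dropped from the upper bound; the quantity being subtracted is what is nonnegative.)
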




\begin{rem}
By strong convexity of $F$, Lemma \ref{lem_bounded_secondmoment_04_new} implies $\mathbb{E}[\|\nabla f(w; \xi)\|^2] \leq  2 \kappa^2 \| \nabla F(w) \|^2 + N$, for $\forall w \in \mathbb{R}^d$, where $\kappa = \frac{L}{\mu}$ and $N = 2 \mathbb{E}[ \|\nabla f(w_{*}; \xi)\|^2 ]$. We can now
substitute  the value $M = 2 \kappa^2$ into Theorem 4.7 in \cite{bottou2016optimization}. We  observe that the resulting initial learning rate in \cite{bottou2016optimization} has to satisfy $\eta_0 \leq \frac{1}{2 L_F \kappa^2}$ while our results allows $\eta_0 = \frac{1}{2 L \kappa}$. We notice that \cite{bottou2016optimization} only assumes that $F$ has Lipschitz continuous gradients with Lipschitz constant $L_F$ while we need the smoothness assumption on individual realizations. Therefore, $L_F$ and $L$ may be different. Both $L_F$ and $L$ values are hard to compare and $L_F$ in \cite{bottou2016optimization} can potentially be much smaller, however, no general comparative statements can be made. 

By introducing Assumption  \ref{ass_smooth}, which holds for many ML problems, we are able to provide the values of $M$ and $N$. Recall that under Assumption \ref{ass_convex}, our initial learning rate is $\eta_0 = \frac{1}{2 L}$ (in Theorem \ref{thm_res_sublinear_new_02}). Thus  Assumption \ref{ass_convex} provides an improvement of the conditions on the learning rate. 
\end{rem}

\section{Asynchronous Stochastic Optimization aka Hogwild!}
\label{general}

Hogwild! \cite{Hogwild} is an asynchronous stochastic optimization method where writes to and reads from vector  positions in shared memory can be inconsistent (this corresponds to (\ref{eqwM2}) as we shall see). 
However, as mentioned in~\cite{ManiaPanPapailiopoulosEtAl2015}, for the purpose of analysis  the method in \cite{Hogwild} performs single vector entry updates that are randomly selected from the non-zero entries of the computed gradient as in (\ref{eqwM1}) (explained later) and requires the assumption of consistent vector reads together with the bounded gradient assumption to prove convergence. Both \cite{ManiaPanPapailiopoulosEtAl2015} and \cite{DeSaZhangOlukotunEtAl2015} prove the same result for fixed step size based on the assumption of bounded stochastic gradients in the strongly convex case but now without assuming consistent vector reads and writes.
In these works the fixed step size $\eta$ must depend on $\sigma$ from the bounded gradient assumption, however, one does not usually know $\sigma$ and thus, we cannot compute a suitable $\eta$ a-priori. 

As claimed by the authors in \cite{ManiaPanPapailiopoulosEtAl2015}, they can eliminate the bounded gradient assumption in their analysis of Hogwild!, which however was only mentioned as a remark without proof. On the other hand, the authors of recent unpublished work~\cite{Leblond2018} formulate and prove, without  the bounded gradient assumption, a precise theorem about the convergence rate of Hogwild! of the form
$$ \mathbb{E}[\|w_{t} - w_* \|^2] \leq (1-\rho)^t(2 \|w_0-w_*\|^2) + b,$$
where $\rho$ is a function of several parameters but independent of the fixed chosen step size $\eta$  and where $b$ is a function of several parameters and has a linear dependency with respect to the fixed step size, i.e., $b=O(\eta)$.

In this section, we discuss  the convergence of Hogwild!  with \textbf{diminishing} stepsize where writes to and reads from vector  positions in shared memory can be  \textbf{inconsistent}. This is a slight modification of the original Hogwild! where the stepsize is fixed. 
In our analysis we also  \textbf{do not use the bounded gradient assumption} as in \cite{Leblond2018}. Moreover, (a) we focus on solving the   \textbf{more general problem} in \eqref{main_prob_expected_risk}, while \cite{Leblond2018} considers the specific case of  the ``finite-sum'' 
	problem in \eqref{main_prob}, and (b) we show that our analysis generalizes to the  \textbf{non-convex case} of the component functions, i.e., we do not need to assume functions $f(w;\xi)$ are convex (we only require $F(w) =  \mathbb{E}[f(w;\xi)]$ to be strongly convex) as opposed to the assumption in \cite{Leblond2018}.

\subsection{Recursion}

We first formulate a general recursion for $w_t$ to which our analysis applies, next we will explain how the different variables in the recursion interact and describe two special cases, and finally we present pseudo code of the algorithm using the recursion.  

The recursion explains which positions in $w_t$ should be updated in order to compute $w_{t+1}$. Since $w_t$ is stored in shared memory and is being updated in a possibly non-consistent way by multiple cores who each perform recursions, the shared memory will contain a vector $w$ whose entries represent a mix of updates. That is, before performing the computation of a recursion, a core will first read  $w$ from shared memory, however, while reading $w$ from shared memory, the entries in $w$ are being updated out of order. The final vector $\hat{w}_t$ read by the core represents an aggregate of a mix of updates in previous iterations.

The general recursion is defined as follows: For $t\geq 0$,
\begin{equation}
 w_{t+1} = w_t - \eta_t d_{\xi_t}  S^{\xi_t}_{u_t} \nabla f(\hat{w}_t;\xi_t),\label{eqwM}
 \end{equation}
 where
 \begin{itemize}
 \item $\hat{w}_t$ represents the vector used in computing the gradient $\nabla f(\hat{w}_t;\xi_t)$ and whose entries have been read (one by one)  from  an aggregate of a mix of  previous updates that led to $w_{j}$, $j\leq t$, and
 \item the $S^{\xi_t}_{u_t}$ are diagonal 0/1-matrices with the property that there exist real numbers $d_\xi$ satisfying
\begin{equation} d_\xi \mathbb{E}[S^\xi_u | \xi] = D_\xi, \label{eq:SexpM} \end{equation}
where the expectation is taken over $u$ and $D_\xi$ is the diagonal 0/1 matrix whose $1$-entries correspond to the non-zero positions in $\nabla f(w;\xi)$ in the following sense: The $i$-th entry of $D_\xi$'s diagonal is equal to 1 if and only if there exists a $w$ such that the $i$-th position of $\nabla f(w;\xi)$ is non-zero. 
\end{itemize}

The role of matrix $S^{\xi_t}_{u_t}$ is that it filters which positions of gradient $\nabla f(\hat{w}_t;\xi_t)$ play a role in (\ref{eqwM}) and need to be computed. Notice that $D_\xi$ represents the support of $\nabla f(w;\xi)$; by $|D_\xi|$ we denote the number of 1s in $D_\xi$, i.e., $|D_\xi|$ equals the size of the support of $\nabla f(w;\xi)$.

We will restrict ourselves to choosing (i.e., fixing a-priori) {\em non-empty} matrices  $S^\xi_u$ that ``partition'' $D_\xi$ in $D$ approximately ``equally sized'' $S^\xi_u$: 
$$ \sum_u S^\xi_u = D_\xi, $$
where each matrix $S^\xi_u$ has either $\lfloor |D_\xi|/D \rfloor$ or $\lceil |D_\xi|/D \rceil$ ones on its diagonal. We uniformly choose one of the matrices $S^{\xi_t}_{u_t}$ in (\ref{eqwM}), hence, $d_\xi$ equals the number of matrices $S^\xi_u$, see (\ref{eq:SexpM}).

In other to explain recursion (\ref{eqwM}) we first consider two special cases. For $D=\bar{\Delta}$, where 
$$ \bar{\Delta} = \max_\xi \{ |D_\xi|\}$$
represents the maximum number of non-zero positions in any gradient computation $f(w;\xi)$, we have that for all $\xi$, there are exactly $|D_\xi|$ diagonal matrices $S^\xi_u$ with a single 1 representing each of the elements in $D_\xi$. Since  $p_\xi(u)= 1/|D_\xi|$ is the uniform distribution, we have $\mathbb{E}[S^\xi_u | \xi] = D_\xi / |D_\xi|$, hence, $d_\xi = |D_\xi|$. This gives the recursion
\begin{equation}
 w_{t+1} = w_t - \eta_t |D_\xi|  [ \nabla f(\hat{w}_t;\xi_t)]_{u_t},\label{eqwM1}
 \end{equation}
 where $ [ \nabla f(\hat{w}_t;\xi_t)]_{u_t}$ denotes the $u_t$-th position of $\nabla f(\hat{w}_t;\xi_t)$ and where $u_t$ is a uniformly selected position that corresponds to a non-zero entry in  $\nabla f(\hat{w}_t;\xi_t)$.
 
At the other extreme, for $D=1$, we have exactly one matrix $S^\xi_1=D_\xi$ for each $\xi$, and we have $d_\xi=1$. This gives the recursion
\begin{equation}
 w_{t+1} = w_t - \eta_t  \nabla f(\hat{w}_t;\xi_t).\label{eqwM2}
 \end{equation}
Recursion (\ref{eqwM2}) represents Hogwild!. In a single-core setting where updates are done in a consistent way and $\hat{w}_t=w_t$ yields SGD.

 
 Algorithm \ref{HogWildAlgorithm} gives the pseudo code corresponding to recursion (\ref{eqwM}) with our choice of sets $S^\xi_u$ (for parameter $D$).
 
 \begin{algorithm}
\caption{Hogwild! general recursion}
\label{HogWildAlgorithm}
\begin{algorithmic}[1]

   \STATE {\bf Input:} $w_{0} \in \R^d$
   \FOR{$t=0,1,2,\dotsc$ {\bf in parallel}} 
    
  \STATE read each position of shared memory $w$
  denoted by $\hat w_t$  {\bf (each position read is atomic)}
  \STATE draw a random sample $\xi_t$ and a random ``filter'' $S^{\xi_t}_{u_t}$
  \FOR{positions $h$ where $S^{\xi_t}_{u_t}$ has a 1 on its diagonal}
   \STATE compute $g_h$ as the gradient $\nabla f(\hat{w}_t;\xi_t)$ at position $h$
   \STATE add $\eta_t d_{\xi_t} g_h$ to the entry at position $h$ of $w$ in shared memory {\bf (each position update is atomic)}
   \ENDFOR
   \ENDFOR
\end{algorithmic}
\end{algorithm}

\subsection{Analysis}

Besides Assumptions \ref{ass_stronglyconvex}, \ref{ass_smooth}, and for now \ref{ass_convex}, we assume the following assumption regarding a parameter $\tau$, called the delay, which indicates which updates in previous iterations have certainly made their way into shared memory $w$.

\begin{ass}[Consistent with delay $\tau$]
\label{ass_tau}
We say that shared memory is consistent with delay $\tau$  with respect to recursion (\ref{eqwM}) if, for all $t$, vector $\hat{w}_t$ includes the aggregate of the updates up to and including those made during the $(t-\tau)$-th iteration (where (\ref{eqwM}) defines the $(t+1)$-st iteration). Each position read from shared memory is atomic and each position update to shared memory is atomic (in that these cannot be interrupted by another update to the same position).
\end{ass}

In other words in the $(t+1)$-th iteration,  $\hat{w}_t$ equals  $w_{t-\tau}$ plus some subset of position updates made during iterations $t-\tau, t-\tau+1, \ldots, t-1$. We assume that there exists a constant delay $\tau$ satisfying Assumption \ref{ass_tau}.

\subsection{Convergence With Probability One}

Appendix \ref{subsec:convergence_Hogwild_wp1} 
proves the following theorem

\begin{thm} [\textbf{Sufficient conditions for almost sure convergence for Hogwild!}]\label{Hogwild:theorem_convergence}
Let Assumptions \ref{ass_stronglyconvex}, \ref{ass_smooth}, \ref{ass_convex} and \ref{ass_tau} hold. Consider Hogwild! method described in Algorithm~\ref{HogWildAlgorithm} with a stepsize sequence such that
\begin{align*}
0 < \eta_t=\frac{1}{LD(2+\beta)(k+t)} < \frac{1}{4LD} , \beta>0, k \geq 3\tau. 
\end{align*}
Then, the following holds w.p.1 (almost surely)
\begin{align*}
\|w_{t} - w_{*} \| \rightarrow 0. 
\end{align*} 
\end{thm}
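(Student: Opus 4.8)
The plan is to mimic the proof of Theorem~\ref{thm_general_02_new_02}, replacing the exact SGD step by the general recursion~(\ref{eqwM}) and carefully accounting for the discrepancy between the ``true'' iterate $w_t$ and the stale, partially updated vector $\hat w_t$ actually used to evaluate the gradient. First I would expand
\[
\|w_{t+1}-w_*\|^2 = \|w_t-w_*\|^2 - 2\eta_t \langle w_t-w_*,\, d_{\xi_t}S^{\xi_t}_{u_t}\nabla f(\hat w_t;\xi_t)\rangle + \eta_t^2 \|d_{\xi_t}S^{\xi_t}_{u_t}\nabla f(\hat w_t;\xi_t)\|^2 ,
\]
and take the expectation conditioned on the history $\mathcal F_t$ available just after $\hat w_t$ has been read, so that $\hat w_t$ is $\mathcal F_t$-measurable. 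Using $d_\xi\mathbb{E}[S^\xi_u\mid\xi]=D_\xi$ together with $D_\xi\nabla f(w;\xi)=\nabla f(w;\xi)$, and then averaging over $\xi_t$, the cross term collapses to $-2\eta_t\langle w_t-w_*,\nabla F(\hat w_t)\rangle$. For the quadratic term the partition property $\sum_u S^\xi_u=D_\xi$ gives $\mathbb{E}_{u}[\|d_\xi S^\xi_u v\|^2]=d_\xi\|D_\xi v\|^2\le D\|v\|^2$, and then Lemma~\ref{lem_bounded_secondmoment_04} bounds $\mathbb{E}[\|\nabla f(\hat w_t;\xi_t)\|^2\mid\mathcal F_t]\le 4L[F(\hat w_t)-F(w_*)]+N$. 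So far this yields a clean inequality expressed in $\hat w_t$ only.

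The heart of the argument is to convert from $\hat w_t$ back to $w_t$ and the recent iterates. Writing $w_t-w_*=(\hat w_t-w_*)+(w_t-\hat w_t)$, strong convexity of $F$ gives
\[
\langle w_t-w_*,\nabla F(\hat w_t)\rangle\ge [F(\hat w_t)-F(w_*)]+\tfrac{\mu}{2}\|\hat w_t-w_*\|^2-\|w_t-\hat w_t\|\,\|\nabla F(\hat w_t)\| ,
\]
and the last, ``delay'', term is split via Young's inequality using $\|\nabla F(\hat w_t)\|^2\le 2L[F(\hat w_t)-F(w_*)]$. By Assumption~\ref{ass_tau} the vector $w_t-\hat w_t$ is a coordinate-subset of the updates performed in iterations $t-\tau,\dots,t-1$, so $\|w_t-\hat w_t\|\le\sum_{j=t-\tau}^{t-1}\eta_j\|d_{\xi_j}S^{\xi_j}_{u_j}\nabla f(\hat w_j;\xi_j)\|$; Cauchy--Schwarz and the same second-moment bound control $\mathbb{E}[\|w_t-\hat w_t\|^2]$ by $\tau\sum_{j=t-\tau}^{t-1}\eta_j^2 D\,(4L[F(\hat w_j)-F(w_*)]+N)$. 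Collecting everything, I obtain a recursion of the shape
\[
\mathbb{E}[\|w_{t+1}-w_*\|^2\mid\mathcal F_t] \le (1-\mu\eta_t)\|w_t-w_*\|^2 - c\,\eta_t[F(\hat w_t)-F(w_*)] + \eta_t\!\!\sum_{j=t-\tau}^{t-1}\!\! a_j\,[F(\hat w_j)-F(w_*)] + b_t ,
\]
where $b_t=O(\eta_t^2)$ is summable and the coefficients $a_j=O(\eta_j^2 LD\tau)$ are small.

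To finish I would turn this coupled inequality into an honest supermartingale by introducing a Lyapunov function $V_t=\|w_t-w_*\|^2+\sum_{j=t-\tau}^{t-1}\gamma_{t,j}\,\eta_j[F(\hat w_j)-F(w_*)]$ with suitably chosen weights $\gamma_{t,j}$, so that the forward-shifted delay terms are absorbed and what remains is $\mathbb{E}[V_{t+1}\mid\mathcal F_t]\le V_t - c'\eta_t[F(\hat w_t)-F(w_*)] + (\text{summable})$. This is where the hypotheses $\eta_t=\tfrac{1}{LD(2+\beta)(k+t)}<\tfrac{1}{4LD}$ and $k\ge 3\tau$ enter: they guarantee $\sum_t\eta_t^2<\infty$, $\sum_t\eta_t=\infty$, and — crucially — that the step sizes vary slowly over any window of length $\tau$ (indeed $\eta_{t-\tau}/\eta_t=(k+t)/(k+t-\tau)\le 3/2$ once $k\ge 3\tau$), so the delay coefficients can be dominated by the descent term $-c\eta_t[F(\hat w_t)-F(w_*)]$ and by the summable remainder. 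Applying the Supermartingale Convergence Theorem (Lemma~\ref{prop_supermartingale}) then gives that $V_t$, hence $\|w_t-w_*\|^2$, converges a.s.\ and that $\sum_t\eta_t[F(\hat w_t)-F(w_*)]<\infty$ a.s.; since $\sum_t\eta_t=\infty$ this forces $\liminf_t[F(\hat w_t)-F(w_*)]=0$, so by strong convexity $\liminf_t\|\hat w_t-w_*\|=0$, and because the update norms are square-summable a.s.\ we also get $\|w_t-\hat w_t\|\to 0$ a.s.; hence $\liminf_t\|w_t-w_*\|=0$, which together with a.s.\ convergence of $\|w_t-w_*\|^2$ yields $\|w_t-w_*\|\to 0$ w.p.1.

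I expect the main obstacle to be precisely the bookkeeping around $\hat w_t$: bounding $\|w_t-\hat w_t\|$ uniformly over the adversarial interleaving permitted by Assumption~\ref{ass_tau}, and then designing the Lyapunov weights $\gamma_{t,j}$ so that a genuine supermartingale emerges despite the $\tau$-step coupling. That is the step for which the exact form of $\eta_t$ and the constraint $k\ge 3\tau$ are indispensable; the remainder is a fairly routine adaptation of the single-machine argument behind Theorem~\ref{thm_general_02_new_02}.
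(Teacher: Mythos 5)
Your plan departs from the paper's proof at the decisive step, and the step you defer is exactly where the difficulty lies. The paper does not build a Lyapunov function that absorbs delayed function-value gaps. It reuses Lemma~\ref{lemma:hogwild_1_new1} verbatim, obtaining
$\mathbb{E}[\| w_{t+1}-w_*\|^2\mid\mathcal F_t]\le(1-\tfrac{\mu\eta_t}{2})\|w_t-w_*\|^2+[(L+\mu)\eta_t+2L^2\eta_t^2D]\,\|\hat w_t-w_t\|^2+2\eta_t^2DN$,
and then shows that the perturbation $W_t=[(L+\mu)\eta_t+2L^2\eta_t^2D]\|\hat w_t-w_t\|^2$ is \emph{deterministically} summable, so Lemma~\ref{prop_supermartingale} applies exactly as in Theorem~\ref{thm_general_02_new_02} with no modification of the potential. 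The summability rests on a growth estimate your proposal never produces: with $m_t$ the maximum gradient norm over all samples and all iterates up to $t$, smoothness gives $m_{t+1}\le(1+LD\eta_t)m_t$, hence $m_t\le m_0\exp(LD\sum_{i<t}\eta_i)$ as in \eqref{eq:mt_formula1}; for $\eta_i=\frac{1}{LD(2+\beta)(k+i)}$ this is $O(t^{1/(2+\beta)})$, so $\|w_t-\hat w_t\|^2\le(1+\sqrt{\Delta}\tau)D^2\tau\,m_t^2\eta_{t-\tau}^2=O(t^{-(1+\rho)})$ with $\rho=\beta/(2+\beta)>0$, which is summable pathwise. This is the only place the prefactor $\frac{1}{LD(2+\beta)}$ is actually needed; you invoke the step size only for $\sum_t\eta_t^2<\infty$ and for $\eta_{t-\tau}/\eta_t\le 3/2$, which does not reproduce this mechanism.

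The gap in your own route is not merely bookkeeping. The displayed recursion you aim for, with a term $\eta_t\sum_{j=t-\tau}^{t-1}a_j[F(\hat w_j)-F(w_*)]$ appearing inside $\mathbb{E}[\,\cdot\mid\mathcal F_t]$, is not a legitimate conditional-expectation inequality: for $j<t$ the realized update norms $\|d_{\xi_j}S^{\xi_j}_{u_j}\nabla f(\hat w_j;\xi_j)\|^2$ are $\mathcal F_t$-measurable, so $\|w_t-\hat w_t\|^2$ is a fixed (already realized) quantity given $\mathcal F_t$ and cannot be replaced by $\tau\sum_j\eta_j^2D(4L[F(\hat w_j)-F(w_*)]+N)$ at time $t$; that replacement uses $\mathbb{E}[\|\nabla f(\hat w_j;\xi_j)\|^2\mid\mathcal F_j]$, an expectation that can only be taken at time $j$. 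The paper's Lemma~\ref{lemma:hogwild_21} performs this substitution only under the \emph{full} expectation, which suffices for the in-expectation rate of Theorem~\ref{theorem:Hogwild_newnew1} but not for an almost-sure supermartingale argument. To repair your route you would have to carry the realized quantities $\eta_j^2\|d_{\xi_j}S^{\xi_j}_{u_j}\nabla f(\hat w_j;\xi_j)\|^2$ themselves in the potential $V_t$ and convert each one at its creation time $j$, choosing the weights $\gamma_{t,j}$ so that the resulting coefficients on $[F(\hat w_j)-F(w_*)]$ are dominated by the retained descent terms $-c\,\eta_j[F(\hat w_j)-F(w_*)]$ (which Lemma~\ref{lemma:hogwild_1_new1} discards). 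That construction is plausible but is precisely the unproved heart of your argument; as written, the proposal does not establish the theorem, whereas the paper's deterministic $m_t$ bound sidesteps the whole issue.
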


\subsection{Convergence in Expectation}
Appendix~\ref{subsec_analysis} proves the following theorem where 
$$\bar{\Delta}_D \eqdef D \cdot \mathbb{E}[\lceil |D_\xi|/D \rceil].$$

\begin{thm}
\label{theorem:Hogwild_newnew1}
Suppose Assumptions \ref{ass_stronglyconvex}, \ref{ass_smooth}, \ref{ass_convex} and \ref{ass_tau}  and  consider Algorithm~\ref{HogWildAlgorithm} for sets $S^\xi_u$ with parameter $D$. Let  $\eta_t = \frac{\alpha_t}{\mu(t+E)}$ with $4\leq \alpha_t \leq\alpha$ and $E = \max\{ 2\tau, \frac{4 L \alpha D}{\mu}\}$. Then, the expected number of single vector entry updates after $t$ iterations is equal to
$$t' = t \bar{\Delta}_D /D,$$ and
  \begin{eqnarray*}
 \mathbb{E}[\|\hat{w}_{t} - w_* \|^2] &\leq \frac{4\alpha^2D N}{\mu^2} \frac{t}{(t + E - 1)^2} + O\left(\frac{\ln t}{(t+E-1)^{2}}\right),\\
 \mathbb{E}[\|w_{t} - w_* \|^2]  &\leq \frac{4\alpha^2D N}{\mu^2} \frac{t}{(t + E - 1)^2} + O\left(\frac{\ln t}{(t+E-1)^{2}}\right),
\end{eqnarray*} 
where $N = 2 \mathbb{E}[ \|\nabla f(w_{*}; \xi)\|^2 ]$ and $w_{*} = \arg \min_w F(w)$. 
\end{thm}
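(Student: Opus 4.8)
The plan is to collapse the vector recursion \eqref{eqwM} into a scalar recursion for $Y_t:=\mathbb{E}[\|w_t-w_*\|^2]$ that carries a single additional ``delay'' term, and then to solve that recursion by strong induction, following the proof of Theorem \ref{thm_res_sublinear_new_02} but paying for the lag between the read vector $\hat w_t$ and the true iterate $w_t$. Expanding $\|w_{t+1}-w_*\|^2$ from \eqref{eqwM} and first taking expectation over the uniformly drawn filter $u_t$ (given $\xi_t$ and the history), I use $d_{\xi_t}\mathbb{E}[S^{\xi_t}_{u_t}\mid\xi_t]=D_{\xi_t}$, the support identity $D_{\xi_t}\nabla f(\hat w_t;\xi_t)=\nabla f(\hat w_t;\xi_t)$, and the partition identity $\sum_u\|S^{\xi_t}_u\nabla f(\hat w_t;\xi_t)\|^2=\|\nabla f(\hat w_t;\xi_t)\|^2$: the cross term collapses to $-2\eta_t\langle\nabla f(\hat w_t;\xi_t),w_t-w_*\rangle$ and the quadratic term to $\eta_t^2 d_{\xi_t}\|\nabla f(\hat w_t;\xi_t)\|^2\le D\eta_t^2\|\nabla f(\hat w_t;\xi_t)\|^2$.

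Next I take expectation over $\xi_t$ (with respect to which $w_t$ and $\hat w_t$ are measurable and $\xi_t$ is independent). Writing $w_t-w_*=(\hat w_t-w_*)+(w_t-\hat w_t)$, the aligned part is controlled by strong convexity of $F$, $\langle\nabla F(\hat w_t),\hat w_t-w_*\rangle\ge[F(\hat w_t)-F(w_*)]+\tfrac\mu2\|\hat w_t-w_*\|^2$, and the lag part by $\|\nabla F(\hat w_t)\|=\|\nabla F(\hat w_t)-\nabla F(w_*)\|\le L\|\hat w_t-w_*\|$ together with Young's inequality; the quadratic term is bounded via Lemma \ref{lem_bounded_secondmoment_04}, $\mathbb{E}[\|\nabla f(\hat w_t;\xi_t)\|^2]\le 4L[F(\hat w_t)-F(w_*)]+N$. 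Since $\eta_t\le\tfrac1{4LD}$ (a consequence of $E\ge\tfrac{4L\alpha D}{\mu}$ and $\alpha_t\le\alpha$), the $F(\hat w_t)-F(w_*)$ contributions combine with coefficient at most $-\eta_t$, which I convert back to $-\tfrac{\mu\eta_t}{2}\|\hat w_t-w_*\|^2$ by strong convexity to reinforce the contraction; using also $\|\hat w_t-w_*\|^2\ge\tfrac12\|w_t-w_*\|^2-\|w_t-\hat w_t\|^2$ and $\alpha_t\ge 4$ yields
\[ Y_{t+1}\;\le\;\Bigl(1-\tfrac{2}{t+E}\Bigr)Y_t+\tfrac{D\alpha_t^2 N}{\mu^2(t+E)^2}+c\,\eta_t\,\mathbb{E}[\|w_t-\hat w_t\|^2] \]
for an absolute constant $c$; the requirement $\alpha_t\ge 4$ (versus $\alpha=2$ for SGD in Theorem \ref{thm_res_sublinear_new_02}) is precisely the factor the asynchronous splitting costs.

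The second ingredient is to bound the delay term. By Assumption \ref{ass_tau}, $w_t-\hat w_t$ is a sum of coordinate-restricted pieces of the updates made in iterations $(t-\tau)_+,\dots,t-1$, so $\|w_t-\hat w_t\|\le\sum_{j=(t-\tau)_+}^{t-1}\|w_{j+1}-w_j\|\le D\sum_{j=(t-\tau)_+}^{t-1}\eta_j\|\nabla f(\hat w_j;\xi_j)\|$. Cauchy--Schwarz over the at most $\tau$ summands, expectation, Lemma \ref{lem_bounded_secondmoment_04}, and $F(\hat w_j)-F(w_*)\le\tfrac L2\|\hat w_j-w_*\|^2$ give $\mathbb{E}[\|w_t-\hat w_t\|^2]\le c'D^2\tau\sum_{j=(t-\tau)_+}^{t-1}\eta_j^2\bigl(L^2\hat Y_j+N\bigr)$, where $\hat Y_j:=\mathbb{E}[\|\hat w_j-w_*\|^2]$. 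Because $E\ge 2\tau$ the step sizes vary by only a bounded factor over each window, so $\eta_j$ and $\eta_t$ are comparable for $j\in[(t-\tau)_+,t]$; substituting this bound back into the recursion makes it self-referential in the $\hat Y_j$ over a sliding window of length $\tau$.

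The argument then closes by strong induction on $t$: under the inductive hypothesis $\hat Y_j\le\frac{4\alpha^2 DN}{\mu^2}\frac{j}{(j+E-1)^2}+O\!\bigl(\frac{\ln j}{(j+E-1)^2}\bigr)$ for $j<t$, every summand in the window bound is $O(1/j^3)$ from the $L^2\hat Y_j$ piece and $O(1/j^2)$ from the $N$ piece, so $c\,\eta_t\,\mathbb{E}[\|w_t-\hat w_t\|^2]$ is of order $D^2\tau^2/t^3$ (or smaller) -- strictly lower order than the $O(1/t)$ main term -- and the induction step then reduces to the computation in the proof of Theorem \ref{thm_res_sublinear_new_02} for the synchronous recursion $Y_{t+1}\le(1-\tfrac2{t+E})Y_t+\tfrac{c''}{(t+E)^2}$, with the accumulated lower-order contributions swept into the $O(\ln t/(t+E-1)^2)$ remainder. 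The bound for $\mathbb{E}[\|\hat w_t-w_*\|^2]$ follows from that for $Y_t$ via $\|\hat w_t-w_*\|^2\le 2\|w_t-w_*\|^2+2\|w_t-\hat w_t\|^2$ and the same window estimate, and $t'=t\bar\Delta_D/D$ is immediate counting (iteration $t$ touches $|S^{\xi_t}_{u_t}|\le\lceil|D_{\xi_t}|/D\rceil$ entries, with expected sum $t\,\mathbb{E}[\lceil|D_\xi|/D\rceil]=t\bar\Delta_D/D$). The main obstacle is exactly this self-referential delay term: the crude bound on $\|w_t-\hat w_t\|$ reintroduces the distances $\hat Y_j$ over a whole window, so the recursion is no longer a clean scalar contraction, and one must track $Y_t$ jointly with the sliding window of $\hat Y_j$'s and verify that the feedback is benign -- in particular that the induction still closes with the advertised constant $\frac{4\alpha^2 DN}{\mu^2}$ rather than one that degrades with $\tau$. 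Everything else is bookkeeping.
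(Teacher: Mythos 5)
Your proposal follows essentially the same route as the paper's proof: a one-step recursion for $\mathbb{E}[\|w_{t+1}-w_*\|^2]$ carrying a penalty proportional to $\eta_t\,\mathbb{E}[\|w_t-\hat{w}_t\|^2]$ (the paper's Lemmas \ref{lem:expect} and \ref{lemma:hogwild_1_new1}), a sliding-window bound on that penalty in terms of $\mathbb{E}[\|\hat{w}_j-w_*\|^2]$ over the last $\tau$ iterations (Lemma \ref{lemma:hogwild_21}), and an unrolling/induction that isolates the leading term $\frac{4\alpha^2 D N}{\mu^2}\frac{t}{(t+E-1)^2}$ while sweeping the self-referential delay feedback into the $O(\ln t/(t+E-1)^2)$ remainder. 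Your only deviations are local and harmless: you handle the cross term with Young's inequality rather than the $L$-smoothness bound $-\langle \nabla F(\hat{w}_t), w_t-\hat{w}_t\rangle \leq F(\hat{w}_t)-F(w_t)+\frac{L}{2}\|\hat{w}_t-w_t\|^2$, and your triangle-inequality/Cauchy--Schwarz delay estimate yields a factor of order $D^2\tau$ where the paper's sparsity-aware inner-product bound yields $(1+\sqrt{\Delta}\tau)D$ --- looser constants that enter only the non-leading term, so the theorem as stated still follows.
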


In terms of $t'$, the expected number  single vector entry updates after $t$ iterations, $\mathbb{E}[\|\hat{w}_{t} - w_* \|^2]$ and $\mathbb{E}[\|w_{t} - w_* \|^2]$ are at most
$$\frac{4\alpha^2 \bar{\Delta}_D N}{\mu^2} \frac{1}{t'} + O\left(\frac{\ln t'}{t'^{2}}\right).$$

\begin{rem}
In (\ref{eqwM1}) $D=\bar{\Delta}$, hence, $\lceil |D_\xi|/D \rceil =1$ and $\bar{\Delta}_D = \bar{\Delta} = \max_\xi \{|D_\xi|\}$. In (\ref{eqwM2}) $D=1$, hence, $\bar{\Delta}_D = \mathbb{E}[|D_\xi|]$. This shows that the upper bound in Theorem \ref{theorem:Hogwild_newnew1} is better for (\ref{eqwM2}) with $D=1$. If we assume no delay, i.e. $\tau=0$, in addition to $D=1$, then we obtain SGD. Theorem \ref{thm_res_sublinear_new_02} shows that, measured in $t'$, we obtain the upper bound
$$\frac{4\alpha_{SGD}^2 \bar{\Delta}_D N}{\mu^2} \frac{1}{t'} $$
with $\alpha_{SGD}=2$ as opposed to $\alpha\geq 4$.

With respect to parallelism, SGD assumes a single core, while (\ref{eqwM2}) and (\ref{eqwM1}) allow multiple cores. 
Notice that recursion (\ref{eqwM1}) allows us to partition the position of the shared memory among the different processor cores in such a way that each partition can only be updated by its assigned core and where partitions can be read by all cores. This allows optimal resource sharing and could make up for the difference between $\bar{\Delta}_D$ for (\ref{eqwM1}) and (\ref{eqwM2}). We hypothesize that, for a parallel implementation, $D$ equal to a fraction of $\bar{\Delta}$ will lead to best performance.
\end{rem}

\begin{rem}
Surprisingly, the leading term of the upper bound on the convergence rate is independent of delay $\tau$. On one hand, one would expect that a more recent read which contains more of the updates done during the last $\tau$ iterations will lead to better convergence. When inspecting the second order term in the proof in 
Appendix~\ref{subsec_analysis}, we do see that a smaller $\tau$ (and/or smaller sparsity) makes the convergence rate smaller. That is, asymptotically $t$ should be large enough as a function of $\tau$ (and other parameters) in order for the leading term to dominate. 

Nevertheless, in asymptotic terms (for larger $t$) the dependence on $\tau$ is not noticeable.  In fact, 
Appendix~\ref{subsec_sens_tau} shows that we may allow $\tau$ to be a monotonic increasing function of $t$ with
$$\frac{2 L \alpha D}{\mu}\leq \tau(t)\leq \sqrt{t \cdot L(t)},$$
where $L(t)=\frac{1}{\ln t} - \frac{1}{(\ln t)^2}$ (this will make $E = \max\{ 2\tau(t), \frac{4 L \alpha D}{\mu}\}$ also a function of $t$). The leading term of the convergence rate does not change while the second order terms increase to $O(\frac{1}{t\ln t})$. We show that, for
$$ t\geq T_0 =  \exp[ 2\sqrt{\Delta}(1+\frac{(L+\mu)\alpha}{\mu})],$$
where $\Delta= \max_i \Prob \left(   i \in  D_\xi  \right)$ measures sparsity,
the higher order terms that contain $\tau(t)$ (as defined above) are at most the leading term.

Our intuition behind this phenomenon is that for large $\tau$, all the last $\tau$ iterations before the $t$-th iteration use vectors $\hat{w}_j$  with entries that are dominated by the aggregate of updates that happened till iteration $t-\tau$. Since the average sum of the updates during the last $\tau$ iterations is equal to 
 \begin{equation} - \frac{1}{\tau} \sum_{j=t-\tau}^{t-1} \eta_j d_{\xi_j}  S^{\xi_j}_{u_j} \nabla f(\hat{w}_j;\xi_t) \label{Eqtau} \end{equation}
 and all $\hat{w}_j$ look alike in that they mainly represent learned information before the $(t-\tau)$-th iteration, (\ref{Eqtau}) becomes an estimate 
 of the expectation of  (\ref{Eqtau}), i.e.,
 \begin{equation}
 \sum_{j=t-\tau}^{t-1} \frac{-\eta_j}{\tau}  \mathbb{E}[d_{\xi_j}  S^{\xi_j}_{u_j} \nabla f(\hat{w}_j;\xi_t)] 
 =\sum_{j=t-\tau}^{t-1} \frac{-\eta_j}{\tau}  \nabla F(\hat{w}_j). \label{EGD} 
 \end{equation}
 This looks like GD which in the strong convex case has convergence rate $\leq c^{-t}$ for some constant $c>1$. This already shows that larger $\tau$ could help convergence as well. However, 
 estimate (\ref{Eqtau}) has estimation noise with respect to (\ref{EGD}) which explains why in this thought experiment we cannot attain $c^{-t}$ but can only reach a much smaller convergence rate of e.g. $O(1/t)$ as in Theorem \ref{theorem:Hogwild_newnew1}. 
 
 Experiments in Section \ref{sec_experiments} confirm our analysis.
\end{rem}
 
\begin{rem} 
The higher order terms in the proof 
in Appendix~\ref{subsec_analysis} show that, as in Theorem \ref{thm_res_sublinear_new_02},  the expected convergence rate in Theorem \ref{theorem:Hogwild_newnew1} depends on $\|w_0-w_*\|^2$. The proof shows that, for 
$$ t \geq T_1 = \frac{\mu^2}{\alpha^2 N D}\|w_0-w_*\|^2,$$
the higher order term that contains $\|w_0-w_*\|^2$ is at most the leading term. This is comparable to $T$ in Theorem \ref{thm_res_sublinear_new_02} for SGD.
\end{rem}

\begin{rem}
Step size $\eta_t=\frac{\alpha_t}{\mu(t+E)}$ with $4\leq \alpha_t \leq\alpha$ can be chosen to be fixed during periods whose ranges exponentially increase. For $t+E\in [2^h,2^{h+1})$ we define $\alpha_t= \frac{4(t+E)}{2^h}$. Notice that $4\leq \alpha_t<8$ which satisfies the conditions of Theorem \ref{theorem:Hogwild_newnew1} for $\alpha=8$. This means that we can choose 
$$\eta_t = \frac{\alpha_t}{\mu(t+E)}=\frac{4}{\mu 2^h}$$
as step size for $t+E\in [2^h,2^{h+1})$. This choice for $\eta_t$ allows changes in $\eta_t$ to be easily synchronized between cores since these changes only happen when $t+E=2^h$ for some integer $h$. That is, if each core is processing iterations at the same speed, then each core on its own may reliably assume that after having processed $(2^h-E)/P$ iterations the aggregate of all $P$ cores has approximately processed $2^h-E$ iterations. So, after $(2^h-E)/P$ iterations a core will increment its version of $h$ to $h+1$. This will introduce some noise as the different cores will not increment their $h$ versions at exactly the same time, but this only happens during a small interval around every $t+E=2^h$. This will occur rarely for larger $h$.
\end{rem}

\subsection{Convergence Analysis without Convexity}

 In Appendix~\ref{subsec_withoutconvex}, we also show that the proof of Theorem \ref{theorem:Hogwild_newnew1} can easily be modified such that Theorem \ref{theorem:Hogwild_newnew1} with $E\geq \frac{4L\kappa \alpha D}{\mu}$ also holds in the non-convex case of the component functions, i.e., we do not need  Assumption \ref{ass_convex}. Note that this case is not analyzed in \cite{Leblond2018}.

\begin{thm}\label{thm_6}
Let Assumptions \ref{ass_stronglyconvex} and \ref{ass_smooth}  hold. 
Then, we can conclude the statement of Theorem \ref{theorem:Hogwild_newnew1} with $E\geq \frac{4L\kappa \alpha D}{\mu}$ for $\kappa = \frac{L}{\mu}$.
\end{thm}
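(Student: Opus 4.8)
The plan is to reopen the proof of Theorem~\ref{theorem:Hogwild_newnew1} given in Appendix~\ref{subsec_analysis} and isolate every place where Assumption~\ref{ass_convex} is used. My claim is that convexity of the component functions $f(\cdot;\xi)$ enters that argument in exactly one spot: the second-moment bound of Lemma~\ref{lem_bounded_secondmoment_04}, namely $\mathbb{E}[\|\nabla f(w;\xi)\|^2]\le 4L[F(w)-F(w_*)]+N$. Every other ingredient of the recursion analysis --- the per-iteration expansion of $\|w_{t+1}-w_*\|^2$ and $\|\hat w_{t+1}-w_*\|^2$, the bookkeeping for the delayed read $\hat w_t = w_{t-\tau}+(\text{partial updates})$, the properties (\ref{eq:SexpM}) of the filter matrices $S^\xi_u$, the descent inequality coming from $L$-smoothness of $F$ (inherited from Assumption~\ref{ass_smooth} via (\ref{eq:Lsmooth})), and the bound $2\mu[F(w)-F(w_*)]\le\|\nabla F(w)\|^2$ from strong convexity (\ref{eq:stronglyconvex}) --- relies only on Assumptions~\ref{ass_stronglyconvex} and \ref{ass_smooth}.

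First I would replace the appeal to Lemma~\ref{lem_bounded_secondmoment_04} by an appeal to Lemma~\ref{lem_bounded_secondmoment_04_new}, which is available here because Theorem~\ref{thm_6} assumes strong convexity of $F$. This swaps the bound for $\mathbb{E}[\|\nabla f(w;\xi)\|^2]\le 4L\kappa[F(w)-F(w_*)]+N$ with the same additive constant $N=2\mathbb{E}[\|\nabla f(w_*;\xi)\|^2]$ and $\kappa=L/\mu$. The proof of Theorem~\ref{theorem:Hogwild_newnew1} then goes through verbatim after the formal substitution $L\mapsto L\kappa$ in every term that originated from the second-moment bound --- in particular in the constraint on $E$, which becomes $E=\max\{2\tau,\frac{4L\kappa\alpha D}{\mu}\}$, i.e.\ it suffices to take any $E$ with $E\ge \frac{4L\kappa\alpha D}{\mu}$ and $E\ge 2\tau$. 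The leading term $\frac{4\alpha^2 D N}{\mu^2}\cdot\frac{t}{(t+E-1)^2}$ (for both $\mathbb{E}[\|\hat w_t-w_*\|^2]$ and $\mathbb{E}[\|w_t-w_*\|^2]$) is unaffected, since its coefficient depends only on $\alpha,D,N,\mu$ and not on $L$; the $O(\ln t/(t+E-1)^2)$ remainder keeps its order (its hidden constants acquire factors of $\kappa$, but the asymptotic rate and the expression for $t'$ are unchanged).

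Concretely, the steps in order: (i) recall from Lemma~\ref{lem_bounded_secondmoment_04_new} the non-convex second-moment bound with constant $4L\kappa$; (ii) walk through the recursion in Appendix~\ref{subsec_analysis} and verify line by line that Assumption~\ref{ass_convex} is invoked nowhere except where that second-moment bound is plugged in; (iii) perform the substitution $L\mapsto L\kappa$ in those terms, which tightens the admissible $E$ to $E\ge\frac{4L\kappa\alpha D}{\mu}$ and leaves the statement of Theorem~\ref{theorem:Hogwild_newnew1} otherwise intact; (iv) note that the almost-sure statement (Theorem~\ref{Hogwild:theorem_convergence}) survives analogously, since it too only feeds the second-moment bound into a supermartingale argument.

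The main obstacle is step (ii): one must check that convexity of the components is not hiding in the handling of the cross terms between the true iterate $w_t$ and the noisy read $\hat w_t$, or in some intermediate inequality involving $\langle \nabla f(\hat w_t;\xi_t),\cdot\rangle$ that might have been simplified using $f(\cdot;\xi)$ convexity rather than just $L$-smoothness. A reassuring sanity check is the single-core case: Theorems~\ref{thm_general_02_new_02}/\ref{thm_res_sublinear_new_02} become Theorems~\ref{thm_general_02_new_03}/\ref{thm_res_sublinear_new_03} under precisely this recipe ($L\to L\kappa$, $N$ unchanged), which strongly suggests the Hogwild! proof degrades the same way. If some inequality genuinely needs component convexity, the fallback is to re-derive it from $L$-smoothness together with $\nabla F(w_*)=0$ at the cost of one extra factor of $\kappa$ --- exactly the slack already built into the hypothesis $E\ge\frac{4L\kappa\alpha D}{\mu}$.
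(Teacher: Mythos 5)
Your proposal is correct and follows essentially the same route as the paper's own argument in Appendix~\ref{subsec_withoutconvex}: the paper likewise observes that Assumption~\ref{ass_convex} enters only through the second-moment bound of Lemma~\ref{lem_bounded_secondmoment_04} (applied in (\ref{LN}) and (\ref{ineq:bounded_lemma3_hogwild})), replaces it by Lemma~\ref{lem_bounded_secondmoment_04_new}, propagates $L\mapsto L\kappa$ through Lemmas~\ref{lemma:hogwild_21}, \ref{lemma:hogwild_1_new1} and \ref{lemma:hogwild_4_new1}, and strengthens the condition to $E\geq \frac{4L\kappa\alpha D}{\mu}$ while the leading term is unchanged. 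Your sanity check via Theorems~\ref{thm_general_02_new_03}/\ref{thm_res_sublinear_new_03} is exactly the pattern the authors follow.
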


\begin{thm}[Sufficient conditions for almost sure convergence]\label{thm_general_02_new_044}
 Let Assumptions \ref{ass_stronglyconvex} and \ref{ass_smooth} hold. Then, we can conclude the statement of Theorem \ref{Hogwild:theorem_convergence} with the definition of the step size replaced by $0 < \eta_t=\frac{1}{LD\kappa(2+\beta)(k+t)}$ with $\kappa = \frac{L}{\mu}$.
 \end{thm}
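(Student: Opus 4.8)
The statement is the non-convex counterpart of Theorem~\ref{Hogwild:theorem_convergence}, so the plan is to take the proof of Theorem~\ref{Hogwild:theorem_convergence} (Appendix~\ref{subsec:convergence_Hogwild_wp1}) and change exactly one ingredient. That proof starts from the general recursion~\eqref{eqwM}, expands $\|w_{t+1}-w_*\|^2$ into $\|w_t-w_*\|^2$, a negative drift term proportional to $\eta_t[F(\hat w_t)-F(w_*)]$ (coming from the conditional expectation of the inner-product term, using strong convexity of $F$), a term of order $\eta_t^2\,\mathbb{E}[\|\nabla f(\hat w_t;\xi_t)\|^2]$ from the squared update (including the $d_{\xi_t}S^{\xi_t}_{u_t}$ factors, which only contribute powers of $D$), and delay/staleness cross-terms $\eta_t\sum_{j=t-\tau}^{t-1}\eta_j(\cdots)$ handled by Cauchy--Schwarz and $L$-smoothness~\eqref{eq:Lsmooth_basic}. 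In this chain, Assumption~\ref{ass_convex} is used \emph{only} to bound $\mathbb{E}[\|\nabla f(\hat w_t;\xi_t)\|^2]$ via Lemma~\ref{lem_bounded_secondmoment_04}, i.e.\ by $4L[F(\hat w_t)-F(w_*)]+N$.

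First I would replace Lemma~\ref{lem_bounded_secondmoment_04} by Lemma~\ref{lem_bounded_secondmoment_04_new}, which holds under Assumptions~\ref{ass_stronglyconvex} and~\ref{ass_smooth} alone and yields $\mathbb{E}[\|\nabla f(w;\xi)\|^2]\le 4L\kappa[F(w)-F(w_*)]+N$ with $\kappa=L/\mu$. Propagating this substitution, every occurrence of the constant $L$ that multiplies $[F(\hat w_t)-F(w_*)]$ in the per-iteration inequality picks up the extra factor $\kappa$. Re-imposing that the resulting coefficient of $[F(\hat w_t)-F(w_*)]$ be non-positive (so the drift term is usable in the supermartingale step), the admissibility constraint $\eta_t<\frac{1}{4LD}$ of Theorem~\ref{Hogwild:theorem_convergence} becomes $\eta_t<\frac{1}{4LD\kappa}$, and the explicit step size $\eta_t=\frac{1}{LD(2+\beta)(k+t)}$ is replaced by $\eta_t=\frac{1}{LD\kappa(2+\beta)(k+t)}$, exactly as claimed; the factor $D$ and the condition $k\ge 3\tau$ are unaffected, since they come from the filters $d_\xi S^\xi_u$ and from the delay accounting, neither of which involves convexity of $f(\cdot;\xi)$.

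With this single change the rest of the proof runs verbatim with $L\mapsto L\kappa$: the residual terms remain nonnegative, the Super Martingale Convergence Theorem applies and gives, w.p.1, that $\|w_t-w_*\|^2$ converges and $\sum_t\eta_t[F(\hat w_t)-F(w_*)]<\infty$ (here $\eta_t=\Theta(1/t)$ still gives $\sum_t\eta_t=\infty$ and $\sum_t\eta_t^2<\infty$); combining with $\sum_t\eta_t=\infty$ forces $\liminf_t[F(\hat w_t)-F(w_*)]=0$, hence $\liminf_t\|\hat w_t-w_*\|=0$ by~\eqref{eq:stronglyconvex}; and the staleness bound $\|\hat w_t-w_t\|\to 0$ proved in the same appendix (from $\eta_t\to 0$, bounded $\tau$, and square-summability) upgrades this to $\|w_t-w_*\|\to 0$ w.p.1. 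This is the same reduction by which Theorem~\ref{thm_general_02_new_03} followed from Theorem~\ref{thm_general_02_new_02}.

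The main obstacle is one of verification rather than of new ideas: one has to audit the proof of Theorem~\ref{Hogwild:theorem_convergence} and confirm that convexity of the component functions enters \emph{nowhere} except through Lemma~\ref{lem_bounded_secondmoment_04} --- in particular that the terms coupling $\nabla f(\hat w_t;\xi_t)$ with $\hat w_t-w_*$ and with the stale gradients $\nabla f(\hat w_j;\xi_j)$, $j\in\{t-\tau,\dots,t-1\}$, are bounded using only $L$-smoothness and Young's inequality plus strong convexity of $F$, and never the convexity of $f(\cdot;\xi)$ itself. Once that audit passes, the theorem follows from the coefficient substitution above.
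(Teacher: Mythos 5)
Your proposal matches the paper's own argument: the paper's Appendix~\ref{subsec_withoutconvex} establishes exactly this substitution --- convexity of the component functions enters the Hogwild! analysis only through Lemma~\ref{lem_bounded_secondmoment_04} (via (\ref{LN}) and (\ref{ineq:bounded_lemma3_hogwild})), and replacing it by Lemma~\ref{lem_bounded_secondmoment_04_new} turns $L$ into $L\kappa$ in the step-size constraint, after which the $m_t$ bound (which uses only $L$-smoothness) and the supermartingale argument in the proof of Theorem~\ref{Hogwild:theorem_convergence} go through unchanged. The only cosmetic difference is in the endgame, where the paper concludes from the retained drift term $\frac{\mu\eta_t}{2}\|w_t-w_*\|^2$ exactly as in the proof of Theorem~\ref{thm_general_02_new_02}, rather than routing through $\sum_t\eta_t[F(\hat w_t)-F(w_*)]<\infty$ as you do; both yield the same conclusion.
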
 

\section{Convergence of Large Stepsizes}
\label{sec:large_stepsize_convergence}

In \cite{RM1951}, the authors proved the convergence of SGD for step size sequences $\{\eta_t\}$ satisfying conditions
\begin{align*}
\sum_{t=0}^{\infty} \eta_t = \infty \ \text{and} \ \sum_{t=0}^{\infty} \eta_t^2 < \infty.
\end{align*}

In \cite{Bach_NIPS2011}, the authors studied the expected convergence rates for another class of step sizes of $\mathcal{O}(1/t^p)$ where $0< p\leq 1$. This class has many large step sizes in comparison with \cite{RM1951}. For example $\eta_t=\mathcal{O}(1/t^p)$ does not satisfy the second condition (i.e., $\sum_{t=0}^\infty \eta^2_t \rightarrow \infty$) where $0<p<1/2$. In this section, we prove that SGD will converge without using bounded gradient assumption if $\{\eta_t\}$ is a diminishing sequence and $\sum_{t=0}^\infty \eta_t \rightarrow \infty$. Compared to~\cite{Bach_NIPS2011}, we prove the convergence of SGD for step sizes $\eta_t=\Ocal(1/t^q)$ which is $1/\mu$ times larger. Our proposed class is much larger than the classes in \cite{RM1951} and ~\cite{Bach_NIPS2011}.

\subsection{Convergence of Large Stepsizes}
\label{subsec:convergence_largestepsize}
 
The proofs of all theorems and lemmas in this subsection are provided in Appendix~\ref{subsec:convergence_large_stepsize}.

\begin{thm}
\label{theore:SGD_convergence}
Let Assumptions \ref{ass_stronglyconvex}, \ref{ass_smooth}, and \ref{ass_convex} hold. Consider Algorithm \ref{sgd_algorithm} with a step size sequence such that: 
$\eta_t \leq \frac{1}{2L}$, $\eta_t \rightarrow 0$, $\frac{d}{dt}\eta_t \leq 0$ and $\sum_{t=0}^\infty \eta_t \rightarrow \infty$.
Then, $$\mathbb{E}[\| w_{t+1} - w_{*} \|^2 ] \rightarrow 0.$$ 
\end{thm}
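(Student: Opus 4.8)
The plan is to compress the stochasticity into a single scalar recursion and then invoke a deterministic convergence lemma that, crucially, does \emph{not} require $\sum_t \eta_t^2<\infty$.

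\textbf{Step 1: one-step inequality.} Starting from $w_{t+1}=w_t-\eta_t\nabla f(w_t;\xi_t)$, I would expand $\|w_{t+1}-w_*\|^2$ and take the conditional expectation over $\xi_t$ given $\mathcal{F}_t=\sigma(\xi_0,\dots,\xi_{t-1})$, using $\mathbb{E}[\nabla f(w_t;\xi_t)\mid\mathcal{F}_t]=\nabla F(w_t)$. For the inner-product term I would apply Assumption~\ref{ass_stronglyconvex} at $(w,w')=(w_*,w_t)$ together with $\nabla F(w_*)=0$, giving $\langle\nabla F(w_t),w_t-w_*\rangle\ge F(w_t)-F(w_*)+\tfrac{\mu}{2}\|w_t-w_*\|^2$; for the squared-norm term I would apply Lemma~\ref{lem_bounded_secondmoment_04}, $\mathbb{E}[\|\nabla f(w_t;\xi_t)\|^2\mid\mathcal{F}_t]\le 4L[F(w_t)-F(w_*)]+N$. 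Collecting terms,
\[
\mathbb{E}[\|w_{t+1}-w_*\|^2\mid\mathcal{F}_t]\le(1-\mu\eta_t)\|w_t-w_*\|^2-2\eta_t(1-2L\eta_t)[F(w_t)-F(w_*)]+N\eta_t^2 .
\]
Since $\eta_t\le\tfrac{1}{2L}$ makes $1-2L\eta_t\ge0$ and $F(w_t)\ge F(w_*)$, the middle term is nonpositive and can be discarded. Taking total expectation and writing $Y_t:=\mathbb{E}[\|w_t-w_*\|^2]$ (finite by induction, since $Y_0=\|w_0-w_*\|^2<\infty$ and $N<\infty$) yields the scalar recursion $Y_{t+1}\le(1-\mu\eta_t)Y_t+N\eta_t^2$.

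\textbf{Step 2: the deterministic lemma.} I would prove and apply the following: if $Y_t\ge0$, $a_t\in[0,1)$, $b_t\ge0$, $\sum_t a_t=\infty$, $b_t/a_t\to0$ and $Y_{t+1}\le(1-a_t)Y_t+b_t$, then $Y_t\to0$. Proof sketch: given $\epsilon>0$ choose $T$ with $b_t\le\epsilon a_t$ for $t\ge T$; then $Y_{t+1}-\epsilon\le(1-a_t)(Y_t-\epsilon)$, hence $Y_{t+1}-\epsilon\le(Y_T-\epsilon)\prod_{s=T}^{t}(1-a_s)$ (the sign of $Y_T-\epsilon$ is irrelevant because each factor $1-a_s\ge0$), and $\prod_{s=T}^{t}(1-a_s)\le\exp(-\sum_{s=T}^{t}a_s)\to0$; thus $\limsup_t Y_t\le\epsilon$, and letting $\epsilon\downarrow0$ gives $Y_t\to0$. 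Here $a_t=\mu\eta_t$, which lies in $[0,\tfrac12]$ because $\mu\le L$ (every $\mu$-strongly convex, $L$-smooth function satisfies $\mu\le L$) and $\eta_t\le\tfrac{1}{2L}$; $\sum_t a_t=\mu\sum_t\eta_t=\infty$ by hypothesis; and $b_t/a_t=(N/\mu)\eta_t\to0$ precisely because $\eta_t\to0$. Therefore $Y_t\to0$, i.e. $\mathbb{E}[\|w_{t+1}-w_*\|^2]\to0$.

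\textbf{Where the real obstacle is.} There is no delicate estimate; the entire content sits in Step~2, where one must recognize that square-summability of $\{\eta_t\}$ is unnecessary. The strong-convexity contraction factor $1-\mu\eta_t$ damps accumulated noise, and the only requirement is that the per-step noise $N\eta_t^2$ be small relative to the contraction $\mu\eta_t$, i.e. $\eta_t\to0$ — which is exactly the hypothesis. The monotonicity $\tfrac{d}{dt}\eta_t\le0$ is used only to make ``diminishing'' precise and would become genuinely useful if one wanted an explicit $O(\cdot)$ rate (unrolling $Y_{t+1}\le\prod_{s=0}^{t}(1-\mu\eta_s)Y_0+N\sum_{s=0}^{t}\eta_s^2\prod_{r=s+1}^{t}(1-\mu\eta_r)$ and bounding the tail of the last sum via $\eta_s\le\eta_{\lceil t/2\rceil}$), but it is not needed for convergence alone. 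So the care required is entirely in formulating the abstract lemma correctly — in particular verifying $1-\mu\eta_t\ge0$ and using the condition $b_t/a_t\to0$ in place of $\sum_t b_t<\infty$.
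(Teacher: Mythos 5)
Your proof is correct, but the key convergence step takes a genuinely different route from the paper's. Your Step 1 coincides with the paper's derivation of the recursion $Y_{t+1}\le(1-\mu\eta_t)Y_t+N\eta_t^2$ (this is exactly inequality \eqref{main_ineq_sgd_new02}). From there the paper fully unrolls the recursion via Lemma \ref{lemma:hogwild_recursive_form}, converts the products and sums into integrals using $1-x\le e^{-x}$ and the monotonicity of $n(t)=\mu\eta_t$, and reduces everything to showing that $C(t)=\exp(-M(t))\int_0^t\exp(M(x))n(x)^2\,dx\to0$, which it establishes by splitting the integral at a point $y$ where $n(y)\le\epsilon/2$. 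You instead invoke a self-contained deterministic lemma of Chung/Robbins--Siegmund type: if $Y_{t+1}\le(1-a_t)Y_t+b_t$ with $a_t\in[0,1)$, $\sum_t a_t=\infty$ and $b_t/a_t\to0$, then $Y_t\to0$; the $\epsilon$-shift argument $Y_{t+1}-\epsilon\le(1-a_t)(Y_t-\epsilon)$ is sound (the chaining is legitimate because each factor $1-a_s\ge0$, and $\mu\eta_t\le\mu/(2L)\le1/2$ since $\mu\le L$). Your route is more elementary and, as you correctly observe, dispenses with the hypothesis $\frac{d}{dt}\eta_t\le0$ altogether, which the paper needs only to compare sums with integrals. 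What the paper's heavier machinery buys is reuse: the function $C(t)$ and the integral representation are the basis for the explicit rate in Theorem \ref{theorem:convergence_rate}, for Lemma \ref{lem:Ct}, and for the batch-mode extension in Theorem \ref{theo:asfasasda}, none of which fall out of the abstract lemma. For the convergence statement alone, your argument is complete and arguably cleaner.
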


Theorem~\ref{theore:SGD_convergence} only discusses about the convergence of SGD for the given step size sequence $\{\eta_t\}$ above. The expected convergence rate of SGD with the setup in Theorem~~\ref{theore:SGD_convergence} is analysed in Theorem~\ref{theorem:convergence_rate}.  

\begin{thm}\label{theorem:convergence_rate}
Let Assumptions \ref{ass_stronglyconvex}, \ref{ass_smooth}, and \ref{ass_convex} hold. Consider Algorithm \ref{sgd_algorithm} with a step size sequence such that $\eta_t \leq \frac{1}{2L}$, $\eta_t \rightarrow 0$, $\frac{d}{dt}\eta_t \leq 0$, and $\sum_{t=0}^\infty \eta_t \rightarrow \infty$.
Then,
\begin{align*}
    \mathbb{E}[\| w_{t+1} - w_{*} \|^2 ] & \leq N\exp(n(0)) 2n(M^{-1}(\ln [ \frac{n(t+1)}{n(0)} ]+ M(t+1))) \\
        & \quad + \exp(-M(t+1)) [\exp(M(1))n^2(0)  N  + \mathbb{E}[\| w_{0} - w_{*} \|^2 ]],
\end{align*}
where $n(t)=\mu \eta_t$ and $M(t)=\int_{x=0}^t n(x)dx$. 
\end{thm}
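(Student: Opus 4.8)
The plan is to derive a one‑step recursion for $a_t \eqdef \mathbb{E}[\|w_t - w_*\|^2]$ and then solve it by a continuous (integrating‑factor) approximation, which is exactly what the statement of Theorem~\ref{theorem:convergence_rate} hints at through the quantities $n(t) = \mu\eta_t$ and $M(t) = \int_0^t n(x)\,dx$. First I would expand
\begin{align*}
\|w_{t+1} - w_*\|^2 = \|w_t - w_*\|^2 - 2\eta_t \langle \nabla f(w_t;\xi_t), w_t - w_*\rangle + \eta_t^2 \|\nabla f(w_t;\xi_t)\|^2,
\end{align*}
take conditional expectation over $\xi_t$, use unbiasedness to turn the cross term into $-2\eta_t\langle \nabla F(w_t), w_t - w_*\rangle$, bound that below by strong convexity (Assumption~\ref{ass_stronglyconvex}), namely $\langle \nabla F(w_t), w_t - w_*\rangle \geq F(w_t) - F(w_*) + \tfrac{\mu}{2}\|w_t - w_*\|^2$, and bound the second‑moment term above using Lemma~\ref{lem_bounded_secondmoment_04}: $\mathbb{E}[\|\nabla f(w_t;\xi_t)\|^2] \le 4L[F(w_t)-F(w_*)] + N$. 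Combining and taking full expectation gives
\begin{align*}
a_{t+1} \le (1 - \mu\eta_t) a_t - 2\eta_t(1 - 2L\eta_t)\,\mathbb{E}[F(w_t)-F(w_*)] + N\eta_t^2 \le (1 - \mu\eta_t) a_t + N\eta_t^2,
\end{align*}
where the middle term is dropped because $\eta_t \le \tfrac{1}{2L}$ makes $1 - 2L\eta_t \ge 0$ and $F(w_t) - F(w_*) \ge 0$. This is the same recursion underlying Theorems~\ref{thm_general_02_new_02} and~\ref{thm_res_sublinear_new_02}, so I would cite those derivations rather than repeat them.

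Next I would unroll the recursion $a_{t+1} \le (1-n(t))a_t + N\eta_t^2$ exactly, obtaining
\begin{align*}
a_{t+1} \le \left(\prod_{i=0}^{t}(1-n(i))\right) a_0 + N\sum_{i=0}^{t}\eta_i^2 \prod_{j=i+1}^{t}(1-n(j)).
\end{align*}
The standard trick is $1 - n(i) \le \exp(-n(i))$, so $\prod_{j=i+1}^{t}(1-n(j)) \le \exp(-\sum_{j=i+1}^t n(j))$, and then compare the discrete sum $\sum_{j=i+1}^t n(j)$ with the integral $M(t+1) - M(i+1)$; because $n$ is nonincreasing (this is where $\tfrac{d}{dt}\eta_t \le 0$ enters) one has the clean sandwich $M(t+1) - M(i) \le \sum_{j=i}^{t} n(j)$ and similar bounds, letting us replace sums by $M$ at the cost of a factor $\exp(M(1))$ and an extra $n(0)$ from the endpoints. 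The first term then becomes $\exp(-M(t+1))\exp(M(1)) \cdot (\text{stuff}) \cdot a_0$, matching the second line of the claimed bound with $a_0 = \mathbb{E}[\|w_0-w_*\|^2]$; I would also need to account for the $i=0$ contribution to the sum separately, which is the source of the $\exp(M(1)) n^2(0) N$ piece.

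The genuinely delicate part — and the step I expect to be the main obstacle — is bounding the sum $N\sum_{i=0}^{t}\eta_i^2 \exp(-(M(t+1)-M(i+1)))$ and showing it equals $N\exp(n(0))\cdot 2n\big(M^{-1}(\ln[n(t+1)/n(0)] + M(t+1))\big)$. Here $M^{-1}$ is the inverse of the (strictly increasing, since $\eta_t>0$) function $M$; the appearance of $M^{-1}(\cdots)$ strongly suggests one splits the index range at the point $i^* = M^{-1}(M(t+1) - \ln[n(t+1)/n(0)])$ — equivalently where $\exp(-(M(t+1)-M(i^*))) \approx n(t+1)/n(0)$ — handling $i \le i^*$ by bounding $\eta_i^2 \le \eta_{i^*}\eta_0$ and pulling $\eta_0 = n(0)/\mu$ out while the exponential sum telescopes against $\int \exp(M) \, dM$, and handling $i > i^*$ by bounding the exponential factor by $1$ and using $\sum \eta_i^2 \le \eta_{i^*}\sum_{i>i^*}\eta_i \le \eta_{i^*}\cdot(\text{something controlled by }M)$. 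Making these two halves combine into the stated closed form, with the precise constant $\exp(n(0))$ and the factor $2n(\cdot)$, will require carefully tracking the continuous‑approximation error terms (each discrete‑to‑integral passage costs a bounded multiplicative factor because $n$ is monotone and $n(0) = \mu\eta_0 \le \mu/(2L)$ is bounded). Finally, to conclude the convergence corollary it suffices to observe $M(t+1)\to\infty$ (since $\sum \eta_t = \infty$) forces the second line to $0$, and $M^{-1}(\ln[n(t+1)/n(0)] + M(t+1)) \to \infty$ because $\eta_t \to 0$ makes $\ln[n(t+1)/n(0)] \to -\infty$ slower than $M(t+1)\to\infty$... actually one checks $\ln[n(t+1)/n(0)] + M(t+1) \to \infty$, hence its $M^{-1}$ image $\to\infty$, hence $n$ of it $\to 0$ since $n(t)\to 0$; that is the cleanest route to Theorem~\ref{theore:SGD_convergence} from Theorem~\ref{theorem:convergence_rate}.
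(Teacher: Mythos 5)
Your first two stages match the paper's proof essentially verbatim: the one-step recursion $a_{t+1}\le(1-\mu\eta_t)a_t+N\eta_t^2$, the exact unrolling, the passage $1-n(j)\le\exp(-n(j))$, and the sum-to-integral comparison $\sum_{j=i+1}^t n(j)\ge\int_{i+1}^{t+1}n(x)\,dx$ via monotonicity of $n$ are exactly what the paper does, and your account of where the factors $\exp(n(0))$ and $\exp(M(1))n(0)^2N$ come from (endpoint corrections in the discrete-to-continuous passage, using $M(x+1)\le M(x)+n(0)$) is correct in spirit. The paper then reduces everything to bounding $C(t)=\exp(-M(t))\int_0^t\exp(M(x))n(x)^2\,dx$ by $2n\bigl(M^{-1}(\ln[n(t)/n(0)]+M(t))\bigr)$, which is the step you flag as the main obstacle.

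There is a genuine gap in your proposed treatment of the tail $i>i^*$. Bounding the exponential weight $\exp(-(M(t+1)-M(i+1)))$ by $1$ and then estimating $\sum_{i>i^*}\eta_i^2\le\eta_{i^*}\sum_{i>i^*}\eta_i$ cannot work: $\sum_{i=i^*}^{t}\eta_i\approx(M(t+1)-M(i^*))/\mu=\ln[n(0)/n(t+1)]/\mu$, which diverges as $t\to\infty$ for the step sizes in scope (e.g.\ $\eta_t=1/(2L+t)$), so this route produces $n(i^*)$ times an unbounded logarithmic factor rather than the claimed $2n(i^*)$. The correct move --- the one the paper makes --- is to \emph{keep} the exponential weight in the tail, bound only one factor of $n$ by its value at the split point, $n(x)^2\le n(y)\,n(x)$ for $x\ge y$, and telescope using $\exp(M(x))n(x)=\frac{d}{dx}\exp(M(x))$, giving $\exp(-M(t))\int_y^t\exp(M(x))n(x)^2\,dx\le n(y)\bigl[1-\exp(M(y)-M(t))\bigr]\le n(y)$. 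The head $[0,y]$ is handled as you describe (bound one factor by $n(0)$ and telescope to get $n(0)[\exp(M(y))-1]$), and $y$ is chosen as the largest point for which this head contribution is $\le n(y)$; relaxing $n(y)$ to $n(t)$ and dropping the $-1$ shows $y\ge M^{-1}(\ln[n(t)/n(0)]+M(t))$, whence $C(t)\le 2n(y)\le 2n(M^{-1}(\cdots))$ because $n$ is decreasing. Note also the sign slip in your formula for the split point: your own condition $\exp(-(M(t+1)-M(i^*)))\approx n(t+1)/n(0)$ gives $M(i^*)=M(t+1)+\ln[n(t+1)/n(0)]$, not $M(t+1)-\ln[n(t+1)/n(0)]$.
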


The upper bound in Theorem~\ref{theorem:convergence_rate} can be interpreted as being approximately equivalent to $\mathbb{E}[\| w_{t} - w_{*} \|^2 ] \leq U n(t-\Delta) + V \exp(-M(t))$ where 
$U=N\exp(n(0))$, $V=\exp(M(1))n^2(0)  N  + \mathbb{E}[\| w_{0} - w_{*} \|^2 ]$ and $\Delta$ is a delay computed from $\ln [ \frac{n(t+1)}{n(0)} ]$. Since $n(t)$ decreases and $M(t)$ increases when $t$ approaches to infinity, $\mathbb{E}[\| w_{t} - w_{*} \|^2 ]]$ decreases in the same way as $n(t)$ to $0$, except for some delay $\Delta$.

As shown in ~\eqref{eq:Yt_Ct_nt} (see also   Appendix~\ref{subsec:convergence_large_stepsize}), we have 
$$
\mathbb{E}[\| w_{t} - w_{*} \|^2 ] \leq A C(t) + B \exp(-M(t)),  
$$ 
where $A$ and $B$ are constants and $C(t)$ is defined in~(\ref{eq:Ct}) below. We show that an alternative proof for the convergence of SGD with the setup above based on the study of $C(t)$ can be developed.

\begin{lem}
\label{lem:Ct}
Let 
\begin{equation}
\label{eq:Ct}
C(t) = \exp(-M(t)) \int_{x=0}^t \exp(M(x)) n(x)^2dx,
\end{equation}
where $\frac{d}{dx}M(x)=n(x)$ with function $n(x)$ satisfying the following conditions:
\begin{enumerate}
\item $\frac{d}{dx}n(x) < 0$,
\item $\frac{d}{dx}n(x)$ is continuous.
\end{enumerate}
Then, there is a moment $T$ such that for all $t>T$, $C(t)>n(t)$.
\end{lem}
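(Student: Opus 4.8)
The plan is to analyze the ratio $C(t)/n(t)$ and show it exceeds $1$ eventually. The main idea is that $C(t)$ satisfies a simple first-order linear ODE, so its behavior can be understood directly. Differentiating $C(t) = \exp(-M(t)) \int_{x=0}^t \exp(M(x)) n(x)^2\,dx$ and using $M'(t) = n(t)$, I get
$$
C'(t) = -n(t) C(t) + n(t)^2.
$$
Equivalently, $C'(t) = n(t)\bigl(n(t) - C(t)\bigr)$. So the sign of $C'(t)$ is governed by the comparison between $n(t)$ and $C(t)$: whenever $C(t) < n(t)$, $C$ is increasing, and whenever $C(t) > n(t)$ it is decreasing. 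I would like to conclude that $C(t)$ is ``trapped near $n(t)$ from below'' only up to some finite time, after which it overtakes $n(t)$.

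The cleanest route is to consider the auxiliary function $g(t) = C(t) - n(t)$ and show $g(t) > 0$ for all large $t$. Suppose, for contradiction, that $g(t) \le 0$ for all $t$ (if $g$ is eventually positive we are done; the case where $g$ oscillates around $0$ I will handle by the same trapping argument below). Then $C(t) \le n(t)$ for all $t$, so from $C'(t) = n(t)(n(t) - C(t)) \ge 0$, $C$ is nondecreasing, hence $C(t) \to C_\infty \in (0,\infty]$ as $t \to \infty$; but since $C(t) \le n(t)$ and $n(t) \to 0$ (as $n$ is strictly decreasing with negative continuous derivative — here I would note that $n$ decreasing and bounded below by $0$, which follows since it bounds the nonnegative $C$, forces $n(t)\to \ell \ge 0$, and I'll argue $\ell = 0$), we would need $C_\infty \le \ell$. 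I would then extract a quantitative contradiction: rewrite $(e^{M(t)} C(t))' = e^{M(t)} n(t)^2$ and compare growth rates. Since $n$ is decreasing, for $t$ in a window $[t/2, t]$ we have $n(x)^2 \ge n(t)^2$, giving $e^{M(t)}C(t) \ge \int_{t/2}^t e^{M(x)} n(x)^2\,dx \ge n(t)^2 \int_{t/2}^t e^{M(x)}\,dx$, and since $M' = n > 0$, $\int_{t/2}^t e^{M(x)}\,dx \ge$ something comparable to $e^{M(t)}/n(t)$ via the standard Laplace-type bound $\int^t e^{M} \ge e^{M(t)}/n(t) - e^{M(t/2)}/n(t/2)$ obtained from integrating $(e^{M}/n)' = e^{M} + e^{M} (-n'/n^2) \ge e^{M}$. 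This yields $C(t) \gtrsim n(t)$, and tracking constants carefully shows $C(t)/n(t) \to$ a limit $\ge 1$, in fact eventually $> 1$, contradicting $C(t) \le n(t)$.

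I expect the main obstacle to be making the ``Laplace-type'' comparison rigorous using only the two stated hypotheses ($n' < 0$ and $n'$ continuous), since these do not a priori give a rate for $n(t) \to 0$ nor even that $n(t) \to 0$ rather than to a positive constant. I would resolve the limit issue first: if $n(t) \to \ell > 0$, then $M(t) \sim \ell t$, and a direct computation of $C(t) = e^{-M(t)}\int_0^t e^{M(x)} n(x)^2\,dx \to \ell$ by L'Hôpital / dominated-ratio arguments, so $C(t)/n(t) \to 1$; one then needs the strictness of the approach, which comes from $C(t) < n(t)$ implying $C' > 0$ together with $n' < 0$, forcing $g' = C' - n' > 0$ wherever $g \le 0$, so $g$ cannot stay $\le 0$ — giving $C(t) > n(t)$ for all large $t$. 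This last observation, that $g \le 0 \Rightarrow g' > 0$ strictly (using $n' < 0$ strictly), is actually the crux and handles all cases uniformly: once $g$ becomes positive it could only return to $0$ by passing through a point where $g = 0$ and $g' \le 0$, which is impossible. So the real proof is short — the work is just ruling out $g(t) \le 0$ for all $t$, i.e. showing $g$ does become positive at least once, which I'd do by the integral growth estimate above or, more simply, by noting $g(0) = C(0) - n(0) = -n(0) < 0$ so $g' (0)> 0$ and $g$ increases as long as it is $\le 0$; since $C$ is then monotone increasing and bounded while $n$ strictly decreases, they must cross.
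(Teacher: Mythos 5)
Your approach is essentially the paper's: both start from the identity $C'(t)=n(t)\bigl(n(t)-C(t)\bigr)$ and argue (i) that $C$ must cross $n$ at some finite time and (ii) that once $C>n$ it can never return. For step (ii) your argument is cleaner than the paper's: the paper rules out a second touching point $y$ by Taylor-expanding $C$ and $n$ near $y$, whereas you note that $g=C-n$ satisfies $g'=C'-n'=-n\,g-n'>0$ wherever $g\le 0$ (strictly, because $n'<0$ strictly), so $g$ cannot re-enter $\{g\le 0\}$ after leaving it. That part of your proposal is correct and complete.

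The gap is in step (i), which you correctly locate but do not close. Your closing sentence --- ``$C$ is monotone increasing and bounded while $n$ strictly decreases, so they must cross'' --- is not a valid inference (an increasing bounded function and a decreasing function need not meet: consider $1-e^{-t}$ versus $1+e^{-t}$), and your Laplace-type estimate has its inequality reversed: from $(e^{M}/n)'=e^{M}(1-n'/n^{2})\ge e^{M}$ you obtain an \emph{upper} bound on $\int e^{M}$, not a lower bound. The usable identity is $(e^{M}(C-n))'=-e^{M}n'$, which gives $e^{M(t)}\bigl(C(t)-n(t)\bigr)=-n(0)+\int_{0}^{t}e^{M(x)}(-n'(x))\,dx$ and hence only $C(t)\ge n(t)-O(e^{-M(t)})$, never a strict crossing. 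In fact the gap cannot be closed from the two stated hypotheses alone: for $n(t)=1+e^{-10t}$ one has $M(x)\le x+\tfrac1{10}$, so $\int_{0}^{\infty}e^{M}(-n')\le e^{1/10}\cdot\tfrac{10}{9}<2=n(0)$, whence $e^{M(t)}(C(t)-n(t))<0$ for all $t$ and the lemma's conclusion fails. (The paper's own proof asserts the existence of the first crossover point with no more justification than you give, so it shares this defect.) The crossing does follow once one adds $n(t)\to 0$, which holds in the paper's application where $n(t)=\mu\eta_{t}$: then, exactly as in the first half of your contradiction argument, $C$ is nondecreasing with $C(t)\ge C(t_{0})>0$ for $t\ge t_{0}>0$, which is incompatible with $C\le n\to 0$. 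So to make your proof (and the statement) correct, add $n(t)\to 0$ as a hypothesis and keep only that branch of your case analysis; the $\ell>0$ branch cannot be salvaged.
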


\begin{proof}
We take the derivative of $C(t)$, i.e., 
\begin{align*}
\frac{d}{dt}C(t) &= - \exp(-M(t)) n(t) \int_{x=0}^t \exp(M(x)) n(x)^2dx + \exp(-M(t)) \exp(M(t)) n(t)^2\\
&=n(t) [ n(t) - C(t) ] \\
\end{align*}
This shows that
$$ C(t) \mbox{ is decreasing if and only if } C(t)>n(t).$$

Initially $C(0)=0$ and $n(0)>0$, hence, $C(t)$ starts increasing from $t\geq 0$. Since $n(t)$ decreases for all $t\geq 0$, we know that there must exist a first cross over point $x$:
\begin{itemize}
\item There exists a value $x$ such that $C(t)$ increases for $0\leq t<x$, and
\item $C(x)=n(x)$ with derivative $d C(t)/dt |_{t=x} =0$.
\end{itemize}

Since $n(x)$ has a derivative $<0$, we know that $C(t)>n(t)$ immediately after $x$.
Suppose that $C(y)=n(y)$ for some $y>x$ with $C(t)>n(t)$ for $x<t<y$.
This implies that $d C(t)/dt |_{t=y} =0$ and since $d C(t)/dt$ is continuous 
$$ C(y-\epsilon) = C(y) + O(\epsilon^2).$$
Also,
$$ n(y-\epsilon) = n(y) - \epsilon d n(t)/dt |_{t=y} + O(\epsilon^2).$$
Since $d n(t)/dt |_{t=y}<0$, we know that there exists an $\epsilon$ small enough (close to 0) such that
$$ C(y-\epsilon) < n(y-\epsilon).$$
This contradicts $C(t)>n(t)$ for $x<t<y$. We conclude that there does not exist a $y>x$ such that $C(y)=n(y)$:
\begin{itemize}
\item For $t>x$, $C(t)>n(t)$ and $C(t)$ is strictly decreasing.
\end{itemize}

We conclude that for any given $n(t)$, there exists a time $T$ such that $C(t)<n(t)$ for all $t \in [0,T)$, $C(T)=n(T)$ after which $C(t)>n(t)$ when $t\in (T,\infty]$. Note that $C(t)$ is always bigger then zero. 
\end{proof}

As proved above, $C(t)$ decreases for $t>T$. In addition to this we note that $C(t)$ converges to zero when $t$ goes to infinity (see the proof of Theorem~\ref{theore:SGD_convergence} in Appendix~\ref{subsec:convergence_large_stepsize}).  
%
In addition to $C(t)\rightarrow 0$ when $t\rightarrow \infty$, also 
$\exp(-M(t)) \rightarrow 0$ when $t\rightarrow \infty$ because $\sum_{t=0}^\infty n(t) \rightarrow \infty$. Based on these two results we conclude that $\mathbb{E}[\| w_{t} - w_{*} \|^2 ] \rightarrow 0$ when $t\rightarrow \infty$. This is an alternative proof for the convergence of SGD as shown in Theorem~\ref{theore:SGD_convergence}.

\begin{thm}
\label{theorem:fastest_convergence}
Among all stepsizes $\eta_{q,t}=1/(K+t)^q$ where $q>0$, $K$ is a constant such that $\eta_{q,t}\leq \frac{1}{2L}$, SGD algorithm enjoys the fastest convergence with stepsize $\eta_{1,t}=1/(2L+t)$. 
\end{thm}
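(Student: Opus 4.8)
The plan is to extract the convergence rate from the master bound
$\mathbb{E}[\|w_t-w_*\|^2]\le A\,C(t)+B\exp(-M(t))$ of \eqref{eq:Yt_Ct_nt}, where $n(t)=\mu\eta_t$, $M(t)=\int_0^t n(x)\,dx$, and $C(t)$ is as in \eqref{eq:Ct}, and then to compare its decay over the family $\eta_{q,t}=(K+t)^{-q}$. First I would rule out $q>1$: then $\sum_t\eta_{q,t}<\infty$, so $M(\infty)<\infty$, the term $B\exp(-M(t))$ does not vanish, and the bound certifies no convergence; since $\sum_t\eta_t=\infty$ is anyway necessary for SGD to reach $w_*$ in general, no $q>1$ schedule can be the fastest. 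Hence it suffices to compare $q\in(0,1]$, for which the hypotheses of Theorem~\ref{theorem:convergence_rate} ($\eta_t\le\frac1{2L}$, $\eta_t$ decreasing to $0$, $\sum_t\eta_t=\infty$) all hold once $K$ is chosen with $K^{-q}\le\frac1{2L}$.

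Next I would compute $M$ and the asymptotics of $C$ for each such $q$. For $0<q<1$, $M(t)=\frac{\mu}{1-q}\bigl[(K+t)^{1-q}-K^{1-q}\bigr]$, so $\exp(-M(t))$ decays faster than any power of $t$ and is negligible; integrating \eqref{eq:Ct} by parts via the identity $\frac{d}{dx}\exp(M(x))=n(x)\exp(M(x))$ already used in the proof of Lemma~\ref{lem:Ct} gives $C(t)=n(t)+O\bigl((K+t)^{-1}\bigr)=\Theta\bigl((K+t)^{-q}\bigr)$, i.e.\ a rate of order $t^{-q}$. For $q=1$, $M(t)=\mu\ln\frac{K+t}{K}$, so $\exp(-M(t))=\bigl(\tfrac{K}{K+t}\bigr)^{\mu}$, and a direct evaluation of \eqref{eq:Ct} yields $C(t)=\Theta\bigl((K+t)^{-1}\bigr)$; the two contributions combine to a rate of order $t^{-1}$ (with a logarithmic correction in the borderline case determined by $\mu$). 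Comparing exponents, $t^{-1}$ beats $t^{-q}$ for every $q\in(0,1)$, so within the family the exponent is optimized precisely at $q=1$.

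Within the sub-family $q=1$, the admissibility constraint $\eta_{1,t}=(K+t)^{-1}\le\frac1{2L}$ is exactly $K\ge 2L$. Here I would show that the bound is a non-increasing function of the step-size scale and is strictly smaller in its initial-error part at the largest admissible step size: $\exp(-M(t))=K^{\mu}(K+t)^{-\mu}$ is increasing in $K$ for every fixed $t$, so the dominant $\|w_0-w_*\|^2$ contribution to $B\exp(-M(t))$ is minimized at $K=2L$, while the coefficient $A$ (through $\exp(n(0))$), the remaining part of $B$ (through $\exp(M(1))\,n(0)^2$), and the leading part of $C(t)$ are no worse there; equivalently, a $K>2L$ schedule is just the $K=2L$ schedule with its first $K-2L$ iterations wasted on needlessly small steps. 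This identifies $\eta_{1,t}=1/(2L+t)$ as the fastest.

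I expect the main obstacle to be the sharp two-sided estimate of $C(t)$ for general $q$ — pinning down the leading order (and the logarithmic factor at $q=1$, where the precise constant depends on whether $\mu\ge 1$ or $\mu<1$) — since this requires either a Laplace-type analysis of $\int_0^t\exp(M(x))n(x)^2\,dx$ or an explicit closed form, and one must also check that for small $\mu$ the $\exp(-M(t))$ term in the $q=1$ bound does not overtake $C(t)$. Accordingly, the $q$-comparison is best carried out at the level of the $C(t)$-governed rate, matching the discussion after Lemma~\ref{lem:Ct} that $\mathbb{E}[\|w_t-w_*\|^2]$ decays ``in the same way as $n(t)$.''
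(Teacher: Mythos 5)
Your overall strategy---extract the decay rate of the bound $A\,C(t)+B\exp(-M(t))$ from \eqref{eq:Yt_Ct_nt} for each $q$ and compare exponents---is a legitimate and more explicit route than the paper's, which instead works from the identity $\frac{d}{dt}C(t)=n(t)\,[\,n(t)-C(t)\,]$ of Lemma~\ref{lem:Ct} together with a self-consistency argument (imposing $C(t)=2n(t)$ and solving the resulting ODE to get $n(t)=2/t$) to conclude $C_q(t)>C_1(t)$; your elimination of $q>1$ (where $M(\infty)<\infty$ and the $B\exp(-M(t))$ term never vanishes) is also something the paper's proof omits entirely. However, there is a genuine gap at the crux of your comparison: the claim that for $q=1$ a ``direct evaluation of \eqref{eq:Ct} yields $C(t)=\Theta\bigl((K+t)^{-1}\bigr)$'' is false unless $\mu>1$. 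With $n(x)=\mu/(K+x)$ one has $\exp(M(x))=((K+x)/K)^{\mu}$, hence for $\mu\neq 1$
\begin{equation*}
C(t)=\frac{\mu^2}{\mu-1}\Bigl[(K+t)^{-1}-K^{\mu-1}(K+t)^{-\mu}\Bigr],
\end{equation*}
which is $\Theta(t^{-1})$ only when $\mu>1$; for $\mu<1$ it is $\Theta(t^{-\mu})$ (and $\Theta(t^{-1}\ln t)$ at $\mu=1$), and the companion term $\exp(-M(t))=(K/(K+t))^{\mu}$ is likewise $\Theta(t^{-\mu})$. So for $\mu<1$ the $q=1$ bound decays like $t^{-\mu}$, and your step ``$t^{-1}$ beats $t^{-q}$ for every $q\in(0,1)$'' reverses: any $q\in(\mu,1)$ gives the strictly faster rate $t^{-q}$. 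This is not, as your closing paragraph suggests, a matter of ``the precise constant'' or ``a logarithmic correction''---it changes the polynomial exponent and breaks the claimed optimality of $q=1$ at the level of these upper bounds whenever $\mu<1$.

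To be fair, the paper's own proof has the same blind spot: its chain $\exp(-M(t))<\exp(-\int_{x=0}^{t}\mu\,x^{-1}dx)<1/t$ tacitly requires $\mu\geq 1$, and its ODE fixed point $n(t)=2/t$ corresponds to $\eta_t=2/(\mu t)$, which is not of the form $1/(K+t)$ unless $\mu=2$. So your proposal is no weaker than the published argument, but neither closes the case $\mu<1$; a clean version needs either the restriction $\mu\geq1$ or a step size $c/(K+t)$ with $c\mu>1$ (which is exactly why $\eta_t=\alpha/(\mu(t+E))$ with $\alpha\geq2$ is used elsewhere in the paper). Your optimization over $K$ within $q=1$ is fine in spirit, though note that $A=N\exp(n(0))$ and the leading part of $C(t)$ actually \emph{decrease} as $K$ grows; only the transient term $B\exp(-M(t))=\Theta\bigl(K^{\mu}(K+t)^{-\mu}\bigr)$ favors $K=2L$, and the leading asymptotics in $t$ are insensitive to $K$, so ``fastest'' within $q=1$ is really a statement about the initial-error term, as you indicate.
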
 
 
\subsection{Convergence of Large Stepsizes in Batch Mode}
\label{subsec:convergence_largestepsize_batchmode}

We define 
$$\mathcal{F}_t=\sigma(w_0,\xi'_0,u_0,\dotsc, \xi'_{t-1},u_{t-1}), $$
where
$$\xi'_i = (\xi_{i,1}, \ldots, \xi_{i,k_i}).$$

We consider the following general algorithm with the following gradient updating rule:

\begin{equation}
\label{eq:general_updating}
w_{t+1} = w_t -\eta_t d_{\xi'_{t}} S^{\xi'_t}_{u_t} \nabla f(w_t;\xi'_t), 
\end{equation} 

where $f(w_t;\xi'_t) = \frac{1}{k_t} \sum_{i=1}^{k_t} f(w_t;\xi_{t,i})$.

\begin{thm}
\label{theo:asfasasda}
Let Assumptions \ref{ass_stronglyconvex}, \ref{ass_smooth} and \ref{ass_convex} hold, $\{\eta_t\}$ is a diminishing sequence with conditions $\sum_{t=0}^{\infty} \eta_t \rightarrow \infty$ and $0< \eta_t \leq \frac{1}{2LD}$ for all $t\geq 0$. Then, the sequence $\{w_t\}$ converges to $w_*$ where
$$w_{t+1} = w_t -\eta_t d_{\xi'_{t}} S^{\xi'_t}_{u_t} \nabla f(w_t;\xi'_t).$$ 
\end{thm}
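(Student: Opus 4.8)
The plan is to reduce Theorem~\ref{theo:asfasasda} to the convergence analysis of SGD with large stepsizes (Theorem~\ref{theore:SGD_convergence} and the surrounding machinery around Lemma~\ref{lem:Ct}), by showing that the batch recursion~\eqref{eq:general_updating} produces exactly the same scalar recurrence for $\mathbb{E}[\|w_t-w_*\|^2]$ that drives that proof, only with the noise constant $N$ inflated by a factor $D$ and the admissible stepsize shrunk by the same factor $D$ (which is why the hypothesis reads $\eta_t\le\frac{1}{2LD}$ rather than $\frac{1}{2L}$).

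First I would observe that the batch realization $f(w;\xi'_t)=\frac{1}{k_t}\sum_i f(w;\xi_{t,i})$ is, as an average of convex $L$-smooth functions, again convex and $L$-smooth, so Lemma~\ref{lem_bounded_secondmoment_04} applies to it verbatim and gives $\mathbb{E}[\|\nabla f(w;\xi')\|^2]\le 4L[F(w)-F(w_*)]+N'$ with $N'=2\mathbb{E}[\|\nabla f(w_*;\xi')\|^2]\le N$ by Jensen's inequality. The step carrying the bulk of the new work is then to control the scaled, filtered direction $g_t\eqdef d_{\xi'_t}S^{\xi'_t}_{u_t}\nabla f(w_t;\xi'_t)$. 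Using the defining relation~\eqref{eq:SexpM}, namely $d_{\xi'_t}\mathbb{E}[S^{\xi'_t}_{u_t}\mid\mathcal{F}_t,\xi'_t]=D_{\xi'_t}$, together with the fact that $D_{\xi'_t}$ acts as the identity on $\nabla f(w_t;\xi'_t)$ (whose support is contained in that of $D_{\xi'_t}$), one obtains unbiasedness $\mathbb{E}[g_t\mid\mathcal{F}_t]=\nabla F(w_t)$ by averaging first over $u_t$ given $\xi'_t$ and then over the i.i.d.\ batch $\xi'_t$. For the second moment I would use that the matrices $\{S^\xi_u\}_u$ partition $D_\xi$ into $d_\xi\le D$ disjoint blocks, so $\sum_u\|S^\xi_u v\|^2=\|v\|^2$ for every $v$ supported on $D_\xi$; since $u_t$ is chosen uniformly among the $d_{\xi'_t}$ non-empty blocks this gives $\mathbb{E}[\|S^{\xi'_t}_{u_t}\nabla f(w_t;\xi'_t)\|^2\mid\mathcal{F}_t,\xi'_t]=\|\nabla f(w_t;\xi'_t)\|^2/d_{\xi'_t}$, and multiplying by $d_{\xi'_t}^2$ and bounding $d_{\xi'_t}\le D$ yields $\mathbb{E}[\|g_t\|^2\mid\mathcal{F}_t]\le D\bigl(4L[F(w_t)-F(w_*)]+N'\bigr)$.

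With these two facts in hand, expand $\|w_{t+1}-w_*\|^2=\|w_t-w_*\|^2-2\eta_t\langle g_t,w_t-w_*\rangle+\eta_t^2\|g_t\|^2$, take $\mathbb{E}[\cdot\mid\mathcal{F}_t]$, and apply $\mu$-strong convexity of $F$ in the form $\langle\nabla F(w_t),w_t-w_*\rangle\ge[F(w_t)-F(w_*)]+\frac{\mu}{2}\|w_t-w_*\|^2$. Collecting terms, the coefficient of $[F(w_t)-F(w_*)]$ equals $-2\eta_t(1-2LD\eta_t)\le 0$ precisely because $\eta_t\le\frac{1}{2LD}$, so that term can be dropped, leaving $\mathbb{E}[\|w_{t+1}-w_*\|^2\mid\mathcal{F}_t]\le(1-\mu\eta_t)\|w_t-w_*\|^2+DN'\eta_t^2$; here $0\le 1-\mu\eta_t<1$ since $\mu\le L$. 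Taking total expectations gives the scalar recursion $a_{t+1}\le(1-\mu\eta_t)a_t+DN\eta_t^2$ with $a_t=\mathbb{E}[\|w_t-w_*\|^2]$.

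This recursion has exactly the form analysed in the proof of Theorem~\ref{theore:SGD_convergence}: setting $n(t)=\mu\eta_t$, $M(t)=\int_0^t n(x)\,dx$ and $C(t)$ as in~\eqref{eq:Ct}, one gets $a_t\le A\,C(t)+B\exp(-M(t))$ for suitable constants $A,B$, and then Lemma~\ref{lem:Ct} together with $C(t)\to 0$ and $\exp(-M(t))\to 0$ (the latter because $\sum_t\eta_t\to\infty$ and the former as shown in the proof of Theorem~\ref{theore:SGD_convergence}) forces $a_t\to 0$, which is the claim. The only genuinely new difficulty is the middle step — the unbiasedness and $\mathbb{E}[\|g_t\|^2]\le D(\cdots)$ bound for the block-sampled update — where care is needed to condition on $\xi'_t$ before averaging out $u_t$, and then over the batch, and to exploit the disjointness of the supports of the $S^\xi_u$ matrices; everything downstream is a faithful reprise of the single-sample SGD argument with $N$ replaced by $DN$ and $\frac{1}{2L}$ replaced by $\frac{1}{2LD}$.
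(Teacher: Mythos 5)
Your argument is correct, and its first two thirds coincide with the paper's own route: the paper likewise establishes the batch analogue of Lemma~\ref{lem_bounded_secondmoment_04} and the unbiasedness/second-moment bounds for $d_{\xi'_t}S^{\xi'_t}_{u_t}\nabla f(w_t;\xi'_t)$ (Lemma~\ref{lemma:general_form111} and the batch version of Lemma~\ref{lem:expect} in Appendix~\ref{subsec:convergence_large_stepsize_batch_model}), and arrives at the same contraction step, except that it computes $\mathbb{E}[\|\nabla f(w_*;\xi'_t)\|^2]$ \emph{exactly} as $\mathbb{E}[\|\nabla f(w_*;\xi)\|^2]/k_t$ (the cross terms vanish because $\nabla F(w_*)=0$), so its recursion carries the noise term $\eta_t^2 ND/k_t$ where yours carries $\eta_t^2 ND$. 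The genuine divergence is in the last step. You discard the batch-size factor (via $N'\le N$ by Jensen, equivalently $1/k_t\le 1$) and then invoke the proof of Theorem~\ref{theore:SGD_convergence} verbatim; the paper instead keeps the $1/k(x)$ inside $C(t)=\exp(-M(t))\int_{0}^{t}\exp(M(x))\,n(x)^2/k(x)\,dx$ and performs a change of variables $y=K(x)$ with $K(t)=\int_{1}^{t}k(x)\,dx$, reparametrizing time by the cumulative number of gradient evaluations so as to reduce this modified $C$ to the one already analysed in Section~\ref{subsec:convergence_large_stepsize}. For the bare convergence claim of the theorem your shortcut is adequate and arguably cleaner; what the paper's extra work buys is a bound that retains the $1/k_t$ variance reduction and is expressed in the time scale of total gradient computations, which is what one needs to compare batch sizes at equal computational budget. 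Your handling of the block sampling (conditioning on $\xi'_t$ first, using disjointness of the supports of the $S^{\xi'}_u$ and $d_{\xi'}\le D$) matches the paper's computation, and the monotonicity of $\eta_t$ that your appeal to Lemma~\ref{lem:Ct} and to the integral comparisons in Theorem~\ref{theore:SGD_convergence}'s proof requires is supplied by the ``diminishing sequence'' hypothesis, so no gap remains.
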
 

The proof of Theorem~\ref{theo:asfasasda} is provided in Appendix~\ref{subsec:convergence_large_stepsize_batch_model}.
\section{Numerical Experiments}\label{sec_experiments}

For our numerical experiments, we consider the finite sum minimization problem in \eqref{main_prob}. 
We consider $\ell_2$-regularized logistic regression problems with
\begin{align*}
f_i(w) = \log(1 + \exp(-y_i \langle x_i, w\rangle )) +
\frac{\lambda}{2} \| w \|^2,
\end{align*}
where the penalty parameter $\lambda$ is set to $1/n$, a widely-used
value in  literature \cite{SAG}.
 
We conducted experiments on a single core for Algorithm \ref{HogWildAlgorithm} on two popular datasets \texttt{ijcnn1} ($n = 91, 701$ training data) and \texttt{covtype} ($n = 406,709$ training data) from the LIBSVM\footnote{http://www.csie.ntu.edu.tw/$\sim$cjlin/libsvmtools/datasets/} website. Since we are interested in the expected convergence rate with respect to the number of iterations, respectively number of single position vector updates, we do not need a parallelized multi-core simulation to confirm our analysis. The impact of efficient resource scheduling over multiple cores leads to a  performance improvement complementary to our analysis of (\ref{eqwM}) (which, as discussed, lends itself for an efficient parallelized implementation). We experimented with 10 runs and reported the average results. We choose the step size based on Theorem \ref{theorem:Hogwild_newnew1}, i.e, $\eta_t = \frac{4}{\mu(t+E)}$ and $E = \max\{ 2\tau, \frac{16 L D}{\mu}\}$. For each fraction $v\in \{1,3/4,2/3,1/2,1/3, 1/4\}$ we performed the following experiment: In Algorithm \ref{HogWildAlgorithm}  we  choose each ``filter'' matrix $S^{\xi_t}_{u_t}$ to correspond with a random subset of size $v|D_{\xi_t}|$ of the non-zero positions of $D_{\xi_t}$ (i.e., the support of the gradient corresponding to $\xi_t$). In addition we use $\tau=10$. For the two datasets, 
\begin{figure}[h]
 \centering
 \includegraphics[width=0.45\textwidth]{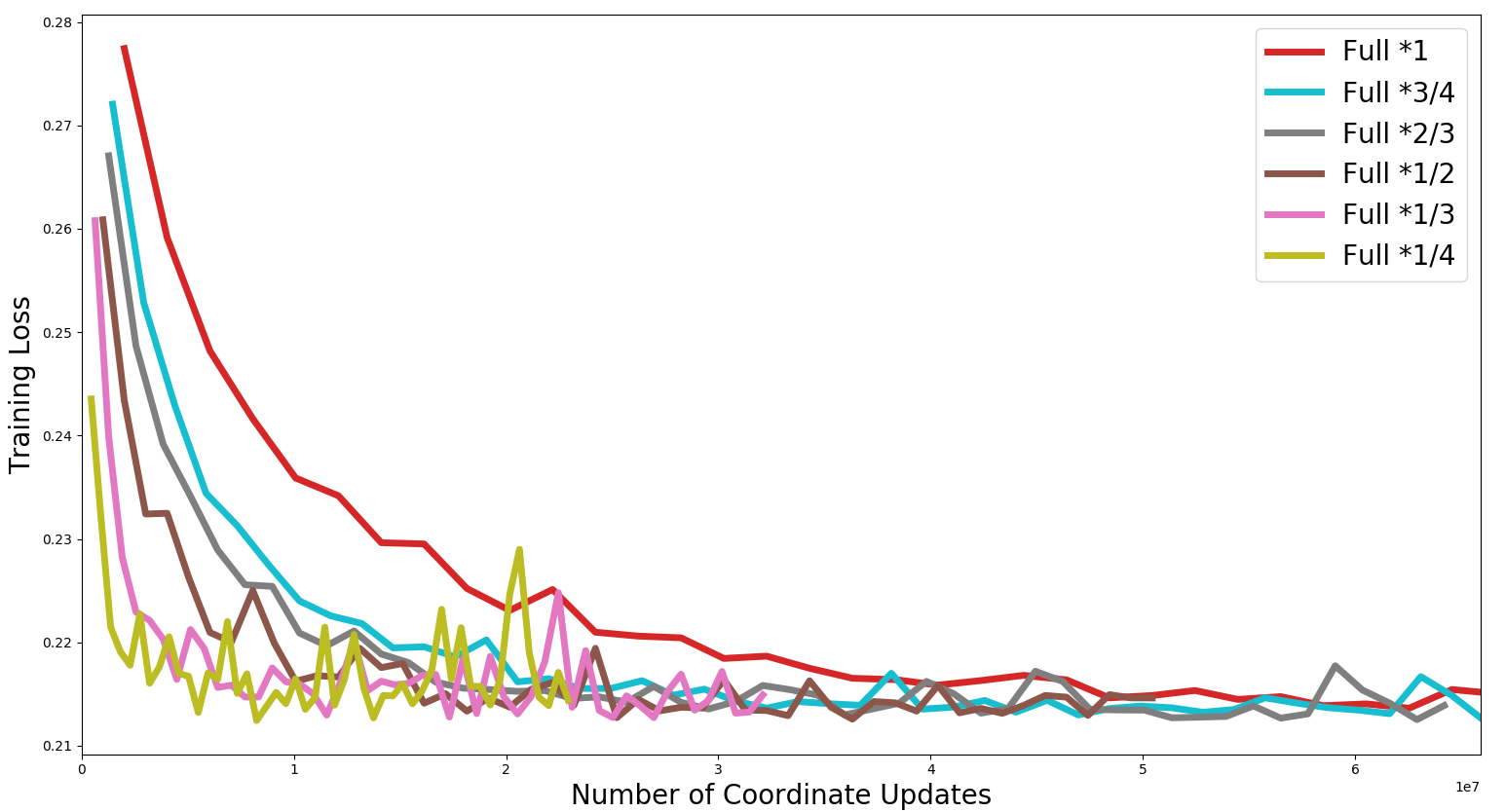}
 \includegraphics[width=0.45\textwidth]{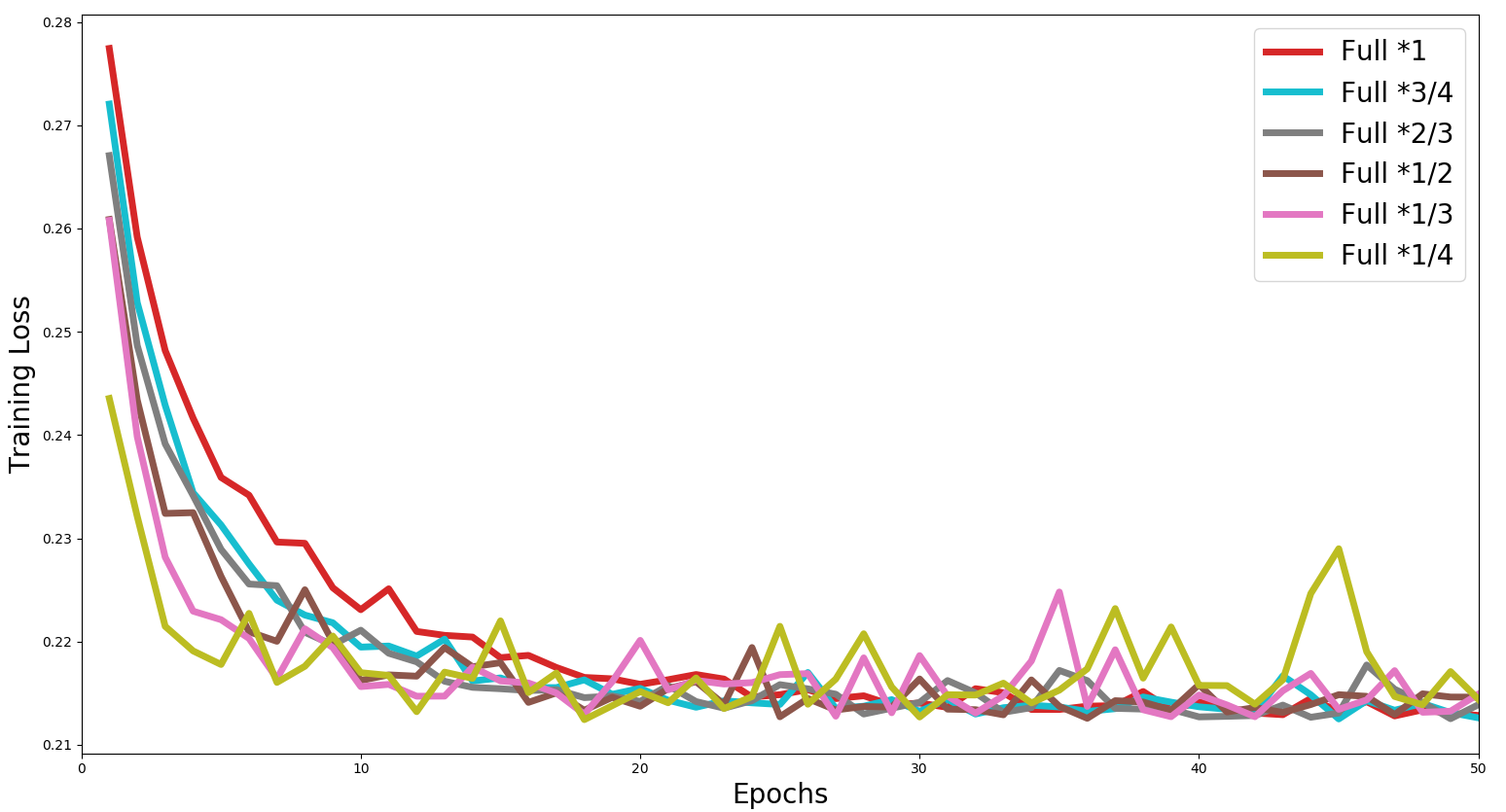}
   \caption{\textit{ijcnn1} for different fraction of non-zero set}
  \label{figure_01_a}
 \end{figure}
 
 \begin{figure}[h]
 \centering
 \includegraphics[width=0.45\textwidth]{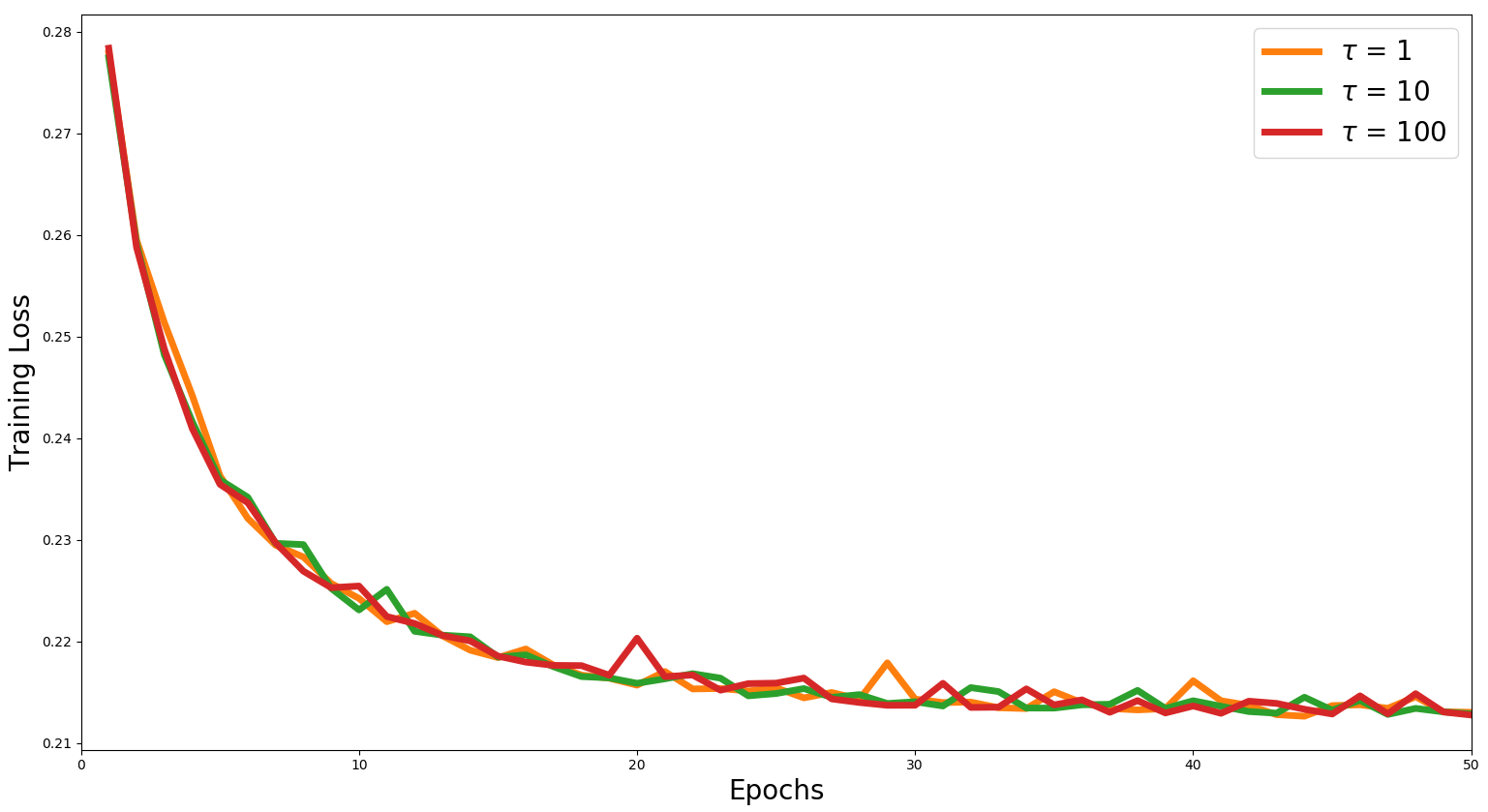} 
   \caption{\textit{ijcnn1} for different $\tau$ with the whole non-zero set}
  \label{figure_01_b}
 \end{figure}

Figures \ref{figure_01_a} and \ref{figure_02_a} plot the training loss for each fraction with $\tau=10$. The top plots have $t'$, the number of coordinate updates, for the horizontal axis. The bottom plots have the number of epochs, each epoch counting $n$ iterations, for the horizontal axis. The results show that each fraction shows a sublinear expected convergence rate of $O(1/t')$; the smaller fractions exhibit larger deviations but do seem to converge faster to the minimum solution.


In Figures \ref{figure_01_b} and \ref{figure_02_b}, we show experiments with different values of $\tau \in \{1, 10, 100\}$ where we use the whole non-zero set of gradient positions (i.e., $v=1$) for the update.  Our analysis states that, for $t= 50$ epochs times $n$ iterations per epoch, $\tau$ can be as large as $\sqrt{t\cdot L(t)}=524$ for \texttt{ijcnn1} and $1058$ for \texttt{covtype}. The experiments indeed show that $\tau\leq 100$ has little effect on the expected convergence rate.

\begin{figure}[h]
 \centering
 \includegraphics[width=0.45\textwidth]{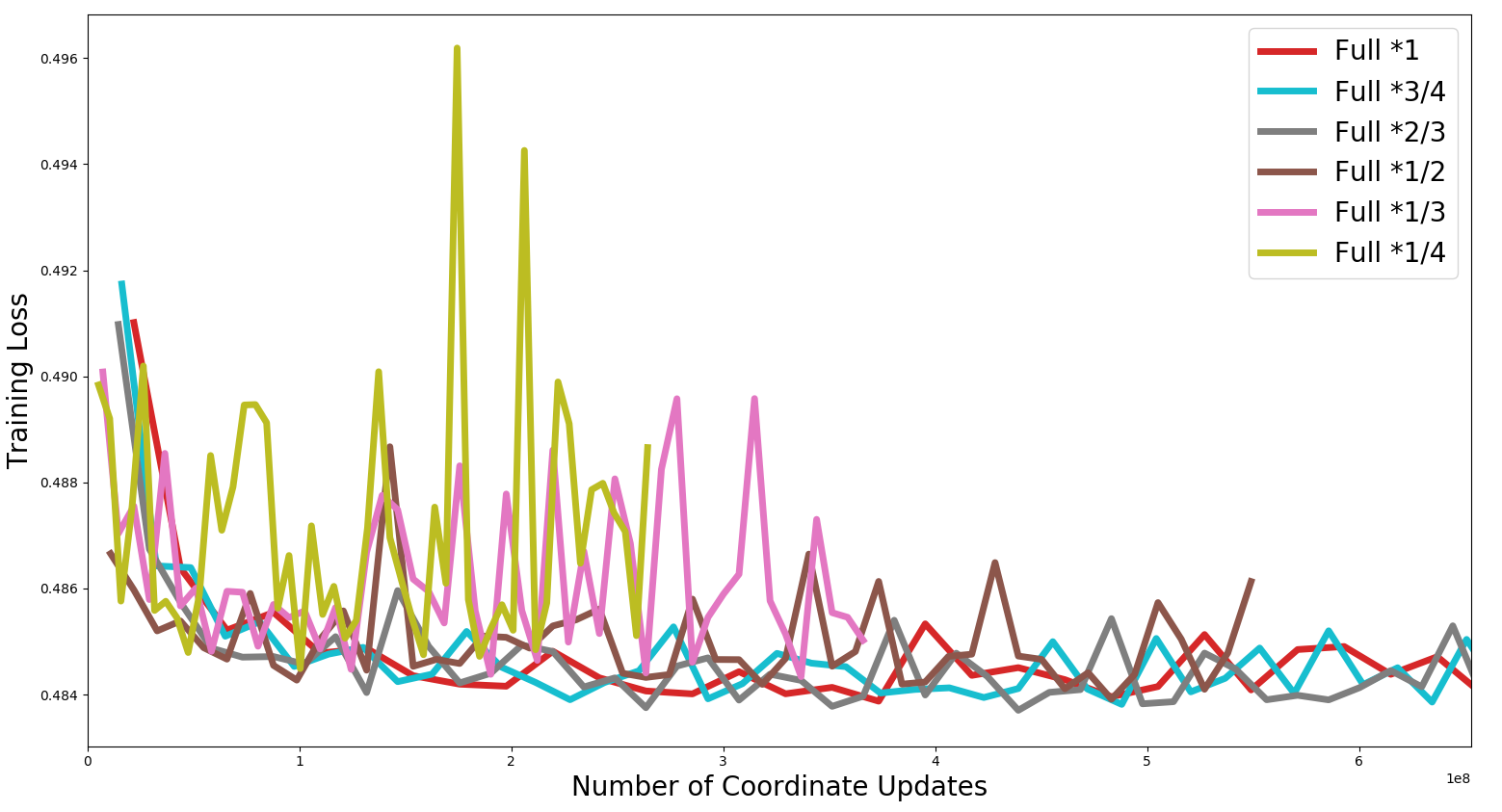}
 \includegraphics[width=0.45\textwidth]{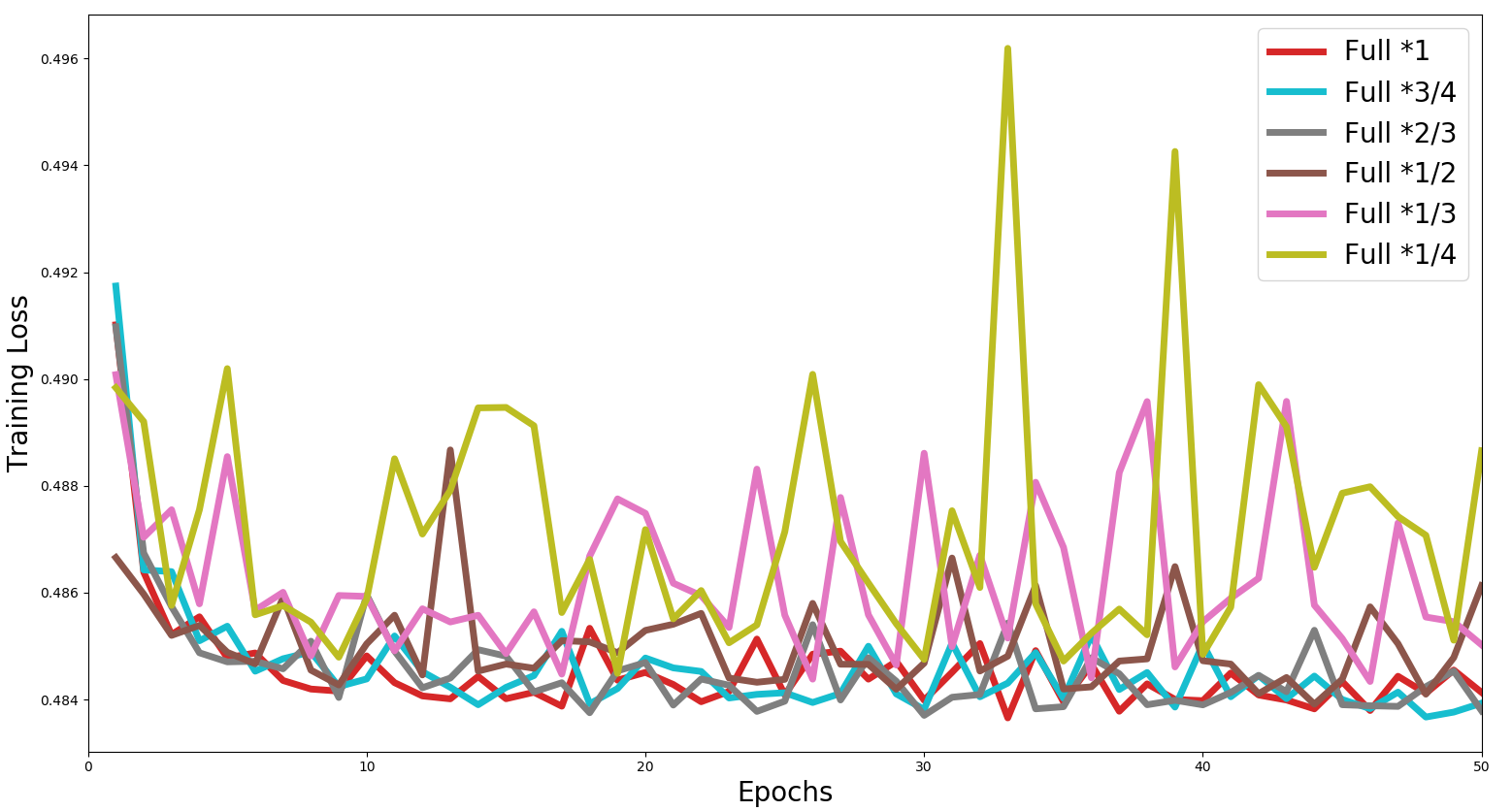}
   \caption{\textit{covtype} for different fraction of non-zero set}
  \label{figure_02_a}
 \end{figure}
 
 \begin{figure}[h]
 \centering
 \includegraphics[width=0.45\textwidth]{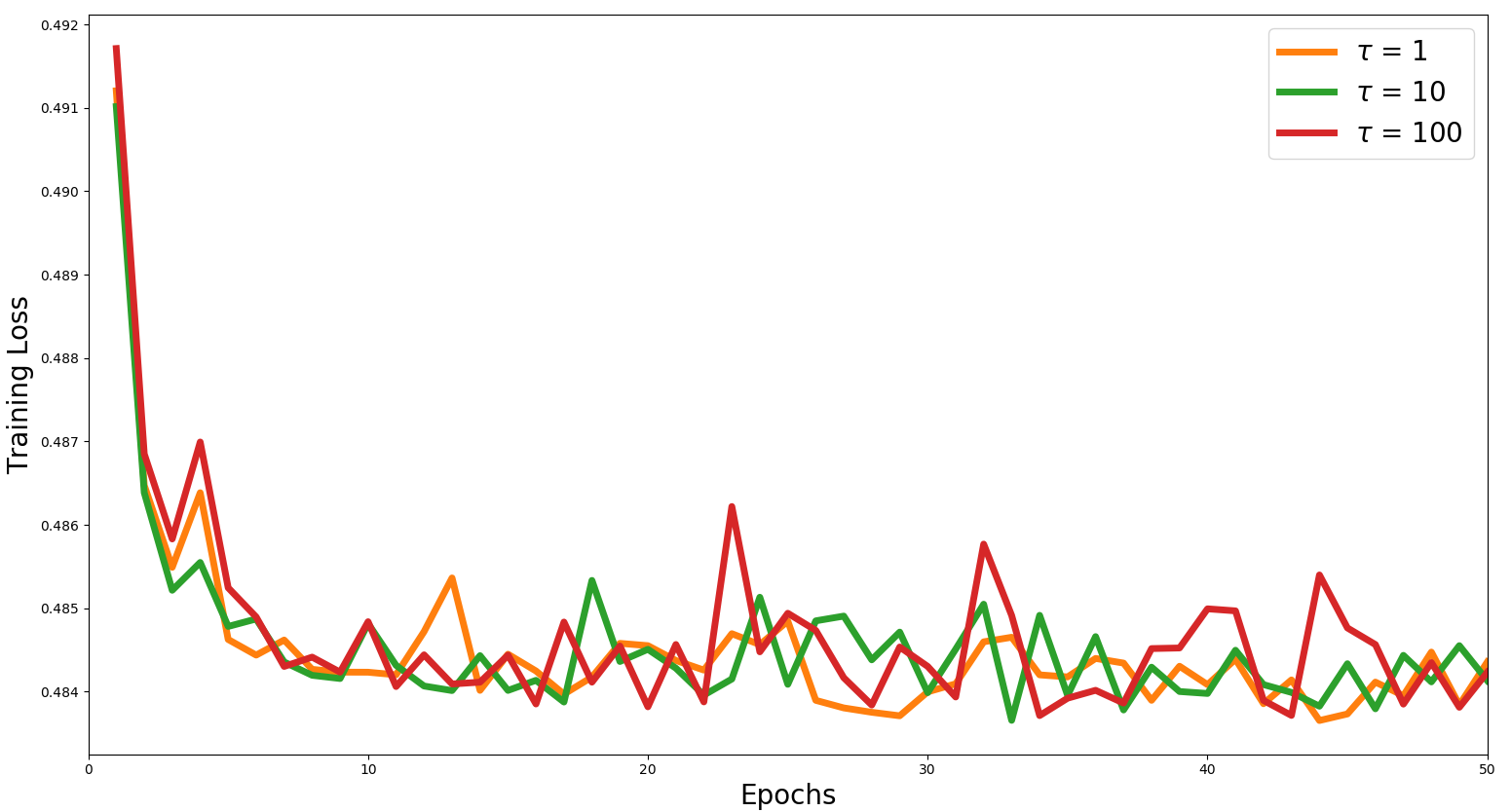} 
   \caption{\textit{covtype} for different $\tau$ with the whole non-zero set}
  \label{figure_02_b}
 \end{figure}

\section{Conclusion}
\label{sec:conclusion}

We have provided the analysis of stochastic gradient algorithms with diminishing step size in the strongly convex case under
the condition of Lipschitz continuity of the individual function realizations, but without requiring any bounds on
the stochastic gradients. We showed almost sure convergence of SGD and provided sublinear upper bounds for the expected convergence rate of a general recursion which includes Hogwild! for inconsistent reads and writes as a special case. We also provided new intuition which will help understanding convergence as observed in practice.

\section*{Acknowledgement}
Phuong Ha Nguyen and Marten van Dijk were supported in part by AFOSR MURI under award number FA9550-14-1-0351. Katya Scheinberg was partially supported by NSF Grants CCF 16-18717 and CCF 17-40796. Martin Tak\'{a}\v{c} was partially supported by the NSF Grant CCF-1618717, CMMI-1663256 and CCF-1740796.

\vskip 0.2in
\bibliography{references}

\clearpage
\appendix

\section{Review of Useful Theorems}\label{useful}

\begin{lem}[Generalization of the result in \cite{SVRG}]\label{lem_bound_diff_grad}
Let Assumptions \ref{ass_smooth} and \ref{ass_convex} hold. Then, $\forall w \in \mathbb{R}^d$, 
\begin{align}\label{eq:001}
\mathbb{E}[ \| \nabla f(w; \xi) - \nabla f(w_{*};\xi) \|^2 ] \leq 2L [ F(w) - F(w_{*})],   
\end{align}
where $\xi$ is a random variable, and $w_{*} = \arg \min_w F(w)$.  
\end{lem}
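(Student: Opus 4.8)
The plan is to first prove the inequality pointwise in $\xi$ and then average. Fix a realization of $\xi$ and set $g(\cdot) := f(\cdot;\xi)$, which by Assumptions \ref{ass_smooth} and \ref{ass_convex} is convex and $L$-smooth. The key ingredient is the classical co-coercivity bound for convex $L$-smooth functions: for all $w,w'\in\mathbb{R}^d$,
\[
\frac{1}{2L}\|\nabla g(w) - \nabla g(w')\|^2 \;\leq\; g(w) - g(w') - \langle \nabla g(w'), w - w' \rangle .
\]

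To establish this (following \cite{nesterov2004}), I would introduce the auxiliary function $\phi(u) := g(u) - \langle \nabla g(w'), u\rangle$. It is convex and $L$-smooth, and $\nabla\phi(w') = \nabla g(w') - \nabla g(w') = 0$, so $w'$ is a global minimizer of $\phi$. Applying the descent lemma \eqref{eq:Lsmooth} (which holds for $\phi$ since $\phi$ is $L$-smooth) at the point $w$ with displacement $-\tfrac{1}{L}\nabla\phi(w)$ gives $\phi\big(w - \tfrac1L\nabla\phi(w)\big) \leq \phi(w) - \tfrac{1}{2L}\|\nabla\phi(w)\|^2$; combining with $\phi(w') \leq \phi\big(w - \tfrac1L\nabla\phi(w)\big)$ and substituting $\nabla\phi(w) = \nabla g(w) - \nabla g(w')$ and $\phi(w) - \phi(w') = g(w) - g(w') - \langle \nabla g(w'), w - w'\rangle$ yields the co-coercivity bound.

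Next I would specialize to $w' = w_*$ and take the expectation over $\xi$. The left-hand side becomes $\tfrac{1}{2L}\mathbb{E}[\|\nabla f(w;\xi) - \nabla f(w_*;\xi)\|^2]$. On the right-hand side, linearity of expectation gives $\mathbb{E}[f(w;\xi)] - \mathbb{E}[f(w_*;\xi)] = F(w) - F(w_*)$, while the cross term satisfies $\mathbb{E}[\langle \nabla f(w_*;\xi), w - w_*\rangle] = \langle \nabla F(w_*), w - w_*\rangle = 0$ because $w_*$ minimizes $F$ and hence $\nabla F(w_*)=0$. Multiplying through by $2L$ gives exactly \eqref{eq:001}.

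There is no genuine obstacle here: the only nontrivial step is the co-coercivity inequality, which is standard, and everything else is linearity of expectation together with first-order optimality of $w_*$. The only point requiring care is applying the descent lemma to the shifted function $\phi$ rather than to $g$ directly, and keeping track of the fact that $\nabla F(w_*) = 0$ eliminates the inner-product term after averaging.
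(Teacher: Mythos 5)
Your proof is correct and is essentially the argument the paper relies on: the paper gives no proof of Lemma \ref{lem_bound_diff_grad}, delegating it to the cited reference \cite{SVRG}, whose proof is exactly this co-coercivity bound for each convex $L$-smooth realization $f(\cdot;\xi)$ specialized at $w'=w_*$, followed by taking expectations and using $\mathbb{E}[\nabla f(w_*;\xi)]=\nabla F(w_*)=0$ to kill the cross term. Your derivation of co-coercivity via the shifted function $\phi$ is the standard one from \cite{nesterov2004} and is sound.
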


\begin{lem}[\cite{BertsekasSurvey}]\label{prop_supermartingale}
Let $Y_k$, $Z_k$, and $W_k$, $k = 0,1,\dots$, be three sequences of random variables and let $\{\mathcal{F}_k\}_{k\geq 0}$ be a filtration, that is, $\sigma$-algebras such that $\mathcal{F}_k \subset \mathcal{F}_{k+1}$ for all $k$. Suppose that: 
\begin{itemize}
\item The random variables $Y_k$, $Z_k$, and $W_k$ are nonnegative, and $\mathcal{F}_k$-measurable. 
\item For each $k$, we have $
\mathbb{E}[Y_{k+1} | \mathcal{F}_k] \leq Y_k - Z_k + W_k$. 
\item There holds, w.p.1, 
\begin{gather*}
\sum_{k=0}^{\infty} W_k < \infty. 
\end{gather*}
\end{itemize}
Then, we have, w.p.1,
\begin{gather*}
\sum_{k=0}^{\infty} Z_k < \infty \ \text{and} \ Y_k \to Y \geq 0. 
\end{gather*}
\end{lem}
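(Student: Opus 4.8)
The plan is to reduce the statement to the classical almost-sure convergence theorem for nonnegative supermartingales by assembling an auxiliary process out of $Y_k$, $Z_k$, and $W_k$. The hypothesis $\mathbb{E}[Y_{k+1}\mid\mathcal{F}_k]\le Y_k - Z_k + W_k$ is almost a supermartingale relation for $Y_k$ itself, spoiled by the favorable term $-Z_k$ and the unfavorable term $+W_k$. The idea is to absorb both into running partial sums so that the accumulated drift cancels exactly, producing a true supermartingale, and then to deal separately with the fact that the resulting process is not automatically nonnegative.

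Concretely, I would set $V_k = Y_k + \sum_{j=0}^{k-1} Z_j - \sum_{j=0}^{k-1} W_j$, which is $\mathcal{F}_k$-measurable since $Z_j$ and $W_j$ are $\mathcal{F}_j$-measurable and hence $\mathcal{F}_{k-1}$-measurable for $j\le k-1$. A one-line computation using the hypothesis gives $\mathbb{E}[V_{k+1}\mid\mathcal{F}_k] = \mathbb{E}[Y_{k+1}\mid\mathcal{F}_k] + Z_k - W_k + \sum_{j=0}^{k-1}(Z_j - W_j) \le V_k$, so $V_k$ is a genuine supermartingale. The difficulty, which I expect to be the main obstacle, is that $V_k$ is not nonnegative: the only lower bound available is $V_k \ge -\sum_{j=0}^{k-1} W_j \ge -W$, where $W := \sum_{j=0}^\infty W_j$ is finite almost surely but is $\mathcal{F}_\infty$-measurable rather than adapted. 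Shifting by this random $W$ destroys the supermartingale property, so the nonnegative convergence theorem cannot be applied to $V_k$ directly.

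To overcome this I would localize. For each integer $m$, define $\tau_m = \inf\{k \ge 0 : \sum_{j=0}^{k} W_j > m\}$; since $\sum_{j=0}^{k} W_j$ is $\mathcal{F}_k$-measurable, $\tau_m$ is a stopping time. On the stopped process the subtracted cumulative sum never exceeds $m$, whence $V_{k\wedge\tau_m} \ge -m$, so $V_{k\wedge\tau_m} + m$ is a nonnegative supermartingale and therefore converges almost surely to a finite limit as $k\to\infty$. On the event $\{\tau_m = \infty\} = \{\sum_{j=0}^\infty W_j \le m\}$ one has $k\wedge\tau_m = k$, so $V_k$ itself converges there. Because $\bigcup_m\{\tau_m = \infty\} = \{\sum_j W_j < \infty\}$ has probability one by the third hypothesis, I conclude that $V_k$ converges almost surely to a finite limit.

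Finally I would unwind the definition. Since $\sum_{j=0}^{k-1} W_j \to W < \infty$ almost surely, the partial sums $S_k := Y_k + \sum_{j=0}^{k-1} Z_j = V_k + \sum_{j=0}^{k-1} W_j$ converge almost surely to a finite limit. As $Z_j \ge 0$, the sequence $\sum_{j=0}^{k-1} Z_j$ is nondecreasing and bounded above by $S_k$ (using $Y_k \ge 0$), hence converges, which yields $\sum_{j=0}^\infty Z_j < \infty$ almost surely. Then $Y_k = S_k - \sum_{j=0}^{k-1} Z_j$ is the difference of two almost surely convergent sequences and so converges almost surely to a limit $Y$, which is nonnegative because every $Y_k \ge 0$. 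This establishes both assertions of the lemma.
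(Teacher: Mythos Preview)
Your argument is the standard Robbins--Siegmund supermartingale proof and is correct. The paper itself does not prove this lemma: it is stated in the appendix as a known result cited from \cite{BertsekasSurvey} and used as a black box in the proof of Theorem~\ref{thm_general_02_new_02}. So there is no ``paper's own proof'' to compare against; you have simply supplied what the authors omitted.

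One small technical point worth making explicit in your write-up: the process $V_k$ may fail to be integrable (the lemma does not assume $\mathbb{E}[Y_0]<\infty$), so the supermartingale property for $V_k$ is only formal. Your localization fixes this, since $V_{k\wedge\tau_m}+m$ is nonnegative and the conditional expectation of a nonnegative random variable is always well defined; the supermartingale inequality for the stopped, shifted process can then be verified directly on the events $\{k<\tau_m\}$ and $\{k\ge\tau_m\}$ rather than by appealing to optional stopping for $V_k$. You are implicitly doing this, but spelling it out would make the argument airtight.
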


\section{Proofs of Lemmas \ref{lem_bounded_secondmoment_04} and \ref{lem_bounded_secondmoment_04_new}}

\subsection{Proof of Lemma \ref{lem_bounded_secondmoment_04}}
\textbf{Lemma \ref{lem_bounded_secondmoment_04}}. \textit{Let Assumptions \ref{ass_smooth} and \ref{ass_convex} hold. Then, for $\forall w \in \mathbb{R}^d$,} 
\begin{gather*}
\mathbb{E}[\|\nabla f(w; \xi)\|^2] \leq  4 L [ F(w) - F(w_{*}) ] + N,
\end{gather*}
\textit{where $N = 2 \mathbb{E}[ \|\nabla f(w_{*}; \xi)\|^2 ]$; $\xi$ is a random variable, and $w_{*} = \arg \min_w F(w)$.}

\vspace{.3cm}
\begin{proof}
Note that 
\begin{gather*}
\|a\|^2 = \|a - b + b\|^2 \leq 2\|a - b\|^2 + 2\|b\|^2, \tagthis{\label{eq_aaa001}} \\
\Rightarrow \frac{1}{2}\|a\|^2 - \|b\|^2 \leq \|a - b\|^2. \tagthis{\label{eq_aaa002}}
\end{gather*}


Hence, 
\begin{align*}
\frac{1}{2} \mathbb{E}[ \| \nabla f(w; \xi) \|^2 ] - \mathbb{E}[ \| \nabla f(w_{*}; \xi) \|^2 ] &= \mathbb{E} \left[ \frac{1}{2} \| \nabla f(w; \xi) \|^2 - \| \nabla f(w_{*}; \xi) \|^2 \right] \\ 
& \overset{\eqref{eq_aaa002}}{\leq} \mathbb{E}[\| \nabla f(w; \xi) - \nabla f(w_{*}; \xi) \|^2 ] \\
& \overset{\eqref{eq:001}}{\leq} 2 L [ F(w) - F(w_{*})] \tagthis{\label{eq_aaa005}}
\end{align*}

Therefore,
\begin{align*}
\mathbb{E}[\| \nabla f(w; \xi) \|^2] &\overset{\eqref{eq_aaa001}\eqref{eq_aaa005}}{\leq} 4 L [ F(w) - F(w_{*})] + 2 \mathbb{E}[ \| \nabla f(w_{*}; \xi) \|^2 ]. 
\end{align*}
\end{proof}

\subsection{Proof of Lemma \ref{lem_bounded_secondmoment_04_new}}
\textbf{Lemma \ref{lem_bounded_secondmoment_04_new}}. \textit{Let Assumptions \ref{ass_stronglyconvex} and \ref{ass_smooth} hold. Then, for $\forall w \in \mathbb{R}^d$, 
\begin{gather*}
\mathbb{E}\|\nabla f(w; \xi)\|^2 \leq  4L \kappa [ F(w) - F(w_{*}) ] + N,
\end{gather*}
where $\kappa = \frac{L}{\mu}$ and $N = 2 \mathbb{E}[ \|\nabla f(w_{*}; \xi)\|^2 ]$; $\xi$ is a random variable, and $w_{*} = \arg \min_w F(w)$.}

\vspace{.3cm}
\begin{proof}
Analogous to the proof of Lemma \ref{lem_bounded_secondmoment_04}, we have

Hence, 
\begin{align*}
\frac{1}{2} \mathbb{E}[ \| \nabla f(w; \xi) \|^2 ] - \mathbb{E}[ \| \nabla f(w_{*}; \xi) \|^2 ] &= \mathbb{E} \left[ \frac{1}{2} \| \nabla f(w; \xi) \|^2 - \| \nabla f(w_{*}; \xi) \|^2 \right] \\ 
& \overset{\eqref{eq_aaa002}}{\leq} \mathbb{E}[\| \nabla f(w; \xi) - \nabla f(w_{*}; \xi) \|^2 ] \\
& \overset{\eqref{eq:Lsmooth_basic}}{\leq} L^2 \| w - w_{*} \|^2 \\ 
& \overset{\eqref{eq:stronglyconvex_00}}{\leq} \frac{2L^2}{\mu}[F(w) - F(w_{*})] = 2 L \kappa [F(w) - F(w_{*})]. 
 \tagthis{\label{eq_aaa005_new}}
\end{align*}

Therefore,
\begin{align*}
\mathbb{E}[\| \nabla f(w; \xi) \|^2] \overset{\eqref{eq_aaa001}\eqref{eq_aaa005_new}}{\leq} 4 L \kappa [ F(w) - F(w_{*})] + 2 \mathbb{E}[ \| \nabla f(w_{*}; \xi) \|^2 ]. 
\end{align*}
\end{proof}

\section{Analysis for Algorithm \ref{sgd_algorithm}}\label{sec_analysis_sgd}


In this Section, we provide the analysis of Algorithm \ref{sgd_algorithm} under Assumptions \ref{ass_stronglyconvex}, \ref{ass_smooth}, and \ref{ass_convex}.

We note that if $\{\xi_i\}_{i \geq 0}$ are i.i.d. random variables, then $\mathbb{E}[ \| \nabla f(w_{*}; \xi_0) \|^2 ] = \dots = \mathbb{E}[ \| \nabla f(w_{*}; \xi_t) \|^2 ]$. We have the following results for Algorithm \ref{sgd_algorithm}. 


\vspace{.3cm}

\noindent
\textbf{Theorem \ref{thm_general_02_new_02}} \textbf{(Sufficient condition for almost sure convergence)}. \textit{Let Assumptions \ref{ass_stronglyconvex},   \ref{ass_smooth} and \ref{ass_convex} hold. Consider Algorithm \ref{sgd_algorithm} with a stepsize sequence such that
\begin{align*}
0 < \eta_t \leq \frac{1}{2 L} \ , \ \sum_{t=0}^{\infty} \eta_t = \infty \ \text{and} \ \sum_{t=0}^{\infty} \eta_t^2 < \infty. 
\end{align*}
Then, the following holds w.p.1 (almost surely)
\begin{align*}
\| w_{t} - w_{*} \|^2 \to 0. 
\end{align*}}

\vspace{.3cm}

\begin{proof}
Let $\mathcal{F}_{t} = \sigma(w_{0},\xi_{0},\dots,\xi_{t-1})$ be the $\sigma$-algebra generated by $w_{0},\xi_{0},\dots,\xi_{t-1}$, i.e., $\mathcal{F}_{t}$ contains all the information of $w_{0},\dots,w_{t}$. Note that $\mathbb{E}[\nabla f(w_{t}; \xi_t) | \mathcal{F}_{t}] = \nabla F(w_{t})$. By Lemma \ref{lem_bounded_secondmoment_04}, we have
\begin{gather*}
\mathbb{E}[\|\nabla f(w_{t}; \xi_t)\|^2 | \mathcal{F}_{t} ] \leq  4 L [ F(w_{t}) - F(w_{*}) ] + N, \tagthis \label{ineq:bounded_lemma3_new02}
\end{gather*}
where $N = 2 \mathbb{E}[ \| \nabla f(w_{*}; \xi_0) \|^2 ] = \dots = 2 \mathbb{E}[ \| \nabla f(w_{*}; \xi_t) \|^2 ]$ since $\{\xi_i\}_{i \geq 0}$ are i.i.d. random variables. Note that $w_{t+1} = w_{t} - \eta_t \nabla f(w_{t};\xi_t)$. Hence,
\begin{align*}
\mathbb{E}[\| w_{t+1} - w_{*} \|^2 | \mathcal{F}_{t}] & = \mathbb{E}[ \| w_{t} - \eta_t \nabla f(w_{t};\xi_t) - w_{*} \|^2 | \mathcal{F}_{t}] \\
& = \| w_{t} - w_{*} \|^2 - 2 \eta_t \langle \nabla F(w_{t})  , (w_{t} - w_{*}) \rangle + \eta_t^2 \mathbb{E}[\|\nabla f(w_{t}; \xi_t)\|^2 | \mathcal{F}_{t} ] \\
& \overset{\eqref{eq:stronglyconvex_00}\eqref{ineq:bounded_lemma3_new02}}{\leq} \| w_{t} - w_{*} \|^2 - \mu \eta_t \| w_{t} - w_{*} \|^2 - 2 \eta_t [ F(w_{t}) - F(w_{*}) ] \\&+ 4 L \eta_t^2 [ F(w_{t}) - F(w_{*}) ]  + \eta_t^2 N \\
& = \| w_{t} - w_{*} \|^2 - \mu \eta_t \| w_{t} - w_{*} \|^2 - 2 \eta_t ( 1 - 2 L \eta_t) [ F(w_{t}) - F(w_{*}) ]  + \eta_t^2 N \\
& \leq \| w_{t} - w_{*} \|^2 - \mu \eta_t \| w_{t} - w_{*} \|^2  + \eta_t^2 N.
\end{align*}
The last inequality follows since $0 < \eta_t \leq \frac{1}{2L}$. Therefore,
\begin{align*}
\mathbb{E}[\| w_{t+1} - w_{*} \|^2 | \mathcal{F}_{t}]  \leq \| w_{t} - w_{*} \|^2 - \mu \eta_t \| w_{t} - w_{*} \|^2  + \eta_t^2 N. \tagthis \label{main_ineq_sgd_new02}
\end{align*}
Since $\sum_{t=0}^{\infty} \eta_t^2 N < \infty$, 
we could apply Lemma \ref{prop_supermartingale}. Then, we have w.p.1,
\begin{gather*}
\| w_{t} - w_{*} \|^2 \to W \geq 0, \\ \text{and} \
\sum_{t=0}^{\infty} \mu \eta_t \| w_{t} - w_{*} \|^2 < \infty. 
\end{gather*}

We want to show that $\| w_{t} - w_{*} \|^2 \to 0$, w.p.1. Proving by contradiction, we assume that there exist $\epsilon > 0$ and $t_0$, s.t. $\| w_{t} - w_{*} \|^2 \geq \epsilon$ for $\forall t \geq t_0$. Hence, 
\begin{align*}
\sum_{t=t_0}^{\infty} \mu \eta_t \| w_{t} - w_{*} \|^2 \geq \mu \epsilon \sum_{t=t_0}^{\infty} \eta_t = \infty. 
\end{align*}
This is a contradiction. Therefore, $\|w_{t} - w_{*}\|^2 \to 0$ w.p.1. 
\end{proof}

 
 \noindent
\textbf{Theorem \ref{thm_res_sublinear_new_02}}. \textit{Let Assumptions \ref{ass_stronglyconvex}, \ref{ass_smooth} and \ref{ass_convex} hold. Let $E = \frac{2\alpha L}{\mu}$ with $\alpha=2$. Consider Algorithm \ref{sgd_algorithm} with a stepsize sequence such that $\eta_t = \frac{\alpha}{\mu(t+E)} \leq \eta_0=\frac{1}{2L}$. Then,}
$$ \mathbb{E}[\|w_{t} - w_{*}\|^2] \leq \frac{4\alpha^2 N}{\mu^2} 
\frac{1 }{(t-T+E)} $$
\textit{for} $t\geq T =\frac{4L}{\mu}\max \{ \frac{L\mu}{N} \|w_{0} - w_{*}\|^2, 1\} - \frac{4L}{\mu}$, \textit{where} $N = 2 \mathbb{E}[ \|\nabla f(w_{*}; \xi)\|^2 ]$ \textit{and} $w_{*} = \arg \min_w F(w)$.  

\vspace{.3cm}

\begin{proof}
Using the beginning of the proof of Theorem \ref{thm_general_02_new_02}, taking the expectation to \eqref{main_ineq_sgd_new02}, with $0 < \eta_t \leq \frac{1}{2L}$, we have
\begin{align*}
\mathbb{E}[\| w_{t+1} - w_{*} \|^2 ]  \leq (1 - \mu \eta_t) \mathbb{E}[ \| w_{t} - w_{*} \|^2]  + \eta_t^2 N.   
\end{align*} 

We first show that
\begin{equation} \mathbb{E}[\|w_{t} - w_{*}\|^2]\leq \frac{N}{\mu^2} G
\frac{1 }{(t+E)}, \label{eqGGG} \end{equation}
where 
$G=\max\{I,J\}$, and 
\begin{align*}
I &= \frac{E \mu^2}{N} \mathbb{E}[\|w_{0} - w_{*}\|^2]>0, \\
J &= \frac{\alpha^2}{\alpha - 1}>0.
\end{align*}

We use mathematical induction to prove (\ref{eqGGG}) (this trick is based on the idea from \cite{bottou2016optimization}). Let $t = 0$, we have 
\begin{align*}
\mathbb{E}[\|w_{0} - w_{*}\|^2] 
\leq \frac{NG}{\mu^2 E},  
\end{align*}
which is obviously true since 
$
G \geq \frac{E\mu^2}{N} \|w_{0} - w_{*}\|^2.
$

Suppose it is true for $t$, we need to show that it is also true for $t+1$. We have
\begin{align*}
\mathbb{E}[\|w_{t+1} - w_{*}\|^2]  & \leq \left(1 - \frac{\alpha}{t+E} \right) \frac{NG}{\mu^2(t+E)} + \frac{\alpha^2 N}{\mu^2(t+E)^2} \\
& = \left(\frac{t +E - \alpha}{\mu^2(t+E)^2} \right) NG + \frac{\alpha^2N}{\mu^2(t+E)^2} \\
& = \left(\frac{t +E - 1}{\mu^2(t+E)^2} \right) NG  - \left(\frac{\alpha - 1}{\mu^2(t+E)^2} \right) NG + \frac{\alpha^2 N}{\mu^2(t+E)^2}.
\end{align*}
Since
$
G \geq \frac{\alpha^2}{\alpha - 1},
$
$$ - \left(\frac{\alpha - 1}{\mu^2(t+E)^2} \right) NG + \frac{\alpha^2 N}{\mu^2(t+E)^2}\leq 0.$$
This implies
\begin{align*}
\mathbb{E}[\|w_{t+1} - w_{*}\|^2]  & \leq \left(\frac{t+E - 1}{\mu^2(t+E)^2} \right) NG \\
& = \left(\frac{(t+E)^2 - 1}{(t+E)^2} \right) \frac{NG}{\mu^2(t +E+ 1)} \\
& \leq\frac{NG}{\mu^2(t +E+ 1)}. 
\end{align*}
This proves (\ref{eqGGG}) by induction in $t$.

Notice that the induction proof of (\ref{eqGGG}) holds more generally for $E\geq \frac{2\alpha L}{\mu}$ with $\alpha>1$ (this is sufficient for showing $\eta_t \leq \frac{1}{2L}$. In this more general interpretation we can see that the convergence rate is minimized for $I$ minimal, i.e., $E=\frac{2\alpha L}{\mu}$ and for this reason we have fixed $E$ as such in the theorem statement.

Notice that 
$$G=\max\{I,J\} = \max \{ \frac{2 \alpha L \mu}{N} \mathbb{E}[\|w_{0} - w_{*}\|^2], \frac{\alpha^2}{\alpha - 1} \}.$$
We choose $\alpha=2$ such that $\eta_t$ only depends on known parameters $\mu$ and $L$. For this $\alpha$ we obtain
$$G = 4 \max \{\frac{L \mu}{N} \mathbb{E}[\|w_{0} - w_{*}\|^2], 1\}.$$

For
$T =\frac{4L}{\mu}\max \{ \frac{L\mu}{N} \mathbb{E}[\|w_{0} - w_{*}\|^2], 1\} - \frac{4L}{\mu}$, 
 we have that
according to (\ref{eqGGG})
\begin{eqnarray}
 \frac{L \mu}{N} \mathbb{E}[\|w_{T} - w_{*}\|^2]&\leq& \frac{L \mu}{N}\frac{N}{\mu^2} 
\frac{G }{(T+E)} \nonumber \\
&=& \frac{L}{\mu} \frac{ 4 \max \{\frac{L \mu}{N} \mathbb{E}[\|w_{0} - w_{*}\|^2], 1\}}
{\frac{4L}{\mu}\max \{\frac{L\mu}{N} \mathbb{E}[\|w_{0} - w_{*}\|^2], 1\} } =
1. \label{eqTT} \end{eqnarray}
Applying (\ref{eqGGG})with $w_T$ as starting point rather than $w_0$ gives, for $t\geq \max\{T,0\}$,
$$\mathbb{E}[\|w_{t} - w_{*}\|^2]\leq \frac{N}{\mu^2} G
\frac{1 }{(t-T+E)},$$
where $G$ is now equal to
$$ 4 \max \{ \frac{L \mu}{N} \mathbb{E}[\|w_{T} - w_{*}\|^2], 1\},$$
which equals $4$, see (\ref{eqTT}). For any given $w_0$, we prove the theorem. 
\end{proof}


\section{Analysis for Algorithm~\ref{HogWildAlgorithm}}
\label{sec:Hogwild_insconsistent_read_write}

\subsection{Recurrence and Notation}

We introduce the following notation: For each $\xi$, we define $D_\xi \subseteq \{1,\ldots, d\}$ as the set of possible non-zero positions in a vector of the form $\nabla f(w;\xi)$ for some $w$. We consider a fixed mapping from $u\in U$ to subsets $S^{\xi}_u\subseteq D_\xi$ for each possible $\xi$. In our notation we also let $D_\xi$ represent the diagonal $d\times d$ matrix with ones exactly at the positions corresponding to $D_\xi$ and with zeroes elsewhere. Similarly, $S^{\xi}_u$ also denotes a diagonal matrix with ones at the positions corresponding to $D_\xi$.

We will use a probability distribution $p_\xi(u)$ to indicate how to randomly select a matrix $S^\xi_u$. We choose the matrices $S^\xi_u$ and distribution $p_\xi(u)$ so that there exist $d_\xi$ such that
\begin{equation} d_\xi \mathbb{E}[S^\xi_u | \xi] = D_\xi, \label{eq:Sexp} \end{equation}
where the expectation is over $p_\xi(u)$.

We will restrict ourselves to choosing {\em non-empty} sets  $S^\xi_u$ that partition $D_\xi$ in $D$ approximately equally sized sets together with uniform distributions $p_\xi(u)$ for some fixed $D$. So, if $D\leq |D_\xi|$, then sets have sizes $\lfloor |D_\xi|/D \rfloor$ and $\lceil |D_\xi|/D \rceil$. For the special case $D>|D_\xi|$ we have exactly $|D_\xi|$ singleton sets of size $1$ (in our definition we only use non-empty sets).

For example, for $D=\bar{\Delta}$, where 
$$ \bar{\Delta} = \max_\xi \{ |D_\xi|\}$$
represents the maximum number of non-zero positions in any gradient computation $f(w;\xi)$, we have that for all $\xi$, there are exactly $|D_\xi|$ singleton sets $S^\xi_u$ representing each of the elements in $D_\xi$. Since  $p_\xi(u)= 1/|D_\xi|$ is the uniform distribution, we have $\mathbb{E}[S^\xi_u | \xi] = D_\xi / |D_\xi|$, hence, $d_\xi = |D_\xi|$.
As another example at the other extreme, for $D=1$, we have exactly one set $S^\xi_1=D_\xi$ for each $\xi$. Now $p_\xi(1)=1$ and we have $d_\xi=1$.

We define the parameter
$$ \bar{\Delta}_D \eqdef D \cdot \mathbb{E}[\lceil |D_\xi|/D \rceil],$$
where the expectation is over $\xi$.
We use $\bar{\Delta}_D$ in the leading asymptotic term for the convergence rate in our main theorem. We observe that
$$ \bar{\Delta}_D \leq \mathbb{E}[|D_\xi|] + D-1$$
and
$\bar{\Delta}_D \leq \bar{\Delta}$ with equality for $D=\bar{\Delta}$.

For completeness we define
\begin{equation}
\Delta \eqdef 
\max_{i}
\Prob \left(   i \in  D_\xi  \right). \nonumber
\end{equation}
Let us remark, that $\Delta \in (0,1]$ measures the probability of collision.
Small $\Delta$ means that 
there is a small chance that the support of two random realizations of $\nabla f(w;\xi)$
will have an intersection.
On the other hand, $\Delta = 1$ means that almost surely, the support of two stochastic gradients will have non-empty intersection.

With this definition of $\Delta$ it is an easy exercise to show that 
for iid $\xi_1$ and $\xi_2$ in a finite-sum setting (i.e., $\xi_i$ and $\xi_2$ can only take on a finite set of possible values) we have
\begin{align*}
&\Exp[|\ve{ \nabla f(w_1; \xi_1) }{\nabla f(w_2; \xi_2)}|] 
\\& \leq 
\frac{\sqrt \Delta}2
\left(
\Exp[\|  \nabla f(w_1; \xi_1) \|^2] + \Exp[\|\nabla f(w_2; \xi_2)\|^2]
\right)
\tagthis \label{Masdfasdfasfa}
\end{align*}
(see Proposition 10 in \cite{Leblond2018}).
We notice that in the non-finite sum setting we can use the property that for any two vectors $a$ and $b$, $\langle a, b\rangle \leq (\|a\|^2 + \|b\|^2)/2$ and this proves (\ref{Masdfasdfasfa}) with $\Delta$ set to $\Delta=1$. In our asymptotic analysis of the convergence rate, we will show how $\Delta$ plays a role in non-leading terms -- this, with respect to the leading term, it will not matter whether we use $\Delta=1$ or $\Delta$ equal the probability of collision (in the finite sum case).






We have
\begin{equation}
 w_{t+1} = w_t - \eta_t d_{\xi_t}  S^{\xi_t}_{u_t} \nabla f(\hat{w}_t;\xi_t),\label{eqw}
 \end{equation}
where  $\hat{w}_t$ represents the vector used in computing the gradient $\nabla f(\hat{w}_t;\xi_t)$ and whose entries have been read (one by one)  from  an aggregate of a mix of  previous updates that led to $w_{j}$, $j\leq t$.
Here, we assume that
\begin{itemize}
\item updating/writing to vector positions is atomic, reading vector positions is atomic, and
\item there exists a ``delay'' $\tau$ such that, for all $t$, vector $\hat{w}_t$ includes all the updates up to and including those made during the $(t-\tau)$-th iteration (where (\ref{eqw}) defines the $(t+1)$-st iteration).
\end{itemize}
Notice that we do {\bf not assume consistent reads and writes of vector positions}. We only assume that up to a ``delay'' $\tau$ all writes/updates are included in the values of positions that are being read.

According to our definition of $\tau$, in (\ref{eqw}) vector $\hat{w}_t$ represents an inconsistent read with entries that contain all of the updates made during the $1$st to $(t-\tau)$-th iteration. Furthermore each entry in $\hat{w}_t$ includes some of the updates made during the $(t-\tau+1)$-th iteration up to $t$-th iteration. Each entry includes its own subset of updates because writes are inconsistent.  We model this by ``masks'' $\Sigma_{t,j}$ for $t-\tau\leq j\leq t-1$. A mask $\Sigma_{t,j}$ is a diagonal 0/1-matrix with the 1s expressing which of the entry updates made in the $(j+1)$-th iteration are included in $\hat{w}_t$.
That is,
 \begin{equation}
\label{eq:what_wrhot_1}
\hat{w}_t = w_{t-\tau} - \sum_{j=t-\tau}^{t-1} \eta_j d_{\xi_j} \Sigma_{t,j}  S^{\xi_j}_{u_j} \nabla  f(\hat{w}_j;\xi_j)  .
\end{equation}


Notice that the recursion (\ref{eqw}) implies
\begin{equation}
\label{eq:wt_wrhot_1}
w_t = w_{t-\tau} - \sum_{j=t-\tau}^{t-1} \eta_j d_{\xi_j}  S^{\xi_j}_{u_j} \nabla  f(\hat{w}_j;\xi_j) .
\end{equation}
By combining~\eqref{eq:wt_wrhot_1} and~\eqref{eq:what_wrhot_1} we obtain
\begin{equation}
\label{eq:wt_what_2}
w_t-\hat{w}_t = - \sum_{j=t-\tau}^{t-1}  \eta_j d_{\xi_j} (I-\Sigma_{t,j})  S^{\xi_j}_{u_j}  \nabla f(\hat{w}_j;\xi_j) ,
\end{equation}
where $I$ represents the identity matrix.


\subsection{Main Analysis}\label{subsec_analysis}

We first derive a couple lemmas which will help us deriving our main bounds. In what follows let Assumptions \ref{ass_stronglyconvex}, \ref{ass_smooth}, \ref{ass_convex} and  \ref{ass_tau} hold for all lemmas. We define 
$$\mathcal{F}_t=\sigma(w_0,\xi_0,u_0,\sigma_0,\dotsc, \xi_{t-1},u_{t-1},\sigma_{t-1}), $$
where
$$\sigma_{t-1} = (\Sigma_{t,t-\tau}, \ldots, \Sigma_{t,t-1}).$$
When we subtract $\tau$ from, for example, $t$ and write $t-\tau$, we will actually mean $\max\{t-\tau, 0\}$.

 \begin{lem} \label{lem:expect}  
 We have
 $$\mathbb{E}[\| d_{\xi_t}S^{\xi_t}_{u_t} \nabla  f(\hat{w}_t;\xi_t)  \|^2| \mathcal{F}_t, \xi_t]\leq  D \|\nabla f(\hat{w}_t;\xi_t) \|^2$$
 and
 $$\mathbb{E}[d_{\xi_t}S^{\xi_t}_{u_t} \nabla  f(\hat{w}_t;\xi_t) | \mathcal{F}_t ] 
= \nabla F(\hat{w}_t).$$
 \end{lem}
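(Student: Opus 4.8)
The plan is to compute both expectations by conditioning appropriately and exploiting the partition structure of the matrices $S^{\xi}_u$ together with the defining relation \eqref{eq:Sexp}. Throughout, recall that each $S^{\xi_t}_{u_t}$ is a diagonal $0/1$-matrix selected uniformly from the $d_{\xi_t}$ matrices that partition $D_{\xi_t}$, and that each such matrix has at most $\lceil |D_{\xi_t}|/D\rceil \le D$ ones on its diagonal when $D \le |D_{\xi_t}|$, and exactly one when $D > |D_{\xi_t}|$; in either case the number of ones is at most $D$. The key observation is that $\hat{w}_t$ is $\mathcal{F}_t$-measurable (it is assembled from $w_{t-\tau}$ and masked updates indexed by iterations $\le t-1$), so conditioning on $\mathcal{F}_t$ (and additionally on $\xi_t$ for the first claim) fixes $\hat{w}_t$ and $\nabla f(\hat{w}_t;\xi_t)$, leaving only the randomness in $u_t$.

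For the first inequality, fix $\mathcal{F}_t$ and $\xi_t$. Since $S^{\xi_t}_{u_t}$ is diagonal $0/1$, we have $\|d_{\xi_t} S^{\xi_t}_{u_t} \nabla f(\hat{w}_t;\xi_t)\|^2 = d_{\xi_t}^2 \sum_{i} [S^{\xi_t}_{u_t}]_{ii} [\nabla f(\hat{w}_t;\xi_t)]_i^2$ (using that $[S^{\xi_t}_{u_t}]_{ii}^2 = [S^{\xi_t}_{u_t}]_{ii}$). Taking the expectation over the uniform choice of $u_t$ and using $d_{\xi_t}\mathbb{E}[S^{\xi_t}_{u_t}\mid \xi_t] = D_{\xi_t}$ from \eqref{eq:Sexp}, the sum becomes $d_{\xi_t}\sum_i [D_{\xi_t}]_{ii}[\nabla f(\hat{w}_t;\xi_t)]_i^2 = d_{\xi_t}\|\nabla f(\hat{w}_t;\xi_t)\|^2$, because $D_{\xi_t}$ covers the full support of the gradient. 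Hence the conditional second moment equals $d_{\xi_t}\|\nabla f(\hat{w}_t;\xi_t)\|^2$, and it remains to bound $d_{\xi_t}\le D$: since the $d_{\xi_t}$ sets $S^{\xi}_u$ partition $D_\xi$ into pieces of size $\lfloor |D_\xi|/D\rfloor$ or $\lceil |D_\xi|/D\rceil$ (or singletons when $D>|D_\xi|$), we get $d_{\xi_t} = \lceil |D_{\xi_t}|/D\rceil$ if $D\mid |D_{\xi_t}|$ would not hold cleanly, and more simply $d_{\xi_t}\le D$ always (when $D>|D_\xi|$, $d_\xi=|D_\xi|<D$; when $D\le|D_\xi|$, $d_\xi=\lceil|D_\xi|/D\rceil\le D$ provided $|D_\xi|\le D^2$ — actually the clean bound is $d_\xi\le D$ since each of the $d_\xi$ parts has at least one element and there are $D$ of them when $D\le|D_\xi|$). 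This yields the stated bound $\le D\|\nabla f(\hat{w}_t;\xi_t)\|^2$.

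For the second identity, condition only on $\mathcal{F}_t$ and take expectation over $(\xi_t, u_t)$. By the tower property write $\mathbb{E}[d_{\xi_t}S^{\xi_t}_{u_t}\nabla f(\hat{w}_t;\xi_t)\mid\mathcal{F}_t] = \mathbb{E}_{\xi_t}\big[\mathbb{E}[d_{\xi_t}S^{\xi_t}_{u_t}\mid\mathcal{F}_t,\xi_t]\,\nabla f(\hat{w}_t;\xi_t)\big]$; the inner expectation is $D_{\xi_t}$ by \eqref{eq:Sexp}, and $D_{\xi_t}\nabla f(\hat{w}_t;\xi_t) = \nabla f(\hat{w}_t;\xi_t)$ since $D_{\xi_t}$ is the identity on the support of $\nabla f(\cdot;\xi_t)$. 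Therefore the expression reduces to $\mathbb{E}_{\xi_t}[\nabla f(\hat{w}_t;\xi_t)\mid\mathcal{F}_t] = \nabla F(\hat{w}_t)$, using that $\xi_t$ is independent of $\mathcal{F}_t$ (the $\xi$'s are i.i.d.) and $\hat{w}_t$ is $\mathcal{F}_t$-measurable, so that the conditional expectation of the stochastic gradient at the fixed point $\hat{w}_t$ is the true gradient $\nabla F(\hat{w}_t)$. The main obstacle — really the only subtle point — is being careful about the bound $d_{\xi_t}\le D$ in the edge cases where $D$ exceeds the support size, and making explicit that $\hat{w}_t$ is $\mathcal{F}_t$-measurable so that the unbiasedness of $\nabla f$ and the partition identity \eqref{eq:Sexp} can both be invoked cleanly; everything else is a direct diagonal-matrix computation.
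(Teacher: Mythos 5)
Your proposal is correct and follows essentially the same route as the paper: expand the squared norm over the diagonal $0/1$ entries, average over the uniform choice of $u_t$ via \eqref{eq:Sexp} to get $d_{\xi_t}\|\nabla f(\hat{w}_t;\xi_t)\|^2$, bound $d_{\xi_t}\le D$, and for the second claim use \eqref{eq:Sexp} plus the tower property and the $\mathcal{F}_t$-measurability of $\hat{w}_t$. The only blemish is the momentary misidentification $d_{\xi_t}=\lceil|D_{\xi_t}|/D\rceil$ (that is the size of each part, not the number of parts), which you correctly retract in the same sentence since $d_\xi$ equals the number of matrices $S^\xi_u$, namely $D$ when $D\le|D_\xi|$ and $|D_\xi|$ otherwise, so $d_\xi\le D$ in both cases.
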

 
 \vspace{.3cm}
 
 \begin{proof}
 For the first bound, if we take the expectation of $\| d_{\xi_t}S^{\xi_t}_{u_t} \nabla  f(\hat{w}_t;\xi_t)  \|^2$ with respect to $u_t$, then we have (for vectors $x$ we denote the value if its $i$-th position by $[x]_i$)
 \begin{align*}
& \mathbb{E}[\|d_{\xi_t}S^{\xi_t}_{u_t} \nabla  f(\hat{w}_t;\xi_t)  \|^2| \mathcal{F}_t, \xi_t] = 
 d_{\xi_t}^2 \sum_u p_{\xi_t}(u)  \|S^{\xi_t}_{u} \nabla  f(\hat{w}_t;\xi_t)  \|^2 = \\
&= 
 d_{\xi_t}^2 \sum_u p_{\xi_t}(u)  \sum_{i\in S^{\xi_t}_{u}} [\nabla f(\hat{w}_t;\xi_t)]_i^2 
 d_{\xi_t} \sum_{i\in D_{\xi_t}} [\nabla f(\hat{w}_t;\xi_t)]_i^2 =
 d_{\xi_t} \| f(\hat{w}_t;\xi_t) \|^2 \leq  D \|\nabla f(\hat{w}_t;\xi_t) \|^2,
\end{align*}
where the transition to the second line follows from (\ref{eq:Sexp}).

For the second bound, if we take the expectation of $ d_{\xi_t}S^{\xi_t}_{u_t} \nabla  f(\hat{w}_t;\xi_t)$ wrt $u_t$, then we have:
\begin{align*}
\mathbb{E}[ d_{\xi_t}S^{\xi_t}_{u_t} \nabla  f(\hat{w}_t;\xi_t) | \mathcal{F}_t, \xi_t] &=
 d_{\xi_t} \sum_u p_{\xi_t}(u) S^{\xi_t}_{u}  \nabla f(\hat{w}_t;\xi_t) = D_{\xi_t} \nabla  f(\hat{w}_t;\xi_t)= 
    \nabla f(\hat{w}_t;\xi_t),
\end{align*}
and this can be used to derive
\begin{align*}
\mathbb{E}[ d_{\xi_t}S^{\xi_t}_{u_t}  f(\hat{w}_t;\xi_t) | \mathcal{F}_t]
=
\mathbb{E}[\mathbb{E}[d_{\xi_t}S^{\xi_t}_{u_t}  f(\hat{w}_t;\xi_t) | \mathcal{F}_t, \xi_t] | \mathcal{F}_t] &=  \nabla F(\hat{w}_t).  
\end{align*}
\end{proof}

As a consequence of this lemma we derive a bound on the expectation of  $\|w_t- \hat{w}_t \|^2$.

\begin{lem}
\label{lemma:hogwild_21} The expectation of $\|w_t- \hat{w}_t \|^2$ is at most
$$\mathbb{E}[\|w_t- \hat{w}_t \|^2 ] \leq  (1+\sqrt{\Delta}\tau) D \sum_{j=t-\tau}^{t-1} \eta_j^2 (2L^2 \mathbb{E}[\| \hat{w}_j -w_{*} \|^2] +N). $$
\end{lem}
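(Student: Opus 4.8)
Starting from the identity~\eqref{eq:wt_what_2}, I would write
$$ w_t - \hat{w}_t = - \sum_{j=t-\tau}^{t-1} \eta_j\, d_{\xi_j}\, (I - \Sigma_{t,j})\, S^{\xi_j}_{u_j}\, \nabla f(\hat{w}_j;\xi_j), $$
so that $\|w_t - \hat{w}_t\|^2$ is the squared norm of a sum of $\tau$ terms. The first move is to expand the square into diagonal terms $\|\eta_j d_{\xi_j}(I-\Sigma_{t,j})S^{\xi_j}_{u_j}\nabla f(\hat{w}_j;\xi_j)\|^2$ and cross terms $\langle \cdot,\cdot\rangle$ for $j \neq j'$. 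For the diagonal terms I would use that $I - \Sigma_{t,j}$ is a $0/1$ diagonal matrix, hence a contraction, so $\|(I-\Sigma_{t,j})S^{\xi_j}_{u_j}\nabla f(\hat{w}_j;\xi_j)\|^2 \le \|S^{\xi_j}_{u_j}\nabla f(\hat{w}_j;\xi_j)\|^2$, and then Lemma~\ref{lem:expect} gives conditional expectation at most $D\,\|\nabla f(\hat{w}_j;\xi_j)\|^2$. For the cross terms I would invoke the collision bound~\eqref{Masdfasdfasfa} (with $\Delta$ the collision probability, or $\Delta=1$ in the non-finite-sum case): each inner product is bounded in expectation by $\tfrac{\sqrt{\Delta}}{2}(\eta_j^2 d_{\xi_j}^2\|S^{\xi_j}_{u_j}\nabla f(\hat{w}_j;\xi_j)\|^2 + \eta_{j'}^2 d_{\xi_{j'}}^2\|S^{\xi_{j'}}_{u_{j'}}\nabla f(\hat{w}_{j'};\xi_{j'})\|^2)$ — the point of $\Delta$ is that the supports of two independent stochastic gradients rarely overlap, so the cross terms are small; again taking conditional expectations and applying Lemma~\ref{lem:expect} converts these into $D\,\eta_j^2\|\nabla f(\hat{w}_j;\xi_j)\|^2$-type quantities. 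Counting, there are $\tau$ diagonal terms and at most $\tau(\tau-1)$ ordered cross terms, and after collecting, each index $j$ picks up a coefficient $(1 + \sqrt{\Delta}\tau)D\eta_j^2$ multiplying $\mathbb{E}[\|\nabla f(\hat{w}_j;\xi_j)\|^2]$.

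The second move is to bound $\mathbb{E}[\|\nabla f(\hat{w}_j;\xi_j)\|^2]$ using Lemma~\ref{lem_bounded_secondmoment_04} in the form $\mathbb{E}[\|\nabla f(w;\xi)\|^2] \le 4L[F(w)-F(w_*)] + N$, and then converting $F(\hat{w}_j)-F(w_*)$ into a squared-distance term: by $L$-smoothness of $F$ (inequality~\eqref{eq:Lsmooth}) applied at $w = \hat{w}_j$, $w' = w_*$, together with $\nabla F(w_*)=0$, we get $F(\hat{w}_j) - F(w_*) \le \tfrac{L}{2}\|\hat{w}_j - w_*\|^2$, hence $4L[F(\hat{w}_j)-F(w_*)] \le 2L^2\|\hat{w}_j - w_*\|^2$. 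Substituting this bound gives $\mathbb{E}[\|\nabla f(\hat{w}_j;\xi_j)\|^2] \le 2L^2\,\mathbb{E}[\|\hat{w}_j - w_*\|^2] + N$, which is exactly the shape appearing inside the sum in the statement. Combining with the coefficient $(1+\sqrt{\Delta}\tau)D\eta_j^2$ from the first move and summing over $j = t-\tau,\dots,t-1$ yields the claimed bound.

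I expect the main obstacle to be the bookkeeping around the cross terms: being careful that the $\Sigma_{t,j}$ masks (and the filters $S^{\xi_j}_{u_j}$) are measurable in the right way so that the tower property lets one peel off expectations over $u_j$ (via Lemma~\ref{lem:expect}) and over $\xi_j$ (via the collision bound~\eqref{Masdfasdfasfa}) in the correct order, and that the $\hat{w}_j$ appearing in the bound are indeed $\mathcal{F}_j$-measurable so the inner conditional expectations go through. A secondary subtlety is handling the $t-\tau < 0$ edge case, which is dispatched by the convention (stated in the paper) that $t-\tau$ means $\max\{t-\tau,0\}$, so the sum is simply shorter and the same bound holds a fortiori. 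The factor $(1+\sqrt{\Delta}\tau)$ rather than $\tau^2$ is the payoff of using $\Delta$ in the cross-term estimate, and keeping track of that is the one place where a slightly clever constant-counting is needed rather than a crude bound.
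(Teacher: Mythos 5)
Your proposal is correct and follows essentially the same route as the paper's proof: expand the squared norm of the sum from~\eqref{eq:wt_what_2}, control the cross terms via the collision bound~\eqref{Masdfasdfasfa} to obtain the factor $(1+\sqrt{\Delta}\tau)$, drop the masks $I-\Sigma_{t,j}$ as contractions, apply Lemma~\ref{lem:expect} to get the factor $D$, and finish with Lemma~\ref{lem_bounded_secondmoment_04} together with $L$-smoothness at $w_*$ to reach $2L^2\mathbb{E}[\|\hat{w}_j-w_*\|^2]+N$. All the key ingredients and constants match the paper's argument.
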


\begin{proof}
As shown in~\eqref{eq:wt_what_2}, 
$$
w_t-\hat{w}_t = - \sum_{j=t-\tau}^{t-1}  \eta_j d_{\xi_j} (I-\Sigma_{t,j})  S^{\xi_j}_{u_j} \nabla  f(\hat{w}_j;\xi_j) .
$$
This can be used to derive an expression for the square of its norm: $\|w_t- \hat{w}_t \|^2$
\begin{eqnarray*}
&=& \|  \sum_{j=t-\tau}^{t-1}  \eta_j d_{\xi_j} (I-\Sigma_{t,j})  S^{\xi_j}_{u_j}  \nabla  f(\hat{w}_j;\xi_j) \|^2 \\
&=& \sum_{j=t-\tau}^{t-1} \| \eta_j d_{\xi_j}    (I-\Sigma_{t,j}) S^{\xi_j}_{u_j} \nabla   f(\hat{w}_j;\xi_j) \|^2   \\
&& + \sum_{i\neq j \in \{t-\tau, \ldots, t-1\}}
\ve{ \eta_j d_{\xi_j}   (I-\Sigma_{t,j}) S^{\xi_j}_{u_j} \nabla   f(\hat{w}_j;\xi_j)}{ \eta_i d_{\xi_i}   (I-\Sigma_{t,j}) S^{\xi_i}_{u_i}  \nabla  f(\hat{w}_i;\xi_i) }.
\end{eqnarray*}

Applying (\ref{Masdfasdfasfa}) to the inner products implies
%
%
%
%
\begin{eqnarray}
\|w_t- \hat{w}_t \|^2 
&\leq&  \sum_{j=t-\tau}^{t-1} \| \eta_j d_{\xi_j}   (I-\Sigma_{t,j}) S^{\xi_j}_{u_j} \nabla   f(\hat{w}_j;\xi_j) \|^2  \nonumber  \\
&& + \sum_{i\neq j \in \{t-\tau, \ldots, t-1\}}
[\| \eta_j d_{\xi_j}  (I-\Sigma_{t,j})  S^{\xi_j}_{u_j}  \nabla  f(\hat{w}_j;\xi_j)\|^2\nonumber \\ &&+ \| \eta_i d_{\xi_i}   (I-\Sigma_{t,j})  S^{\xi_i}_{u_i}  \nabla  f(\hat{w}_i;\xi_i) \|^2]\sqrt{\Delta}/2 \nonumber  \\
&=& (1+\sqrt{\Delta}\tau) \sum_{j=t-\tau}^{t-1} \| \eta_j d_{\xi_j}    (I-\Sigma_{t,j}) S^{\xi_j}_{u_j}  \nabla  f(\hat{w}_j;\xi_j) \|^2  \nonumber  \\
&\leq&   (1+\sqrt{\Delta}\tau) \sum_{j=t-\tau}^{t-1} \eta_j^2  \|  d_{\xi_j}    S^{\xi_j}_{u_j}  \nabla  f(\hat{w}_j;\xi_j) \|^2 . \nonumber 
\end{eqnarray}
Taking expectations shows
$$\mathbb{E}[\|w_t- \hat{w}_t \|^2 ] \leq   (1+\sqrt{\Delta}\tau) \sum_{j=t-\tau}^{t-1} \eta_j^2  \mathbb{E}[ \| d_{\xi_j}   S^{\xi_j}_{u_j}  \nabla  f(\hat{w}_j;\xi_j) \|^2].$$
Now, we can apply Lemma \ref{lem:expect}: We first take the expectation over $u_j$ and this shows
$$\mathbb{E}[\|w_t- \hat{w}_t \|^2 ] \leq  (1+\sqrt{\Delta}\tau) \sum_{j=t-\tau}^{t-1} \eta_j^2 D \mathbb{E}[ \|\nabla f(\hat{w}_j;\xi_j) \|^2 ].$$
From Lemma \ref{lem_bounded_secondmoment_04} we infer
\begin{equation} \mathbb{E}[ \|\nabla f(\hat{w}_j;\xi_j) \|^2 ] \leq 4L\mathbb{E}[F(\hat{w}_j)-F(w_{*})] +N \label{LN}
\end{equation}
and by $L$-smoothness, see Equation \ref{eq:Lsmooth} with $\nabla F(w_*) =0$,
$$F(\hat{w}_j)-F(w_*) \leq \frac{L}{2} \| \hat{w}_j -w_{*} \|^2.$$
Combining the above inequalities proves the lemma.
\end{proof}

 Together with the next lemma we will be able to start deriving a recursive inequality from which we will be able to derive a bound on the convergence rate.

\begin{lem}\label{lemma:hogwild_1_new1}
Let $0 < \eta_t \leq \frac{1}{4LD}$  for all $t \geq 0$. Then,
\begin{align*}
 \mathbb{E}[\| w_{t+1} - w_{*} \|^2 |\mathcal{F}_t] \leq \left(1 - \frac{\mu \eta_t}{2} \right) \| w_{t} - w_{*} \|^2 + [(L+\mu)\eta_t + 2L^2 \eta_t^2D] \| \hat{w}_t - w_{t}  \|^2 + 2 \eta^2_t D N. 
\end{align*}
\end{lem}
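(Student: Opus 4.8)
The plan is to expand $\|w_{t+1}-w_*\|^2$ using the recursion \eqref{eqw}, take conditional expectation given $\mathcal{F}_t$, and then handle the cross term by relating $\nabla F(\hat w_t)$ back to the state $w_t$. First I would write
$$
w_{t+1}-w_* = (w_t - w_*) - \eta_t d_{\xi_t} S^{\xi_t}_{u_t}\nabla f(\hat w_t;\xi_t),
$$
square, and take $\mathbb{E}[\cdot\,|\,\mathcal{F}_t]$. Using the second identity of Lemma \ref{lem:expect}, the cross term becomes $-2\eta_t \langle w_t - w_*, \nabla F(\hat w_t)\rangle$, and using the first identity of Lemma \ref{lem:expect} together with Lemma \ref{lem_bounded_secondmoment_04} (applied with $\hat w_t$ in place of $w$, and using $L$-smoothness $F(\hat w_t)-F(w_*)\le \tfrac{L}{2}\|\hat w_t-w_*\|^2$), the quadratic term is bounded by $\eta_t^2 D\,\mathbb{E}[\|\nabla f(\hat w_t;\xi_t)\|^2|\mathcal{F}_t] \le \eta_t^2 D(2L^2\|\hat w_t - w_*\|^2 + N)$. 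So far this gives an inequality in terms of $\|\hat w_t - w_*\|^2$ and the cross term; the work is to convert both into the desired $\|w_t-w_*\|^2$ and $\|\hat w_t - w_t\|^2$ quantities.

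For the cross term I would split $w_t - w_* = (w_t - \hat w_t) + (\hat w_t - w_*)$, so
$$
-\langle w_t - w_*, \nabla F(\hat w_t)\rangle = -\langle w_t - \hat w_t, \nabla F(\hat w_t)\rangle - \langle \hat w_t - w_*, \nabla F(\hat w_t)\rangle.
$$
For the second piece, strong convexity \eqref{eq:stronglyconvex_00} of $F$ (with $\nabla F(w_*)=0$) gives $\langle \hat w_t - w_*, \nabla F(\hat w_t)\rangle \ge \tfrac{\mu}{2}\|\hat w_t - w_*\|^2 + [F(\hat w_t)-F(w_*)] \ge \tfrac{\mu}{2}\|\hat w_t-w_*\|^2$; the nonnegative $F(\hat w_t)-F(w_*)$ term can be dropped (or, better, kept to absorb the $2L^2\eta_t^2 D\|\hat w_t - w_*\|^2$ term from the quadratic bound via $\eta_t \le \tfrac{1}{4LD}$, i.e.\ $2L^2\eta_t^2 D \cdot \tfrac{2}{L}\le \eta_t$). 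For the first piece I would use $-\langle w_t - \hat w_t, \nabla F(\hat w_t)\rangle \le \tfrac{\mu}{4}\|w_t - \hat w_t\|^2 + \tfrac{1}{\mu}\|\nabla F(\hat w_t)\|^2$ by Young's inequality (or a variant balancing the constants), and then bound $\|\nabla F(\hat w_t)\|^2 = \|\nabla F(\hat w_t) - \nabla F(w_*)\|^2 \le L\cdot$something — more cleanly, use $L$-smoothness of $F$ to write $\|\nabla F(\hat w_t)\|^2 \le 2L[F(\hat w_t)-F(w_*)]$ and again trade against strong convexity. The remaining $\tfrac{\mu}{4}\|w_t-\hat w_t\|^2$-type terms and the leftover multiples of $\|\hat w_t - w_*\|^2$ then need to be re-expressed: replace $\|\hat w_t - w_*\|^2 \le 2\|\hat w_t - w_t\|^2 + 2\|w_t - w_*\|^2$ wherever a clean $\|w_t-w_*\|^2$ coefficient is needed, so that the $\|w_t-w_*\|^2$ coefficient collapses to $1 - \tfrac{\mu\eta_t}{2}$ and everything else is collected into the $\|\hat w_t - w_t\|^2$ bucket and the additive $2\eta_t^2 DN$ bucket.

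The main obstacle is purely bookkeeping: making the constant in front of $\|w_t - w_*\|^2$ come out exactly as $1 - \mu\eta_t/2$ and the constant in front of $\|\hat w_t - w_t\|^2$ exactly as $(L+\mu)\eta_t + 2L^2\eta_t^2 D$, since there are several places where one can split a term between the "$\mu\eta_t$" budget and the "$\|\hat w_t - w_t\|^2$" budget, and the stepsize restriction $\eta_t\le \tfrac{1}{4LD}$ (hence also $\eta_t \le \tfrac14$, $2LD\eta_t \le \tfrac12$, etc.) must be invoked at just the right spots to kill the extra $\eta_t^2$ terms and to ensure the $F(\hat w_t)-F(w_*)$ slack is large enough to absorb the quadratic-growth contribution. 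I expect no conceptual difficulty beyond choosing the Young's-inequality weights consistently; the key structural inputs are Lemma \ref{lem:expect}, Lemma \ref{lem_bounded_secondmoment_04}, strong convexity \eqref{eq:stronglyconvex_00}, and $L$-smoothness \eqref{eq:Lsmooth}.
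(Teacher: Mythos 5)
Your overall skeleton matches the paper's: expand the recursion, condition on $\mathcal{F}_t$, use Lemma \ref{lem:expect} for the cross and quadratic terms, split $w_t-w_*=(w_t-\hat w_t)+(\hat w_t-w_*)$, apply strong convexity to the $\hat w_t-w_*$ piece, and convert $-\|\hat w_t-w_*\|^2$ into $-\tfrac12\|w_t-w_*\|^2+\|w_t-\hat w_t\|^2$. But the two places where you deviate are exactly where the stated constants come from, and your choices do not produce them. The critical step is the piece $-\langle \nabla F(\hat w_t), w_t-\hat w_t\rangle$. The paper does \emph{not} use Young's inequality here; it uses the $L$-smoothness descent inequality \eqref{eq:Lsmooth} to get $-\langle \nabla F(\hat w_t), w_t-\hat w_t\rangle \le F(\hat w_t)-F(w_t)+\tfrac{L}{2}\|\hat w_t-w_t\|^2$. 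The term $F(\hat w_t)-F(w_t)$ then cancels \emph{exactly} against the $-[F(\hat w_t)-F(w_*)]$ coming from strong convexity, leaving $-2\eta_t[F(w_t)-F(w_*)]$, which is eventually discarded via $\eta_t\le\tfrac{1}{4LD}$; the only contribution to the $\|\hat w_t-w_t\|^2$ bucket is $L\eta_t$. Your Young's-inequality route cannot reproduce this: with weight $\mu$ you are left with $+\tfrac{4L\eta_t}{\mu}[F(\hat w_t)-F(w_*)]=4\kappa\eta_t[F(\hat w_t)-F(w_*)]$, which for $\kappa=L/\mu>1$ cannot be absorbed by the available $-2\eta_t[F(\hat w_t)-F(w_*)]$; and if you instead choose the weight large enough ($c\ge 2L$) so that $\tfrac{2L}{c}[F(\hat w_t)-F(w_*)]$ \emph{is} absorbable, the resulting coefficient $c\,\eta_t\ge 2L\eta_t$ on $\|\hat w_t-w_t\|^2$ exceeds the claimed $(L+\mu)\eta_t+2L^2\eta_t^2 D$ whenever $\mu<L-2L^2\eta_t D$, which is the generic regime under $\eta_t\le\tfrac{1}{4LD}$. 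So your plan proves at best a weaker inequality, not the lemma as stated.

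The second deviation compounds this. You bound $\mathbb{E}[\|\nabla f(\hat w_t;\xi_t)\|^2|\mathcal{F}_t]$ by applying Lemma \ref{lem_bounded_secondmoment_04} at $\hat w_t$, producing $4L\eta_t^2D[F(\hat w_t)-F(w_*)]$ (or $2L^2\eta_t^2D\|\hat w_t-w_*\|^2$); absorbing this consumes half of the $-2\eta_t[F(\hat w_t)-F(w_*)]$ budget, which is precisely what forces $c\ge 2L$ above. The paper instead writes $\|\nabla f(\hat w_t;\xi_t)\|^2\le 2\|\nabla f(\hat w_t;\xi_t)-\nabla f(w_t;\xi_t)\|^2+2\|\nabla f(w_t;\xi_t)\|^2$, bounds the first part by $2L^2\|\hat w_t-w_t\|^2$ via \eqref{eq:Lsmooth_basic} and the second by Lemma \ref{lem_bounded_secondmoment_04} at $w_t$; this is where the $2L^2\eta_t^2D\|\hat w_t-w_t\|^2$ and $2\eta_t^2DN$ terms come from, and the resulting $8L\eta_t^2D[F(w_t)-F(w_*)]$ is killed by the \emph{same} $-2\eta_t[F(w_t)-F(w_*)]$ produced by the cancellation described above, using $\eta_t\le\tfrac{1}{4LD}$. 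Finally, note that for the negative term $-\mu\eta_t\|\hat w_t-w_*\|^2$ you need the inequality in the direction $-\|\hat w_t-w_*\|^2\le-\tfrac12\|w_t-w_*\|^2+\|w_t-\hat w_t\|^2$ (i.e.\ \eqref{eq_aaa002}), not the upper bound $\|\hat w_t-w_*\|^2\le 2\|\hat w_t-w_t\|^2+2\|w_t-w_*\|^2$ that you cite; the former is what yields $1-\tfrac{\mu\eta_t}{2}$ and the $\mu\eta_t$ part of the $\|\hat w_t-w_t\|^2$ coefficient.
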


\begin{proof}
Since $w_{t+1} = w_t - \eta_t d_{\xi_t}  S^{\xi_t}_{u_t} \nabla   f(\hat{w}_t;\xi_t)$, we have 
$$ \|w_{t+1}-w_{*}\|^2 = \|w_t-w_{*}\|^2 - 2\eta_t \langle d_{\xi_t}  S^{\xi_t}_{u_t} \nabla   f(\hat{w}_t;\xi_t), (w_t-w_{*})\rangle + \eta_t^2 \| d_{\xi_t}  S^{\xi_t}_{u_t}  \nabla  f(\hat{w}_t;\xi_t)\|^2.$$
We now take expectations over $u_t$ and $\xi_t$ and use Lemma \ref{lem:expect}:
\begin{eqnarray*}
&& \mathbb{E}[\| w_{t+1} - w_{*} \|^2 | \mathcal{F}_{t}] \\
&\leq &
   \| w_{t} - w_{*} \|^2 - 2 \eta_t \langle \nabla F(\hat{w}_t)  , (w_{t} - w_{*}) \rangle + \eta^2_t D \mathbb{E}[\|\nabla f(\hat{w}_t; \xi_t)\|^2 | \mathcal{F}_{t} ] \\
&=&  \| w_{t} - w_{*} \|^2 - 2 \eta_t \langle \nabla F(\hat{w}_t)  , (w_{t} - \hat{w}_t) \rangle - 2 \eta_t \langle \nabla F(\hat{w}_t)  , (\hat{w}_t - w_{*}) \rangle   \\
&&+ \eta^2_t D \mathbb{E}[\|\nabla f(\hat{w}_t; \xi_t)\|^2 | \mathcal{F}_{t} ].
\end{eqnarray*}
By \eqref{eq:stronglyconvex_00} and \eqref{eq:Lsmooth}, we have
\begin{align*}
- \langle \nabla F(\hat{w}_t)  , (\hat{w}_t - w_{*}) \rangle & \leq - [ F(\hat{w}_t) - F(w_{*}) ] - \frac{\mu}{2} \| \hat{w}_t - w_{*} \|^2, \mbox{ and} \tagthis \label{eq_newlemma_011} \\
- \langle \nabla F(\hat{w}_t)  , (w_{t} - \hat{w}_t) \rangle & \leq F(\hat{w}_t) - F(w_{t}) + \frac{L}{2} \| \hat{w}_t - w_{t}  \|^2  \tagthis \label{eq_newlemma_021} 
\end{align*}

Thus, $\mathbb{E}[\| w_{t+1} - w_{*} \|^2 | \mathcal{F}_{t}]$ is at most
\begin{align*}
& \overset{\eqref{eq_newlemma_011}\eqref{eq_newlemma_021}}{\leq} \| w_{t} - w_{*} \|^2 + 2 \eta_t [ F(\hat{w}_t) - F(w_{t})  ] + L\eta_t \| \hat{w}_t - w_{t}  \|^2  - 2 \eta_t[ F(\hat{w}_t ) - F(w_{*}) ]  \\ &\qquad  - \mu \eta_t \| \hat{w}_t - w_{*} \|^2+ \eta_t^2 D \mathbb{E}[\|\nabla f(\hat{w}_t; \xi_t)\|^2 | \mathcal{F}_{t} ] \\
&= \| w_{t} - w_{*} \|^2 - 2 \eta_t [ F(w_{t}) - F(w_{*})  ] + L\eta_t \| \hat{w}_t - w_{t}  \|^2  - \mu \eta_t \| \hat{w}_t - w_{*} \|^2  \\ &\qquad  + \eta^2_t D \mathbb{E}[\|\nabla f(\hat{w}_t; \xi_t)\|^2 | \mathcal{F}_{t} ].
\end{align*}

Since \begin{align*}
- \|\hat{w}_t - w_{*}\|^2 = - \|(w_{t} - w_{*}) - (w_{t} - \hat{w}_t) \|^2 \overset{\eqref{eq_aaa002}}{\leq} - \frac{1}{2} \| w_{t} - w_{*} \|^2 + \| w_{t} - \hat{w}_t \|^2, 
\end{align*}
$\mathbb{E}[\| w_{t+1} - w_{*} \|^2 | \mathcal{F}_{t}, \sigma_t] $ is at most 
$$
(1- \frac{\mu \eta_t}{2}) \| w_{t} - w_{*} \|^2 - 2 \eta_t [ F(w_{t}) - F(w_{*})  ] + (L+\mu)\eta_t \| \hat{w}_t- w_{t}  \|^2 
 + \eta^2_t D \mathbb{E}[\|\nabla f(\hat{w}_t; \xi_t)\|^2 | \mathcal{F}_{t} ].
$$

We now use $\| a \|^2 = \| a - b + b \|^2 \leq 2 \| a - b\|^2 + 2 \|b\|^2$ for $\mathbb{E}[\|\nabla f(\hat{w}_t; \xi_t)\|^2 | \mathcal{F}_{t} ]$ to obtain
\begin{equation}
\mathbb{E}[\|\nabla f(\hat{w}_t; \xi_t)\|^2 | \mathcal{F}_{t}] \leq 
 2 \mathbb{E}[\|\nabla f(\hat{w}_t; \xi_t)- \nabla f(w_{t}; \xi_t) \|^2 | \mathcal{F}_{t}] + 2 \mathbb{E}[\|\nabla f(w_{t}; \xi_t)\|^2 | \mathcal{F}_{t} ]. \label{ineq:Estuff}
\end{equation}
By Lemma \ref{lem_bounded_secondmoment_04}, we have
\begin{gather*}
\mathbb{E}[\|\nabla f(w_{t}; \xi_t)\|^2 | \mathcal{F}_{t}] \leq  4 L [ F(w_{t}) - F(w_{*}) ] + N. \tagthis \label{ineq:bounded_lemma3_hogwild}
\end{gather*} 
Applying \eqref{eq:Lsmooth_basic} twice\ gives 
$$\mathbb{E}[\|\nabla f(\hat{w}_t; \xi_t)- \nabla f(w_{t}; \xi_t) \|^2 | \mathcal{F}_{t},\sigma_t ]\leq L^2 \|\hat{w}_t - w_t \|^2$$
and  together with (\ref{ineq:Estuff}) and (\ref{ineq:bounded_lemma3_hogwild}) we obtain
$$\mathbb{E}[\|\nabla f(\hat{w}_t; \xi_t)\|^2 | \mathcal{F}_{t} ] \leq  2 L^2 \|\hat{w}_t - w_t \|^2 + 4 L [ F(w_{t}) - F(w_{*}) ] + N.$$
Plugging this into the previous derivation yields
 %
\begin{align*}
\mathbb{E}[\| w_{t+1} - w_{*} \|^2 | \mathcal{F}_{t}]& \leq 
(1- \frac{\mu \eta_t}{2}) \| w_{t} - w_{*} \|^2 - 2 \eta_t [ F(w_{t}) - F(w_{*})  ] + (L + \mu) \eta_t \| \hat{w}_{t} - w_{t}  \|^2 
 \\ &\qquad  + 2 L^2 \eta^2_t D \| \hat{w}_t - w_{t}  \|^2  + 8 L \eta^2_t D [F(w_{t}) - F(w_{*})] + 2\eta^2_t D N \\
&=(1- \frac{\mu \eta_t}{2}) \| w_{t} - w_{*} \|^2  + [(L+\mu)\eta_t + 2L^2\eta_t^2D] \| \hat{w}_t - w_{t}  \|^2  \\ &\qquad 
 - 2\eta_t( 1 - 4L\eta_tD) [F(w_{t}) - F(w_{*})] + 2\eta^2_t D N. 
\end{align*}
Since $\eta_t \leq \frac{1}{4LD}$, $-2\eta_t(1-4L\eta_tD) [F(w_{t}) - F(w_{*})]\leq 0$ (we can get a negative upper bound by applying strong convexity but this will not improve the asymptotic behavior of the convergence rate in our main result although it would improve the constant of the leading term making the final bound applied to SGD closer to the bound of Theorem \ref{thm_res_sublinear_new_02} for SGD),
$$\mathbb{E}[\| w_{t+1} - w_{*} \|^2 | \mathcal{F}_{t}] \leq \left(1 - \frac{\mu \eta_t}{2} \right) \| w_{t} - w_{*} \|^2 + [(L+\mu)\eta_t + 2L^2\eta_t^2D] \| \hat{w}_t - w_{t}  \|^2 + 2\eta^2_tD N$$ 
and this concludes the proof.
\end{proof} 

Assume 
$0 < \eta_t \leq \frac{1}{4LD}$  for all $t \geq 0$. Then,  after taking the full expectation of the inequality in  Lemma \ref{lemma:hogwild_1_new1}, we can
plug Lemma \ref{lemma:hogwild_21} into it which yields the recurrence
\begin{eqnarray} 
 \mathbb{E}[\| w_{t+1} - w_{*} \|^2]
&\leq& \left(1 - \frac{\mu \eta_t}{2} \right) \mathbb{E}[\| w_{t} - w_{*} \|^2] +  \nonumber  \\
&& [(L+\mu)\eta_t + 2L^2\eta_t^2D] (1+\sqrt{\Delta}\tau) D \sum_{j=t-\tau}^{t-1} \eta_j^2 (2L^2 \mathbb{E}[\| \hat{w}_j -w_{*} \|^2] +N) \nonumber \\ &&+ 2\eta^2_tD N. \label{eq:rec} \end{eqnarray}
 This can be solved by using the next lemma. For completeness, we follow the convention that an empty product is equal to 1 and an empty sum is equal to 0, i.e., 
\begin{gather*}
\prod_{i=h}^k g_i = 1 \text{ and } \sum_{i=h}^k g_i =0 \text{ if } k<h. \tagthis \label{eq:rec00} 
\end{gather*}

\begin{lem}
\label{lemma:hogwild_recursive_form}
Let $Y_t, \beta_t$ and $\gamma_t$ be sequences such that $Y_{t+1} \leq \beta_t Y_t + \gamma_t$, for all $t\geq 0$. Then,
\begin{equation}
 \label{hogwild:recursive_form}
Y_{t+1} \leq (\sum_{i=0}^t[\prod_{j=i+1}^t \beta_j]\gamma_i)+(\prod_{j=0}^t\beta_j)Y_0. 
\end{equation}
\end{lem}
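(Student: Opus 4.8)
The plan is to prove the recursion by straightforward induction on $t$, following the convention in~\eqref{eq:rec00} for empty products and sums. The base case $t=0$ amounts to checking that~\eqref{hogwild:recursive_form} reduces to exactly the hypothesis $Y_1 \leq \beta_0 Y_0 + \gamma_0$: the sum $\sum_{i=0}^0 [\prod_{j=1}^0 \beta_j]\gamma_i$ has a single term $i=0$ whose product is empty, hence equal to $1$, giving $\gamma_0$; and $\prod_{j=0}^0 \beta_j = \beta_0$, so the right-hand side is $\gamma_0 + \beta_0 Y_0$, matching the assumed bound for $t=0$.

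For the inductive step, I would assume~\eqref{hogwild:recursive_form} holds for some $t\geq 0$ and prove it for $t+1$. Starting from the hypothesis $Y_{t+2}\leq \beta_{t+1} Y_{t+1} + \gamma_{t+1}$, I substitute the inductive bound on $Y_{t+1}$:
\begin{align*}
Y_{t+2} &\leq \beta_{t+1}\left[\left(\sum_{i=0}^t\Big[\prod_{j=i+1}^t \beta_j\Big]\gamma_i\right)+\Big(\prod_{j=0}^t\beta_j\Big)Y_0\right] + \gamma_{t+1}.
\end{align*}
Then I would push $\beta_{t+1}$ inside: since $\beta_{t+1}\prod_{j=i+1}^t\beta_j = \prod_{j=i+1}^{t+1}\beta_j$ for each $i\le t$, and $\beta_{t+1}\prod_{j=0}^t\beta_j = \prod_{j=0}^{t+1}\beta_j$, and the leftover term $\gamma_{t+1}$ equals $[\prod_{j=t+2}^{t+1}\beta_j]\gamma_{t+1}$ (an empty product), absorbing it as the $i=t+1$ term of the sum yields exactly
$$Y_{t+2} \leq \left(\sum_{i=0}^{t+1}\Big[\prod_{j=i+1}^{t+1}\beta_j\Big]\gamma_i\right)+\Big(\prod_{j=0}^{t+1}\beta_j\Big)Y_0,$$
which is~\eqref{hogwild:recursive_form} with $t$ replaced by $t+1$. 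This closes the induction.

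I do not anticipate a serious obstacle here; this is a routine unrolling of a linear recurrence. The only point requiring care is bookkeeping with the empty-product/empty-sum conventions at the boundary indices (the $i=t+1$ term and the base case), which is exactly why the convention~\eqref{eq:rec00} was stated just beforehand. One might also note for cleanliness that no sign or positivity assumptions on $\beta_t,\gamma_t,Y_t$ are actually needed for the algebraic identity to go through, though in the application~\eqref{eq:rec} all these quantities are nonnegative, so multiplying an inequality through by $\beta_{t+1}\ge 0$ preserves its direction.
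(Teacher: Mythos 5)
Your proposal is correct and uses essentially the same argument as the paper: a straightforward induction that unrolls the linear recurrence, using the empty-product/empty-sum convention of~\eqref{eq:rec00} to absorb the boundary term. The only quibble is your aside that no sign assumptions are needed --- the inductive step multiplies the inequality through by $\beta_{t+1}$, so $\beta_{t+1}\geq 0$ is in fact required (as you note, it holds in the application, and the paper's own proof glosses over the same point).
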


\begin{proof}
We prove the lemma by using induction. It is obvious that \eqref{hogwild:recursive_form} is true for $t=0$ because $Y_1\leq \beta_1Y_0+ \gamma_1$. Assume as induction hypothesis that \eqref{hogwild:recursive_form} is true for $t-1$. Since $Y_{t+1} \leq \beta_t Y_t + \gamma_t$,
\begin{align*}
Y_{t+1} &\leq \beta_t Y_t + \gamma_t \\
&\leq \beta_{t}[(\sum_{i=0}^{t-1}[\prod_{j=i+1}^{t-1} \beta_j]\gamma_i)+(\prod_{j=0}^{t-1}\beta_j)Y_0] + \gamma_{t} \\
&\overset{\eqref{eq:rec00}}{=} (\sum_{i=0}^{t-1} \beta_{t}[\prod_{j=i+1}^{t-1} \beta_j]\gamma_i)+\beta_{t}(\prod_{j=0}^{t-1}\beta_j)Y_0 + (\prod_{j=t+1}^{t}\beta_j)\gamma_{t}\\
&= [(\sum_{i=0}^{t-1} [\prod_{j=i+1}^{t} \beta_j]\gamma_i)+ (\prod_{j=t+1}^{t}\beta_j)\gamma_{t}] +(\prod_{j=0}^{t}\beta_j)Y_0 \\
&= (\sum_{i=0}^{t}[\prod_{j=i+1}^{t} \beta_j]\gamma_i)+(\prod_{j=0}^{t}\beta_j)Y_0.
\end{align*}
\end{proof} 

Applying the above lemma to (\ref{eq:rec}) will yield the following bound.
 
\begin{lem}\label{lemma:hogwild_4_new1} 
Let $\eta_t = \frac{\alpha_t}{\mu(t+E)}$ with $4\leq \alpha_t \leq\alpha$ and $E = \max\{ 2\tau, \frac{4 L \alpha D}{\mu}\}$. Then, 
\begin{align*}
\mathbb{E}[\| w_{t+1} - w_{*} \|^2] & \leq \frac{\alpha^2D }{\mu^2} \frac{1}{(t + E - 1)^2} \cdot \\ & \qquad \cdot \left(\sum_{i=1}^t
 \left[ 4 a_i (1+\sqrt{\Delta}\tau)   [ N\tau + 2L^2 \sum_{j=i-\tau}^{i-1} \mathbb{E}[\| \hat{w}_j -w_* \|^2 ] +
 2 N \right]
\right)\\ &+ \frac{(E+1)^2}{(t + E - 1)^2} \mathbb{E}[\| w_{0} - w_{*} \|^2],
\end{align*} 
where $a_i = (L+\mu)\eta_i + 2L^2\eta_i^2D$.
\end{lem}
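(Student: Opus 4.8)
The plan is to read the recurrence \eqref{eq:rec} as a scalar recursion of the type solved by Lemma \ref{lemma:hogwild_recursive_form} and then estimate the resulting products and coefficients. Set $Y_t=\mathbb{E}[\|w_t-w_*\|^2]$, $\beta_t=1-\frac{\mu\eta_t}{2}$, and
$$\gamma_t=a_t(1+\sqrt{\Delta}\tau)D\sum_{j=t-\tau}^{t-1}\eta_j^2\big(2L^2\mathbb{E}[\|\hat w_j-w_*\|^2]+N\big)+2\eta_t^2DN,$$
with $a_t=(L+\mu)\eta_t+2L^2\eta_t^2D$, so that \eqref{eq:rec} is exactly $Y_{t+1}\le\beta_tY_t+\gamma_t$. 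First I would verify admissibility of the stepsize: since $\alpha_t\le\alpha$ and $E\ge\frac{4L\alpha D}{\mu}$ we get $\eta_t=\frac{\alpha_t}{\mu(t+E)}\le\frac{\alpha}{\mu E}\le\frac{1}{4LD}$, which is the hypothesis behind \eqref{eq:rec} (Lemma \ref{lemma:hogwild_1_new1}); one then checks $0<\beta_t<1$, so every product appearing below is a product of numbers in $(0,1)$ and all manipulations preserve inequalities.

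Applying Lemma \ref{lemma:hogwild_recursive_form} yields $Y_{t+1}\le\sum_{i=0}^t\big(\prod_{j=i+1}^t\beta_j\big)\gamma_i+\big(\prod_{j=0}^t\beta_j\big)Y_0$, so it remains to bound the products and the $\gamma_i$. For the products, $\alpha_j\ge4$ gives $\beta_j=1-\frac{\alpha_j}{2(j+E)}\le\frac{j+E-2}{j+E}$, and the right-hand side telescopes: $\prod_{j=i+1}^t\frac{j+E-2}{j+E}=\frac{(i+E-1)(i+E)}{(t+E-1)(t+E)}$, and likewise $\prod_{j=0}^t\beta_j\le\frac{(E-2)(E-1)}{(t+E-1)(t+E)}\le\frac{(E+1)^2}{(t+E-1)^2}$, which already delivers the last term of the claimed bound. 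For $\gamma_i$, I would use $\eta_i^2\le\frac{\alpha^2}{\mu^2(i+E)^2}$ for the $2\eta_t^2DN$ part, and for the delayed part note that $j\ge i-\tau$ and $E\ge2\tau$ imply $j+E\ge i+E-\tau\ge\frac{i+E}{2}$, hence $\eta_j^2\le\frac{4\alpha^2}{\mu^2(i+E)^2}$; since the inner sum has at most $\tau$ terms, $\sum_{j=i-\tau}^{i-1}N\le N\tau$, and therefore
$$\gamma_i\le\frac{\alpha^2D}{\mu^2(i+E)^2}\Big(4a_i(1+\sqrt{\Delta}\tau)\big[N\tau+2L^2\sum_{j=i-\tau}^{i-1}\mathbb{E}[\|\hat w_j-w_*\|^2]\big]+2N\Big).$$

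Finally I would combine the two estimates: $\big(\prod_{j=i+1}^t\beta_j\big)\frac{1}{(i+E)^2}=\frac{i+E-1}{i+E}\cdot\frac{1}{(t+E-1)(t+E)}\le\frac{1}{(t+E-1)^2}$, so each summand is at most $\frac{\alpha^2D}{\mu^2(t+E-1)^2}$ times the parenthesis in the $\gamma_i$-bound; summing over $i$ and adding the $Y_0$-term gives precisely the statement. The step where I would be most careful is the boundary bookkeeping dictated by the empty-sum/empty-product convention \eqref{eq:rec00}: when $t<\tau$ the symbol $t-\tau$ must be read as $0$, so the inner sums honestly have fewer than $\tau$ terms (which only sharpens the bounds), and the $i=0$ term has an empty delayed sum, i.e.\ $\gamma_0=2\eta_0^2DN$, consistent with the $i=0$ instance of the $\gamma_i$-bound and foldable into the sum when it is re-indexed to start at $i=1$ (absorbed harmlessly into the slack already present in $(E+1)^2$ and the leading $2N$). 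Beyond this indexing care, no real obstacle is expected; the proof is the routine ``solve-and-telescope'' pattern once stepsize admissibility and the $E\ge2\tau$ comparison of neighbouring stepsizes are in place.
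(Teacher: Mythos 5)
Your proposal is correct and follows essentially the same route as the paper's proof: verify $\eta_t\le\frac{1}{4LD}$ from $E\ge\frac{4L\alpha D}{\mu}$, read \eqref{eq:rec} as $Y_{t+1}\le\beta_tY_t+\gamma_t$, solve with Lemma \ref{lemma:hogwild_recursive_form}, telescope $\prod(1-\frac{2}{j+E})$ using $\alpha_j\ge4$, and control the delayed stepsizes via $E\ge2\tau$ exactly as in \eqref{etatR}. Your handling of the $i=0$ boundary term and the empty-sum convention is if anything slightly more careful than the paper's own bookkeeping.
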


\begin{proof}
Notice that we may use   (\ref{eq:rec}) because  $\eta_t \leq \frac{1}{4LD}$ follows from $\eta_t = \frac{\alpha_t}{\mu(t+E)} \leq \frac{\alpha}{\mu(t+E)}$ combined with $E\geq \frac{4 L \alpha D}{\mu}$. From (\ref{eq:rec}) with $a_t = (L+\mu)\eta_t + 2L^2\eta_t^2D$  and $\eta_t$ being decreasing in $t$ we infer $ \mathbb{E}[\| w_{t+1} - w_{*} \|^2]$
\begin{eqnarray*}
 \leq&\left(1 - \frac{\mu \eta_t}{2} \right) \mathbb{E}[\| w_{t} - w_{*} \|^2] + a_t (1+\sqrt{\Delta}\tau) D \eta_{t-\tau}^2 \sum_{j=t-\tau}^{t-1}(2L^2\mathbb{E}[\| \hat{w}_j -w_{*} \|^2 ] +N) \\ &+ 2\eta^2_tD N \\
 =& \left(1 - \frac{\mu \eta_t}{2} \right) \mathbb{E}[\| w_{t} - w_{*} \|^2] + a_t (1+\sqrt{\Delta}\tau) D \eta_{t-\tau}^2 [ N\tau + 2L^2 \sum_{j=t-\tau}^{t-1} \mathbb{E}[\| \hat{w}_j -w_{*} \|^2 ]  \\ &+ 2\eta^2_tD N.
 \end{eqnarray*}
 
 Since $E\geq 2\tau$,  $\frac{1}{t-\tau+E}\leq \frac{2}{t+E}$. Hence, together with  $\eta_{t-\tau} = \frac{\alpha_{t-\tau}}{\mu(t-\tau+E)} \leq \frac{\alpha}{\mu(t-\tau+E)}$ we have
 \begin{equation} \eta_{t-\tau}^2 \leq \frac{4\alpha^2}{\mu^2}\frac{1}{(t+E)^2}. \label{etatR} \end{equation}
 This translates the above bound into
 \begin{align*}
\mathbb{E}[\| w_{t+1} - w_{*} \|^2] & \leq \beta_t \mathbb{E}[\| w_{t} - w_{*} \|^2] + \gamma_t,
\end{align*}
for 
\begin{align*}
\beta_t &= 1 - \frac{\mu \eta_t}{2}, \\
\gamma_t &=  4 a_t (1+\sqrt{\Delta}\tau)  D  \frac{\alpha^2}{\mu^2}\frac{1}{(t+E)^2} [ N\tau + 2L^2 \sum_{j=t-\tau}^{t-1} \mathbb{E}[\| \hat{w}_j -w_{*} \|^2 ] + 2\eta^2_tD N, where \\
a_t &=  (L+\mu)\eta_t + 2L^2\eta_t^2D.
\end{align*}

Application of Lemma \ref{lemma:hogwild_recursive_form} for $Y_{t+1} = \mathbb{E}[\| w_{t+1} - w_{*} \|^2]$ and $Y_{t} = \mathbb{E}[\| w_{t} - w_{*} \|^2]$ gives
\begin{align*}
\mathbb{E}[\| w_{t+1} - w_{*} \|^2] \leq \left(\sum_{i=0}^t \left[\prod_{j=i+1}^t \left(1 - \frac{\mu \eta_j}{2} \right) \right]\gamma_i \right)+\left(\prod_{j=0}^t \left(1 - \frac{\mu \eta_j}{2} \right) \right) \mathbb{E}[\| w_{0} - w_{*} \|^2]. 
\end{align*}
In order to analyze this formula, since $\eta_j = \frac{\alpha_j}{\mu(j + E)}$ with $\alpha_j\geq 4$, we have
\begin{align*}
1 - \frac{\mu \eta_j}{2} &= 1 - \frac{\alpha_j}{2(j+E)} \leq 1 - \frac{2}{j +E}, 
\end{align*}
Hence (we can also use $1-x\leq e^{-x}$ which leads to similar results and can be used to show that our choice for $\eta_t$ leads to the tightest convergence rates in our framework), 
\begin{align*}
\prod_{j=i}^t \left(1 - \frac{\mu \eta_j}{2} \right) &\leq \prod_{j=i}^t \left(1-\frac{2}{j+E}\right) =  \prod_{j=i}^t \frac{j+E-2}{j+E}\\
 &=\frac{i+E-2}{i+E}\frac{i+E-1}{i+E+1}\frac{i+E}{i+E+2}\frac{i+E+1}{i+E+3}\dotsc\frac{t+E-3}{t+E-1}\frac{t+E-2}{t+E} \\
 &=\frac{(i+E-2)(i+E-1)}{(t+E-1)(t+E)} \leq \frac{(i+E-1)^2}{(t+E-1)(t+E)} \leq \frac{(i+E)^2}{(t+E-1)^2}   .  
\end{align*}


From this calculation  we infer that
\begin{align*}
\mathbb{E}[\| w_{t+1} - w_{*} \|^2] \leq \left(\sum_{i=0}^t \left[ \frac{(i + E)^2}{(t + E - 1)^2} \right]\gamma_i \right)+ \frac{(E+1)^2}{(t + E - 1)^2} \mathbb{E}[\| w_{0} - w_{*} \|^2]. \tagthis \label{eq_newlemma02_041}
\end{align*}

Now, we substitute  $\eta_i \leq \frac{\alpha}{\mu(i + E)}$ in $\gamma_i$ and compute
\begin{align*}
&  \frac{(i + E)^2}{(t + E - 1)^2} \gamma_i \\
&=  \frac{(i + E)^2}{(t + E - 1)^2}
 4 a_i (1+\sqrt{\Delta}\tau)  D  \frac{\alpha^2}{\mu^2}\frac{1}{(i+E)^2} [ N\tau + 2L^2 \sum_{j=i-\tau}^{i-1} \mathbb{E}[\| \hat{w}_j -w_* \|^2 ] 
\\   &+  \frac{(i + E)^2}{(t + E - 1)^2} 2 N D \frac{\alpha^2}{\mu^2(i + E)^2} \\
 &= \frac{\alpha^2D }{\mu^2} \frac{1}{(t + E - 1)^2} \left[ 4 a_i (1+\sqrt{\Delta}\tau)   [ N\tau + 2L^2 \sum_{j=i-\tau}^{i-1} \mathbb{E}[\| \hat{w}_j -w_{*} \|^2 ] +
 2 N \right]. 
\end{align*}
Substituting this in \eqref{eq_newlemma02_041} proves the lemma. 
\end{proof}

As an immediate corollary we can apply the inequality $\|a+b\|^2\leq 2\|a\|^2+2\|b\|^2$ to $\mathbb{E}[\|\hat{w}_{t+1} - w_* \|^2]$ to obtain 
\begin{equation}
\label{eq:eq11111}
\mathbb{E}[\|\hat{w}_{t+1} - w_* \|^2] \leq 2\mathbb{E}[\|\hat{w}_{t+1} - w_{t+1} \|^2]
+2\mathbb{E}[\|w_{t+1} - w_*\|^2],
\end{equation}
which in turn can be bounded by the previous lemma together with Lemma \ref{lemma:hogwild_21}: $\mathbb{E}[\|\hat{w}_{t+1} - w_* \|^2]$
\begin{eqnarray*}
& \leq& 2(1+\sqrt{\Delta}\tau)  D \sum_{j=t+1-\tau}^{t} \eta_j^2 (2L^2 \mathbb{E}[\| \hat{w}_j -w_{*} \|^2] +N) + \\
&& 2\frac{\alpha^2D }{\mu^2} \frac{1}{(t + E - 1)^2} \left(\sum_{i=1}^t
 \left[ 4 a_i (1+\sqrt{\Delta}\tau)   [ N\tau + 2L^2 \sum_{j=i-\tau}^{i-1} \mathbb{E}[\| \hat{w}_j -w_{*} \|^2 ] +
 2 N \right]
\right)  + \\
&& \frac{(E+1)^2}{(t + E - 1)^2} \mathbb{E}[\| w_{0} - w_{*} \|^2]. 
\end{eqnarray*}

Now assume a decreasing sequence $Z_t$ for which we want to prove that $\mathbb{E}[\| \hat{w}_{t} - w_{*} \|^2]\leq Z_t$ by induction in $t$. Then, the above bound can be used together with the property that $Z_t$ and $\eta_t$ are decreasing in $t$ to show

\begin{eqnarray*}
\sum_{j=t+1-\tau}^{t} \eta_j^2 (2L^2 \mathbb{E}[\| \hat{w}_j -w_* \|^2] +N) &\leq& \tau  \eta_{t-\tau}^2 (2L^2 Z_{t+1-\tau} +N) \\  &\leq&
4\tau \frac{\alpha^2}{\mu^2}\frac{1}{(t+E-1)^2}  (2L^2 Z_{t+1-\tau} +N)
\end{eqnarray*}

where the last inequality follows from (\ref{etatR}), and
$$\sum_{j=i-\tau}^{i-1} \mathbb{E}[\| \hat{w}_j -w_{*} \|^2 ]\leq \tau Z_{i-\tau}.$$
From these inequalities we infer
\begin{eqnarray}
\mathbb{E}[\|\hat{w}_{t+1} - w_* \|^2]
& \leq& 8(1+\sqrt{\Delta}\tau) \tau D \frac{\alpha^2}{\mu^2}\frac{1}{(t+E-1)^2}  (2L^2 Z_{t+1-\tau} +N) + \nonumber \\
&& 2\frac{\alpha^2D }{\mu^2} \frac{1}{(t + E - 1)^2} \left(\sum_{i=1}^t
 \left[ 4 a_i (1+\sqrt{\Delta}\tau)  [ N\tau + 2L^2 \tau Z_{i-\tau}] +
 2 N \right]
\right)  + \nonumber \\
&& \frac{(E+1)^2}{(t + E - 1)^2} \mathbb{E}[\| w_{0} - w_{*} \|^2] . \label{bestbound}
\end{eqnarray}

Even if we assume a constant $Z\geq Z_0\geq Z_1\geq Z_2 \geq \ldots$, we can get a first bound on the convergence rate of vectors $\hat{w}^t$: Substituting $Z$ gives
\begin{eqnarray}
\mathbb{E}[\|\hat{w}_{t+1} - w_* \|^2]
& \leq& 8(1+\sqrt{\Delta}\tau) \tau D \frac{\alpha^2}{\mu^2}\frac{1}{(t+E-1)^2}  (2L^2 Z +N) + \nonumber \\
&& 2\frac{\alpha^2D }{\mu^2} \frac{1}{(t + E - 1)^2} \left(\sum_{i=1}^t
 \left[ 4 a_i (1+\sqrt{\Delta}\tau)   [ N\tau + 2L^2 \tau Z] +
 2 N \right]
\right)  + \nonumber \\
&& \frac{(E+1)^2}{(t + E - 1)^2} \mathbb{E}[\| w_{0} - w_{*} \|^2] . \label{eqforGen}
\end{eqnarray}

Since $a_i = (L+\mu)\eta_i + 2L^2\eta_i^2 D$ and $\eta_i \leq \frac{\alpha}{\mu(i + E)}$, we have
\begin{align*}
\sum_{i=1}^t a_i &= (L+\mu) \sum_{i=1}^t \eta_i + 2 L^2 D \sum_{i=1}^t \eta_i^2 \\
&\leq (L+\mu) \sum_{i=1}^t \frac{\alpha}{\mu(i + E)} + 2 L^2 D\sum_{i=1}^t \frac{\alpha^2}{\mu^2(i + E)^2} \\
&\leq \frac{(L+\mu)\alpha}{\mu} \sum_{i=1}^t \frac{1}{i} + \frac{2 L^2 \alpha^2D}{\mu^2} \sum_{i=1}^t \frac{1}{i^2} \\
&\leq \frac{(L+\mu)\alpha}{\mu} (1 + \ln t) + \frac{ L^2 \alpha^2 D\pi^2}{3 \mu^2}, \tagthis \label{eq_newlemma03_0211}
\end{align*}
where the last inequality is a property of the harmonic sequence $\sum_{i=1}^t \frac{1}{i}\leq 1 + \ln t$ and $\sum_{i=1}^t \frac{1}{i^2} \leq \sum_{i=1}^{\infty} \frac{1}{i^2} = \frac{\pi^2}{6}$. 

Substituting (\ref{eq_newlemma03_0211}) in (\ref{eqforGen}) and collecting terms yields
\begin{eqnarray}
&& \mathbb{E}[\|\hat{w}_{t+1} - w_* \|^2] \leq \nonumber \\
&&
2\frac{\alpha^2D }{\mu^2} \frac{1}{(t + E - 1)^2} \left(2N t + 
4(1+\sqrt{\Delta}\tau) \tau [N+2L^2Z] \left\{ \frac{(L+\mu)\alpha}{\mu} (1 + \ln t) + \frac{ L^2 \alpha^2 D\pi^2}{3 \mu^2 +1} \right\}
\right)  \nonumber  \\
&& + \frac{(E+1)^2}{(t + E - 1)^2} \mathbb{E}[\| w_{0} - w_{*} \|^2] .\label{refbound}
\end{eqnarray}
Notice that the asymptotic behavior in $t$ is dominated by the term
$$ \frac{4\alpha^2D N}{\mu^2} \frac{t}{(t + E - 1)^2}.$$
If we define $Z_{t+1}$ to be the right hand side of (\ref{refbound}) and observe that this $Z_{t+1}$ is decreasing and a constant $Z$ exists (since the terms with $Z$ decrease much faster in $t$ compared to the dominating term), then  this $Z_{t+1}$ satisfies the derivations done above  and a proof by induction can be completed. 

Our derivations prove our main result: The expected convergence rate of read vectors is 
 \begin{eqnarray*}
\mathbb{E}[\|\hat{w}_{t+1} - w_* \|^2]
& \leq&  \frac{4\alpha^2D N}{\mu^2} \frac{t}{(t + E - 1)^2} + O\left(\frac{\ln t}{(t+E-1)^{2}}\right).
\end{eqnarray*}
We can use this result in Lemma \ref{lemma:hogwild_4_new1} in order to show that
  the expected convergence rate $\mathbb{E}[\|w_{t+1} - w_* \|^2]$ satisfies the same bound. 

We remind the reader, that in the $(t+1)$-th iteration at most $\leq \lceil |D_{\xi_t}|/D \rceil$ vector positions are updated. Therefore the expected number of single vector entry updates is at most $\bar{\Delta}_D /D$.



\vspace{.3cm}

\noindent
\textbf{Theorem~\ref{theorem:Hogwild_newnew1}}. 
 \textit{Suppose Assumptions \ref{ass_stronglyconvex}, \ref{ass_smooth}, \ref{ass_convex} and \ref{ass_tau}  and  consider Algorithm~\ref{HogWildAlgorithm}. Let  $\eta_t = \frac{\alpha_t}{\mu(t+E)}$ with $4\leq \alpha_t \leq\alpha$ and $E = \max\{ 2\tau, \frac{4 L \alpha D}{\mu}\}$. Then, $t' = t \bar{\Delta}_D /D$ is the expected number of single vector entry updates after $t$ iterations and
  \begin{eqnarray*}
 \mathbb{E}[\|\hat{w}_{t} - w_* \|^2] &\leq \frac{4\alpha^2D N}{\mu^2} \frac{t}{(t + E - 1)^2} + O\left(\frac{\ln t}{(t+E-1)^{2}}\right),\\
 \mathbb{E}[\|w_{t} - w_* \|^2]  &\leq \frac{4\alpha^2D N}{\mu^2} \frac{t}{(t + E - 1)^2} + O\left(\frac{\ln t}{(t+E-1)^{2}}\right),
\end{eqnarray*} 
where $N = 2 \mathbb{E}[ \|\nabla f(w_{*}; \xi)\|^2 ]$ and $w_{*} = \arg \min_w F(w)$. 
}

\subsection{Convergence without Convexity of Component Functions}\label{subsec_withoutconvex}
 
 For the non-convex case of the component functions, $L$ in (\ref{LN}) must be replaced by $L\kappa$ and as a result $L^2$ in Lemma \ref{lemma:hogwild_21} must be replaced by $L^2\kappa$.  Also $L$ in (\ref{ineq:bounded_lemma3_hogwild}) must be replaced by $L\kappa$. We now require that $\eta_t \leq \frac{1}{4L\kappa D}$ so that $-2\eta_t(1-4L\kappa \eta_tD) [F(w_{t}) - F(w_{*})]\leq 0$. This leads to Lemma \ref{lemma:hogwild_1_new1} where no changes are needed except requiring  $\eta_t\leq \frac{1}{4L\kappa D}$. The changes in Lemmas \ref{lemma:hogwild_21} and \ref{lemma:hogwild_1_new1}  lead to a Lemma \ref{lemma:hogwild_4_new1} where we require $E\geq \frac{4L\kappa \alpha D}{\mu}$ and where in the bound of the expectation $L^2$ must be replaced by $L^2\kappa$. This perculates through to inequality (\ref{refbound})  with a similar change finally leading to Theorem \ref{thm_6}, i.e., Theorem \ref{theorem:Hogwild_newnew1} where we only need to strengthen the condition on $E$ to $E\geq \frac{4L\kappa \alpha D}{\mu}$ in order to remove Assumption \ref{ass_convex}.
 
 \subsection{Sensitivity to $\tau$}\label{subsec_sens_tau}
 
 What about the upper bound's sensitivity with respect to $\tau$?
 Suppose $\tau$ is not a constant but an increasing function of $t$, which also makes $E$ a function of $t$:
 $$ \frac{2 L \alpha D}{\mu}\leq \tau(t) \leq t  \mbox{ and } E(t)=2\tau(t).$$
 In order to obtain a similar theorem we  increase the lower bound on $\alpha_t$ to 
 $$ 12\leq \alpha_t \leq \alpha.$$
 This allows us to modify the proof of Lemma \ref{lemma:hogwild_4_new1} where we analyse the product
 $$ \prod_{j=i}^t \left(1 - \frac{\mu \eta_j}{2} \right).$$
 Since $\alpha_j\geq 12$ and $E(j)=2\tau(j)\leq 2j$,
 $$ 1-\frac{\mu \eta_j}{2} = 1 - \frac{\alpha_j}{2(j+E(j))}\leq 1- \frac{12}{2(j+2j)}=1-\frac{2}{j}\leq 1-\frac{2}{j+1}.$$
 The remaining part of the proof of Lemma \ref{lemma:hogwild_4_new1} continues as before where constant $E$ in the proof is replaced by $1$. This yields
 instead of (\ref{eq_newlemma02_041})  \begin{align*}
\mathbb{E}[\| w_{t+1} - w_{*} \|^2] \leq \left(\sum_{i=1}^t \left[ \frac{(i+1)^2}{t^2} \right]\gamma_i \right)+ \frac{4}{t^2} \mathbb{E}[\| w_{0} - w_{*} \|^2]. 
\end{align*}
We again substitute  $\eta_i \leq \frac{\alpha}{\mu(i + E(i))}$ in $\gamma_i$, realize that $\frac{(i+1)}{(i+E(i))}\leq 1$, and compute
\begin{align*}
& \frac{(i + 1)^2}{t^2} \gamma_i \\
 &=  \frac{(i + 1)^2}{t^2}
 4 a_i (1+\sqrt{\Delta}\tau(i)) D  \frac{\alpha^2}{\mu^2}\frac{1}{(i+E(i))^2} [ N\tau(i) + 2L^2 \sum_{j=i-\tau(i)}^{i-1} \mathbb{E}[\| \hat{w}_j -w_* \|^2 ] 
  \\ &+  \frac{(i + 1)^2}{t^2} 2 N D \frac{\alpha^2}{\mu^2(i + E(i))^2} \\
 &\leq \frac{\alpha^2D }{\mu^2} \frac{1}{t^2} \left[ 4 a_i (1+\sqrt{\Delta}\tau(i))  [ N\tau(i) + 2L^2 \sum_{j=i-\tau(i)}^{i-1} \mathbb{E}[\| \hat{w}_j -w_{*} \|^2 ] +
 2 N \right]. 
\end{align*}
This gives a new Lemma  \ref{lemma:hogwild_4_new1}:

\begin{lem}\label{lemma:hogwild_4_new1_Gen} Assume  $\frac{2 L \alpha D}{\mu}\leq \tau(t) \leq t$ with $\tau(t)$ monotonic increasing.
Let $\eta_t = \frac{\alpha_t}{\mu(t+E(t))}$ with $12\leq \alpha_t \leq\alpha$ and $E(t) = 2\tau(t)$. Then, 
\begin{align*}
\mathbb{E}[\| w_{t+1} - w_{*} \|^2] & \leq 
 \frac{\alpha^2D }{\mu^2} \frac{1}{t^2} \cdot \\ & \qquad \cdot \left(\sum_{i=1}^t
 \left[ 4 a_i (1+\sqrt{\Delta}\tau(i))  [ N\tau(i) + 2L^2 \sum_{j=i-\tau(i)}^{i-1} \mathbb{E}[\| \hat{w}_j -w_* \|^2 ] +
 2 N \right]
\right) \\ & \quad + \frac{4}{t^2} \mathbb{E}[\| w_{0} - w_{*} \|^2],
\end{align*} 
where $a_i = (L+\mu)\eta_i + 2L^2\eta_i^2D$.
\end{lem}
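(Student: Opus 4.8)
The plan is to replay the proof of Lemma~\ref{lemma:hogwild_4_new1} essentially verbatim, changing only the handful of places where the $t$-dependence of $\tau$ (and hence of $E(t)=2\tau(t)$) actually intervenes.

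First I would check that the recurrence~\eqref{eq:rec} is still available: it requires only $0<\eta_t\le\frac{1}{4LD}$, and this follows from $\eta_t=\frac{\alpha_t}{\mu(t+E(t))}\le\frac{\alpha}{\mu(t+E(t))}$ together with $E(t)=2\tau(t)\ge\frac{4L\alpha D}{\mu}$, i.e.\ the hypothesis $\tau(t)\ge\frac{2L\alpha D}{\mu}$. Then, setting $a_t=(L+\mu)\eta_t+2L^2\eta_t^2D$, taking full expectations in Lemma~\ref{lemma:hogwild_1_new1}, inserting Lemma~\ref{lemma:hogwild_21}, and bounding the step sizes appearing in the delayed sum by a constant multiple of $\frac{\alpha}{\mu(t+E(t))}$ (using $\tau(t)\le t$ so that $t-\tau(t)+E(t)$ is a fixed fraction of $t+E(t)$), I get a one-step inequality $\mathbb{E}[\|w_{t+1}-w_*\|^2]\le\beta_t\,\mathbb{E}[\|w_t-w_*\|^2]+\gamma_t$ with $\beta_t=1-\frac{\mu\eta_t}{2}$ and $\gamma_t$ of the same shape as in Lemma~\ref{lemma:hogwild_4_new1}, now reading every $\tau$ as $\tau(t)$ and every $E$ as $E(t)$. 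This is just a repetition of the earlier bookkeeping.

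The one genuinely new step — and the one I expect to be the crux — is estimating the telescoping product $\prod_{j=i}^{t}\beta_j$ produced by Lemma~\ref{lemma:hogwild_recursive_form}. In Lemma~\ref{lemma:hogwild_4_new1} this used a \emph{constant} $E\ge2$, giving factors $\frac{j+E-2}{j+E}$; here $E(j)=2\tau(j)$ grows with $j$, so that estimate degrades. The remedy is the strengthened lower bound $\alpha_t\ge12$: since $\tau(j)\le j$ forces $j+E(j)\le 3j$, we obtain $\frac{\mu\eta_j}{2}=\frac{\alpha_j}{2(j+E(j))}\ge\frac{12}{6j}=\frac2j$, hence $\beta_j\le 1-\frac2j\le 1-\frac{2}{j+1}=\frac{j-1}{j+1}$. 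This is exactly the form used before with the constant ``$E$'' replaced by $1$, so $\prod_{j=i}^{t}\beta_j\le\frac{(i-1)i}{t(t+1)}\le\frac{(i+1)^2}{t^2}$ and, in particular, $\prod_{j=0}^{t}\beta_j\le\frac{4}{t^2}$. Plugging these into the unrolled recurrence replaces~\eqref{eq_newlemma02_041} by $\mathbb{E}[\|w_{t+1}-w_*\|^2]\le\sum_{i=1}^{t}\frac{(i+1)^2}{t^2}\gamma_i+\frac{4}{t^2}\mathbb{E}[\|w_0-w_*\|^2]$.

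Finally I would substitute $\eta_i\le\frac{\alpha}{\mu(i+E(i))}$ into each $\gamma_i$ and use $\frac{i+1}{i+E(i)}\le1$ (valid because $E(i)=2\tau(i)\ge1$) to cancel $(i+E(i))^{-2}$ against $(i+1)^2$; the $i$-th term then collapses to $\frac{\alpha^2D}{\mu^2t^2}\bigl[4a_i(1+\sqrt{\Delta}\tau(i))\bigl(N\tau(i)+2L^2\sum_{j=i-\tau(i)}^{i-1}\mathbb{E}[\|\hat{w}_j-w_*\|^2]\bigr)+2N\bigr]$, which is the claimed bound. The only point demanding a bit of care is the behaviour at small indices — the hypotheses already force $t$ to exceed $\frac{2L\alpha D}{\mu}$ for $\tau(t)\le t$ to be consistent — but this affects only the low-order range and not the structure of the argument; everything else is the bookkeeping already carried out for Lemma~\ref{lemma:hogwild_4_new1}.
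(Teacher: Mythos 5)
Your proposal is correct and follows essentially the same route as the paper: the only genuinely new ingredient in both arguments is the product estimate $1-\frac{\mu\eta_j}{2}\le 1-\frac{2}{j}\le 1-\frac{2}{j+1}$ obtained from $\alpha_j\ge 12$ and $E(j)=2\tau(j)\le 2j$, after which the telescoping and the substitution $\frac{i+1}{i+E(i)}\le 1$ proceed exactly as in Lemma~\ref{lemma:hogwild_4_new1} with the constant $E$ replaced by $1$, yielding the $\frac{(i+1)^2}{t^2}$ weights and the $\frac{4}{t^2}$ coefficient on $\mathbb{E}[\|w_0-w_*\|^2]$. The one place where you are slightly loose --- bounding the delayed step size $\eta_{i-\tau(i)}$, whose denominator actually involves $E(i-\tau(i))$ rather than $E(i)$ --- is glossed over in exactly the same way in the paper's own derivation, so it is a shared imprecision rather than a new gap.
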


Now we can continue the same analysis that led to Theorem \ref{theorem:Hogwild_newnew1} and conclude that there exists a constant $Z$ such that, see (\ref{eqforGen}),
\begin{eqnarray}
\mathbb{E}[\|\hat{w}_{t+1} - w_* \|^2]
& \leq& 8 (1+\sqrt{\Delta}\tau(t))\tau(t) D \frac{\alpha^2}{\mu^2}\frac{1}{t^2}  (2L^2 Z +N) + \nonumber \\
&& 2\frac{\alpha^2D }{\mu^2} \frac{1}{t^2} \left(\sum_{i=1}^t
 \left[ 4 a_i  (1+\sqrt{\Delta}\tau(i))  [ N\tau(i) + 2L^2 \tau(i) Z] +
 2 N \right]
\right)  + \nonumber \\
&& \frac{4}{t^2} \mathbb{E}[\| w_{0} - w_{*} \|^2] . \label{eqderGen1}
\end{eqnarray}

Let us assume
\begin{equation}
\tau(t)\leq \sqrt{t \cdot L(t)}, \label{eqR1}
\end{equation}
where 
$$L(t)= \frac{1}{\ln t} - \frac{1}{(\ln t)^2}$$
which has the property that the derivative of $t/(\ln t)$ is equal to $L(t)$.
Now we observe
\begin{eqnarray*}
 \sum_{i=1}^t a_i \tau(i)^2 &=& \sum_{i=1}^t [(L+\mu)\eta_i + 2L^2\eta_i^2D]\tau(i)^2
\leq  \sum_{i=1}^t [(L+\mu)\frac{\alpha}{\mu i} + 2L^2\frac{\alpha^2}{\mu^2 i^2}D]\cdot i L(i) \\
&=& \frac{(L+\mu)\alpha}{\mu}  \sum_{i=1}^t  L(i) + O(\ln t)
= \frac{(L+\mu)\alpha}{\mu}  \frac{t}{\ln t} + O(\ln t) 
\end{eqnarray*}
and
\begin{eqnarray*}
 \sum_{i=1}^t a_i \tau(i) &=& \sum_{i=1}^t [(L+\mu)\eta_i + 2L^2\eta_i^2D]\tau(i)
\leq  \sum_{i=1}^t [(L+\mu)\frac{\alpha}{\mu i} + 2L^2\frac{\alpha^2}{\mu^2 i^2}D]\cdot \sqrt{i} \\
&=& O(\sum_{i=1}^t  \frac{1}{\sqrt{i}})
=  O(\sqrt{t}) .
\end{eqnarray*}

Substituting both inequalities in  (\ref{eqderGen1}) gives
$\mathbb{E}[\|\hat{w}_{t+1} - w_* \|^2]$
\begin{eqnarray}
& \leq& 8  (1+\sqrt{\Delta}\tau(t))\tau(t) D \frac{\alpha^2}{\mu^2}\frac{1}{t^2}  (2L^2 Z +N) + \nonumber \\
&& 2\frac{\alpha^2D }{\mu^2} \frac{1}{t^2} \left(2Nt + 4\sqrt{\Delta}[  \frac{(L+\mu)\alpha}{\mu} \frac{t}{\ln t} + O(\ln t) ]  [ N + 2L^2 Z] +O(\sqrt{t})
\right)  + \nonumber \\
&& \frac{4}{t^2} \mathbb{E}[\| w_{0} - w_{*} \|^2]  \nonumber \\
&\leq &
 2\frac{\alpha^2D }{\mu^2} \frac{1}{t^2} \left(2Nt + 4\sqrt{\Delta}[ (1+ \frac{(L+\mu)\alpha}{\mu}) \frac{t}{\ln t} + O(\ln t) ]  [ N + 2L^2 Z] +O(\sqrt{t})
\right)  \nonumber \\
&& +\frac{4}{t^2} \mathbb{E}[\| w_{0} - w_{*} \|^2]   
    \label{ineqterms}
\end{eqnarray}
Again we define $Z_{t+1}$ as the right hand side of this inequality.  Notice that $Z_t = O(1/t)$, since the above derivation
 proves
$$ \mathbb{E}[\|\hat{w}_{t+1} - w_* \|^2] \leq  \frac{4\alpha^2D N}{\mu^2} \frac{1}{t} + O(\frac{1}{t\ln t}).$$

Summarizing we have the following main lemma:

\begin{lem} \label{lemtau}
Let Assumptions \ref{ass_stronglyconvex}, \ref{ass_smooth}, \ref{ass_convex} and \ref{ass_tau} hold and  consider Algorithm~\ref{HogWildAlgorithm}. 
Assume  $\frac{2 L \alpha D}{\mu}\leq \tau(t)\leq \sqrt{t \cdot L(t)}$ with $\tau(t)$ monotonic increasing.
Let  $\eta_t = \frac{\alpha_t}{\mu(t+2\tau(t))}$ with $12\leq \alpha_t \leq\alpha$. Then,
 the expected convergence rate of read vectors is 
 \begin{eqnarray*}
\mathbb{E}[\|\hat{w}_{t+1} - w_* \|^2]
& \leq& 
 \frac{4\alpha^2D N}{\mu^2} \frac{1}{t} + O(\frac{1}{t\ln t}),
\end{eqnarray*}
 where $L(t)=\frac{1}{\ln t} - \frac{1}{(\ln t)^2}$. The expected convergence rate $\mathbb{E}[\|w_{t+1} - w_* \|^2]$ satisfies the same bound.
 \end{lem}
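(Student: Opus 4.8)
The plan is to rerun the induction that established Theorem~\ref{theorem:Hogwild_newnew1}, but with the delay and the offset promoted to functions of $t$. First I would check that the basic recurrence \eqref{eq:rec} still applies: since $\eta_t=\frac{\alpha_t}{\mu(t+2\tau(t))}\le \frac{\alpha}{2\mu\tau(t)}\le \frac{1}{4LD}$, where the last step uses the hypothesis $\tau(t)\ge \frac{2L\alpha D}{\mu}$, Lemmas~\ref{lem:expect}, \ref{lemma:hogwild_21} and \ref{lemma:hogwild_1_new1} go through verbatim. Next I would redo the telescoping product estimate inside the proof of Lemma~\ref{lemma:hogwild_4_new1}: because $\alpha_j\ge 12$ and $E(j)=2\tau(j)\le 2j$ we get $1-\tfrac{\mu\eta_j}{2}=1-\tfrac{\alpha_j}{2(j+E(j))}\le 1-\tfrac{2}{j+1}$, so that $\prod_{j=i}^t(1-\tfrac{\mu\eta_j}{2})\le \tfrac{(i+1)^2}{t^2}$ (the original computation with the constant $E$ replaced by $1$). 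Plugging this into Lemma~\ref{lemma:hogwild_recursive_form} yields the function-of-$t$ analogue, Lemma~\ref{lemma:hogwild_4_new1_Gen}.

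The heart of the argument is then the same bootstrap as for Theorem~\ref{theorem:Hogwild_newnew1}. I would postulate a decreasing sequence $Z_t$ with $\mathbb{E}[\|\hat w_t-w_*\|^2]\le Z_t$, use the corollary \eqref{eq:eq11111} together with Lemma~\ref{lemma:hogwild_21} to bound $\mathbb{E}[\|\hat w_{t+1}-w_*\|^2]$ in terms of the $Z_j$ for $j\le t$, and close the recursion. The genuinely new ingredient is the pair of sum estimates under $\tau(t)\le \sqrt{t\cdot L(t)}$ with $L(t)=\frac{1}{\ln t}-\frac{1}{(\ln t)^2}$: writing $a_i=(L+\mu)\eta_i+2L^2\eta_i^2D$, one has $\sum_{i=1}^t a_i\tau(i)^2\le \frac{(L+\mu)\alpha}{\mu}\sum_{i=1}^t L(i)+O(\ln t)=\frac{(L+\mu)\alpha}{\mu}\frac{t}{\ln t}+O(\ln t)$, using that $L(t)$ is exactly the derivative of $t/\ln t$, and $\sum_{i=1}^t a_i\tau(i)=O(\sum_i i^{-1/2})=O(\sqrt t)$. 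Substituting these (first with a crude constant $Z\ge Z_0\ge Z_1\ge\cdots$ to start the bootstrap) into the recurrence gives inequality \eqref{ineqterms}, whose right-hand side I would take as the definition of $Z_{t+1}$; it is decreasing, equals $\frac{4\alpha^2DN}{\mu^2}\frac{1}{t}+O(\frac{1}{t\ln t})$, and therefore admits the uniform constant $Z$ that retroactively legitimizes the induction. This establishes the claimed bound for $\mathbb{E}[\|\hat w_{t+1}-w_*\|^2]$, and feeding it back into Lemma~\ref{lemma:hogwild_4_new1_Gen} gives the identical bound for $\mathbb{E}[\|w_{t+1}-w_*\|^2]$.

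The main obstacle I anticipate is the self-referential structure: the bound $Z_t$ on $\mathbb{E}[\|\hat w_j-w_*\|^2]$ reappears inside the very expression defining $Z_{t+1}$, so one must verify both that the candidate $Z_{t+1}$ is truly monotone decreasing in $t$ --- the $Z$-dependent contributions scale like $\frac{\tau(t)^2}{t^2}Z$ and $\frac{1}{t^2}\big(\sum_i a_i\tau(i)^2\big)Z$, both decaying strictly faster than the leading $\Theta(1/t)$ term --- and that a single constant $Z$ dominates the whole sequence. A secondary nuisance is the estimate $\sum_{i=1}^t L(i)=\frac{t}{\ln t}+O(\cdot)$, which needs care for small $i$ where $\ln i$ is ill-behaved; those finitely many terms contribute only an additive $O(1)$ that is absorbed into the $O(\frac{1}{t\ln t})$ error, but this should be pointed out explicitly.
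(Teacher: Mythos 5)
Your proposal is correct and follows essentially the same route as the paper's own argument in Appendix~\ref{subsec_sens_tau}: the same verification that $\eta_t\le\frac{1}{4LD}$ via $\tau(t)\ge\frac{2L\alpha D}{\mu}$, the same product bound $1-\frac{\mu\eta_j}{2}\le 1-\frac{2}{j+1}$ leading to Lemma~\ref{lemma:hogwild_4_new1_Gen}, the same two sum estimates exploiting that $L(t)$ is the derivative of $t/\ln t$, and the same bootstrap defining $Z_{t+1}$ as the right-hand side of (\ref{ineqterms}). Your two flagged obstacles (monotonicity of the self-referential $Z_{t+1}$ and the small-$i$ terms in $\sum_i L(i)$) are real but minor, and are handled implicitly in the paper exactly as you describe.
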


Notice that we can plug $Z_t = O(1/t)$ back into an equivalent of (\ref{bestbound}) where we may bound $Z_{i-\tau(i)}=O(1/(i-\tau(i))$ which replaces $Z$ in the second line of (\ref{eqforGen}). On careful examination this leads to a new upper bound (\ref{ineqterms}) where the $2L^2Z$ terms gets absorped in a higher order term.
This can be used to show that, for
$$ t\geq T_0 = \exp[ 2\sqrt{\Delta}(1+\frac{(L+\mu)\alpha}{\mu})],$$
the higher order terms that contain $\tau(t)$ (as defined above) are at most the leading term as given in Lemma \ref{lemtau}.


 Upper bound (\ref{ineqterms}) also  shows that, for  
$$ t \geq T_1 = \frac{\mu^2}{\alpha^2 N D}\|w_0-w_*\|^2,$$
the higher order term that contains $\|w_0-w_*\|^2$ is at most the leading term.

\subsection{Convergence of Hogwild! with probability 1}
\label{subsec:convergence_Hogwild_wp1}

\begin{lem}
Let us consider the sequence $w_0,w_1,w_2,\dotsc,w_t,\dotsc,w_n$ generated by \eqref{eqw}:
$$
 w_{t+1} = w_t - \eta_t d_{\xi_t}  S^{\xi_t}_{u_t} \nabla f(\hat{w}_t;\xi_t), 
$$
and define 
$$
m_t= \max_{0 \leq i \leq n, 0 \leq t' \leq t} \| \nabla f(w_{t'};\xi_i) \|.
$$
Then, 
\begin{align*}
 m_t\leq m_0 \exp(L D \sum_{i=0}^{t-1} \eta_i), \tagthis \label{eq:mt_formula1}
\end{align*} 
where $d_{\xi_t} \leq D$ for all $\xi_t$. 
\end{lem}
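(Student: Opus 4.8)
The plan is to prove the bound by induction on $t$, tracking how much the quantity $m_t$ can grow in one step of the recursion. The key observation is that the recursion (\ref{eqw}) only differs from a plain gradient step by the factor $d_{\xi_t} S^{\xi_t}_{u_t}$, and since $S^{\xi_t}_{u_t}$ is a diagonal $0/1$-matrix we have $\| d_{\xi_t} S^{\xi_t}_{u_t} v \| \leq d_{\xi_t} \|v\| \leq D\|v\|$ for any vector $v$; in particular $\|w_{t+1} - w_t\| \leq \eta_t D \|\nabla f(\hat{w}_t;\xi_t)\|$. The delicate point is that the gradient is evaluated at the read vector $\hat{w}_t$, not at $w_t$, so I first need to control $\|\hat{w}_t - w_{t'}\|$ for the relevant indices $t'$ in terms of the step sizes and $m_t$.

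First I would fix an arbitrary sample index $i \in \{0,\dots,n\}$ and an arbitrary $t' \le t$, and estimate $\|\nabla f(w_{t'};\xi_i)\|$. Using $L$-smoothness of $f(\cdot;\xi_i)$ (Assumption~\ref{ass_smooth}) together with $\|\nabla f(w_{t'};\xi_i)\| \le \|\nabla f(w_0;\xi_i)\| + L\|w_{t'} - w_0\|$, I would then bound $\|w_{t'} - w_0\| \le \sum_{s=0}^{t'-1} \|w_{s+1} - w_s\| \le \sum_{s=0}^{t'-1} \eta_s D \|\nabla f(\hat{w}_s;\xi_s)\|$. Because $\hat{w}_s$ is itself an aggregate of partial updates among $w_j$, $j \le s$ (see (\ref{eq:what_wrhot_1})), each entry of $\hat{w}_s$ lies "between" earlier iterates; in particular each $\|\nabla f(\hat{w}_s;\xi_s)\| \le m_t$ by the very definition of $m_t$ if we first argue that $m_t$ also dominates gradients at the read vectors — or, more cleanly, I would enlarge the definition's supremum set mentally and simply use that every $\hat{w}_s$ with $s \le t-1$ is built from coordinates of $w_j$ with $j\le s \le t-1 \le t$, so $\|\nabla f(\hat{w}_s;\xi_i)\|$ is controlled by an $L$-Lipschitz perturbation around iterates already counted in $m_t$. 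This feeds a self-referential inequality of the form $m_t \le m_0 + L D \sum_{s=0}^{t-1}\eta_s m_s$ (after taking the max over $i$ and $t'$), possibly with a harmless constant.

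From the discrete inequality $m_t \le m_0 + LD\sum_{s=0}^{t-1}\eta_s m_s$ the bound (\ref{eq:mt_formula1}) follows by the discrete Grönwall lemma: one shows by induction that $m_t \le m_0 \prod_{s=0}^{t-1}(1 + LD\eta_s) \le m_0 \exp(LD\sum_{s=0}^{t-1}\eta_s)$, using $1+x \le e^x$. Since $m_t$ is clearly nondecreasing in $t$, the induction step only needs the bound on $m_{t-1}$ and the one-step growth estimate.

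The main obstacle is the bookkeeping around $\hat{w}_s$: because reads and writes are inconsistent, $\hat{w}_s$ is not literally equal to any $w_j$, and one must argue carefully that each of its coordinates equals the corresponding coordinate of some $w_j$ with $j \le s$, hence that $\hat{w}_s$ lies in (a bounded neighborhood of) the convex region spanned by past iterates, so that $\|\nabla f(\hat{w}_s;\xi_i)\|$ is genuinely controlled by the iterate gradients already appearing in $m_t$ (up to the $L$-Lipschitz slack, which then gets absorbed into the Grönwall constant). Everything else — the $\|d_\xi S^\xi_u v\| \le D\|v\|$ estimate, summing the increments, and the Grönwall step — is routine.
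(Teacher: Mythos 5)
Your proposal is correct and follows essentially the same route as the paper: bound the one-step increment $\|\nabla f(w_{t+1};\xi_i)-\nabla f(w_t;\xi_i)\|\le L\|w_{t+1}-w_t\|\le LD\eta_t\|\nabla f(\hat w_t;\xi_t)\|$ via $L$-smoothness and $\|d_\xi S^\xi_u v\|\le D\|v\|$, obtain the multiplicative recursion $m_{t+1}\le(1+LD\eta_t)m_t$ (your cumulative-sum-plus-Gr\"onwall formulation is equivalent), and finish with $1+x\le e^x$. The one delicate point you flag --- justifying $\|\nabla f(\hat w_t;\xi_t)\|\le m_t$ when $\hat w_t$ is an inconsistent read rather than an actual iterate --- is real, but the paper simply asserts this inequality without the coordinate-wise argument you sketch, so your treatment is, if anything, more careful on exactly the step where the paper is informal.
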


\begin{proof}
From the $L$-smoothness assumption (i.e., $\| \nabla f(w;\xi) - \nabla f(w';\xi) \| \leq L \| w - w' \|, \ \forall w,w' \in \mathbb{R}^d$), for any given $\xi_i$:

\begin{align*}
\| \nabla f(w_{t+1};\xi_i) - \nabla f(w_t;\xi_i) \| &\leq L \| w_{t+1} - w_t \| = L \| \eta_t d_{\xi_t}  S^{\xi_t}_{u_t} \nabla f(\hat{w}_t;\xi_t) \| \\
&\leq LD\eta_t\| \nabla f(\hat{w}_t;\xi_t) \|. 
\end{align*}

Since 
$$
m_t= \max_{0 \leq i \leq n, 0 \leq t' \leq t} \| \nabla f(w_{t'};\xi_i) \|,
$$
we have 
$$
\| \nabla f(w_{t+1};\xi_i) - \nabla f(w_t;\xi_i) \| \leq L D \eta_t m_t 
$$
for any $i \in [n]$ and $t$.
Using the triangular inequality, we obtain
\begin{align*}
 \| \nabla f(w_{t+1};\xi_i) \| &\leq  \| \nabla f(w_{t};\xi_i) \| +  \| \nabla f(w_{t+1};\xi_i) - \nabla f(w_{t};\xi_i)  \|  \\
  &\overset{\forall i, \| \nabla f(w_t;\xi_i) \| \leq m_t}{\leq} m_t + LD \eta_t \| \nabla f(\hat{w}_t;\xi_t) \| \\    
   &\overset{\forall i, \| \nabla f_i(\hat{w}_t;\xi_i) \| \leq m_t}{\leq} (1+LD\eta_t) m_t. 
\end{align*}

Moreover, the result above implies $m_{t+1} \leq (1+LD \eta_t) m_t$ and unrolling $m_{t}$ yields
\begin{align*}
m_{t+1} \leq m_0  \prod_{i=0}^t (1+LD \eta_i).
\end{align*}
For all $x\geq 0$, it is always true that $1+x \leq \exp(x)$. Hence, we have
\begin{align*}
 m_{t+1}\leq m_0 \prod_{i=0}^t (1+LD \eta_i) \leq m_0  \prod_{i=0}^t \exp(LD \eta_i) \leq m_0 \exp(LD [\sum_{i=0}^t \eta_i]).
\end{align*}
\end{proof}

\noindent
\textbf{Theorem~\ref{Hogwild:theorem_convergence} (Sufficient conditions for almost sure convergence for Hogwild!)}
\textit{Let Assumptions \ref{ass_stronglyconvex}, \ref{ass_smooth}, \ref{ass_convex} and \ref{ass_tau} hold. Consider Hogwild! method described in Algorithm~\ref{HogWildAlgorithm} with a stepsize sequence such that}
\begin{align*}
0 < \eta_t=\frac{1}{LD(2+\beta)(k+t)} < \frac{1}{4LD} , \beta>0, k \geq 3\tau. 
\end{align*}
\textit{Then, the following holds w.p.1 (almost surely)}
\begin{align*}
\|w_{t} - w_{*} \| \rightarrow 0. 
\end{align*} 

\begin{proof}
As shown in Lemma~\ref{lemma:hogwild_1_new1}, for $0 < \eta_t \leq \frac{1}{4LD}$, we have
\begin{align*}
 \mathbb{E}[\| w_{t+1} - w_{*} \|^2 |\mathcal{F}_t] &\leq \left(1 - \frac{\mu \eta_t}{2} \right) \| w_{t} - w_{*} \|^2 + [(L+\mu)\eta_t + 2L^2 \eta_t^2D] \| \hat{w}_t - w_{t}  \|^2 + 2 \eta^2_t D N\\
 &= \| w_{t} - w_{*} \|^2  - \frac{\mu \eta_t}{2} \| w_{t} - w_{*} \|^2 + [(L+\mu)\eta_t + 2L^2 \eta_t^2D] \| \hat{w}_t - w_{t}  \|^2\\& + 2 \eta^2_t D N. 
\end{align*}

If we can show that $\sum_{t=0}^\infty [(L+\mu)\eta_t + 2L^2 \eta_t^2D] \| \hat{w}_t - w_{t}  \|^2 $ is finite, then it is straight forward to apply the proof technique from Theorem~\ref{thm_general_02_new_02} to show that $\|w_t - w_* \|^2 \rightarrow 0$ w.p.1. From the proof of Lemma~\ref{lemma:hogwild_21}, we know $\|w_t- \hat{w}_t \|^2$ is at most 

\begin{align*}
(1+\sqrt{\Delta}\tau) \sum_{j=t-\tau}^{t-1} \eta_j^2  \|  d_{\xi_j}    S^{\xi_j}_{u_j}  \nabla  f(\hat{w}_j;\xi_j) \|^2&\leq (1+\sqrt{\Delta}\tau)D^2 \sum_{j=t-\tau}^{t-1} \eta_j^2  \|  \nabla  f(\hat{w}_j;\xi_j) \|^2\\&\leq (1+\sqrt{\Delta}\tau)D^2 \tau m^2_t \eta_{t-\tau}^2 . 
\end{align*}

Since $\eta_{t-\tau}=(1-\frac{\tau}{k+t-\tau})\eta_t < \frac{1}{2}\eta_t$ when $k \geq 3\tau$ for all $t\geq 0$, it yields $\|w_t- \hat{w}_t \|^2 < (1+\sqrt{\Delta}\tau)D^2 \tau \frac{1}{4} \eta^2_t m^2_t$. Hence $\sum_{t=0}^\infty [(L+\mu)\eta_t + 2L^2 \eta_t^2D] \| \hat{w}_t - w_{t}  \|^2 $ is at most
$$
    [(L+\mu)\eta_0 + 2L^2 \eta_0^2D] (1+\sqrt{\Delta}\tau)D^2 \tau \sum_{t=0}^\infty \eta^2_t m^2_t.
$$

Combining $m_t\leq m_0 \exp(LD \sum_{i=0}^t \eta_i)$ (see~\eqref{eq:mt_formula1}) and  
$\eta_i= \frac{1}{LD(2+\beta)(k+i)}$ yields
\begin{align*}
m_t &\leq m_0 \exp(\frac{1}{2+\beta}\sum_{i=1}^t\frac{1}{i}) \leq  m_0 \exp(\frac{1}{2+\beta}(1+\ln t)) \leq m_0 \exp(\frac{1}{2+\beta})t^{\frac{1}{2+\beta}}.
\end{align*}
The second inequality is a property of harmonic number $H_t =\sum_{i=1}^t \frac{1}{i} \leq 1 + \ln t$. 
Hence, $$\eta_t m_t \leq \frac{1}{L(2+\beta)t} m_0 \exp(\frac{1}{2+\beta})t^{\frac{1}{2+\beta}} \leq \frac{m_0 \exp(\frac{1}{2+\beta})}{L(2+\beta)} t^{\frac{-(1+\beta)}{2+\beta}}.$$
Hence, we obtain 
$$
(\eta_t m_t)^2 \leq   [\frac{m_0  \exp(\frac{1}{2+\beta})}{L(2+\beta)}]^2 t^{\frac{-(2+2\beta)}{2+\beta}} \leq  [\frac{m_0 \exp(\frac{1}{2+\beta})}{L(2+\beta)}]^2 t^{-(1+\rho)}, 
$$
where $\rho = \frac{\beta}{2+\beta}$. 
Due to the property of over-harmonic series, $\sum_{t=1}^\infty \frac{1}{t^{1+\rho}}$ converges for any $\rho > 0$. In other words, $\sum_{t=0}^\infty (\eta_t m_t)^2$ is finite or $\sum_{t=0}^\infty [(L+\mu)\eta_t + 2L^2 \eta_t^2D] \| \hat{w}_t - w_{t}  \|^2 $ is finite.
\end{proof} 
\subsection{Convergence of Large Stepsizes}
\label{subsec:convergence_large_stepsize}

\textbf{Theorem~\ref{theore:SGD_convergence}}
\textit{Let Assumptions \ref{ass_stronglyconvex}, \ref{ass_smooth}, and \ref{ass_convex} hold. Consider Algorithm \ref{sgd_algorithm} with a stepsize sequence such that
$\eta_t \leq \frac{1}{2L}$, $\eta_t \rightarrow 0$, $\frac{d}{dt}\eta_t \leq 0$
and $\sum_{t=0}^\infty \eta_t \rightarrow \infty$.
} 
\textit{Then,} $$\mathbb{E}[\| w_{t+1} - w_{*} \|^2 ] \rightarrow 0.$$

\begin{proof}
As shown in~\eqref{main_ineq_sgd_new02}
$$\mathbb{E}[\| w_{t+1} - w_{*} \|^2 ]  \leq (1-\mu \eta_t) \mathbb{E}[ \| w_{t} - w_{*} \|^2 ] + \eta_t^2 N,$$
when $\eta_t \leq \frac{1}{2L}$.

Let $Y_{t+1}=\mathbb{E}[\| w_{t+1} - w_{*} \|^2 ]$, $Y_{t}=\mathbb{E}[\| w_{t} - w_{*} \|^2 ]$, $\beta_t= 1 - \mu \eta_t$ and $\gamma_t=\eta^2_tN$. As proved in Lemma~\ref{lemma:hogwild_recursive_form}, if $Y_{t+1} \leq \beta_t Y_t + \gamma_t$, then 
\begin{align*}
Y_{t+1} &\leq \sum_{i=0}^t [\prod_{j=i+1}^t \beta_j] \gamma_i + (\prod_{i=0}^t\beta_i)Y_1\\
&=  \sum_{i=0}^t [\prod_{j=i+1}^t (1 - \mu \eta_j)] \gamma_i + [\prod_{i=0}^t(1 - \mu \eta_i)]\mathbb{E}[\| w_{1} - w_{*} \|^2 ]\\
\end{align*}

Let us define 
\begin{equation}
\label{eq:nj_etaj}
n(j)=\mu \eta_j. 
\end{equation}
Since $1-x \leq \exp(-x)$ for all $x\geq 0$, $$\prod_{j=i+1}^t (1 - \mu \eta_j)\leq \exp(-\sum_{j=i+1}^t(\mu \eta_j))= \exp(-\sum_{j=i+1}^t n(j)).$$ 
Furthermore, since $n(j)$ is decreasing in $j$, we have
$$\sum_{j=i+1}^t n(j) \geq \int_{x=i+1}^{t+1}n(x)dx.$$
These two inequalities can be used to derive
\begin{align*}
Y_{t+1} &\leq \sum_{i=0}^t \exp(-\sum_{j=i+1}^t n(j)) n(i)^2 N + \exp(-\sum_{j=0}^t n(j)) Y_0 \\
&\leq \sum_{i=0}^t \exp(-\int_{x=i+1}^{t+1}n(x)dx) n(i)^2 N + \exp(-\int_{x=0}^{t+1}n(x)dx) Y_0 \\
&=\sum_{i=0}^t \exp(-[M(t+1)-M(i+1)]) n(i)^2 N + \exp(-M(t+1)) Y_0,
\end{align*}
where $$M(y)=\int_{x=0}^y n(x)dx \text{ and } \frac{d}{dy}M(y) = n(y).$$ 

We focus on $$F=\sum_{i=0}^t \exp(-[M(t+1)-M(i+1)]) n(i)^2.$$
We notice that
$$F=\exp(-M(t+1)) \sum_{i=0}^t \exp(M(i+1)) n(i)^2.$$
We know that $\exp(M(x+1))$ increases and $n(x)^2$ decreases, hence, in the most general case either their product first decreases and then starts to increase or their product keeps on increasing. We first discuss the decreasing and increasing case.
  Let $a(x)=\exp(M(x+1))n(x)^2$ denote this product and let integer $j\geq 0$ be such that $a(0)\geq a(1) \geq \ldots \geq a(j)$ and $a(j)\leq a(j+1)\leq a(j+2)\leq \ldots$ (notice that $j=0$ expresses the situation where $a(i)$ only increases). Function $a(x)$ for $x\geq 0$ is minimized for some value $h$ in $[j,j+1)$. For $1\leq i\leq j$, $a(i)\leq \int_{x=i-1}^{i} a(x) dx$, and for $j+1\leq i$, $a(i)\leq \int_{x=i}^{i+1} a(x) dx$.
 This yields the upper bound 
 \begin{align*}
 \sum_{i=0}^t a(i) &= a(0) + \sum_{i=1}^{j} a(i) + \sum_{i=j+1}^t a(i) \\
 &\leq a(0) + \int_{x=0}^{j} a(x) dx + \int_{x=j+1}^{t+1} a(x) dx, \\
 &\leq a(0) + \int_{x=0}^{t+1} a(x) dx.
 \end{align*}
 The same upper bound holds for the other case as well, i.e., if $a(i)$ is only decreasing.
 We conclude 
 $$F \leq \exp(-M(t+1)) [\exp(M(2))n(0)^2 + \int_{x=0}^{j+1} \exp(M(x+1))n(x)^2 dx ] .$$
 Combined with
  $$M(x+1)=\int_{y=0}^{x+1}n(y)dy\leq \int_{y=0}^{x}n(y)dy + n(x)=M(x) +n(x) $$
  we obtain
   \begin{align*}
   F &\leq \exp(-M(t+1)) [\exp(M(2))n(0)^2 + \int_{x=0}^{t+1} \exp(M(x))n(x)^2 \exp(n(x)) dx ]\\
   &\leq  \exp(-M(t+1)) [\exp(M(2))n(0)^2 + \exp(n(0)) \int_{x=0}^{t+1} \exp(M(x))n(x)^2  dx ].
   \end{align*}
   This gives
   \begin{align*}
Y_{t+1} &\leq  \exp(-M(t+1)) [\exp(M(1))n(0)^2 + \exp(n(0)) \int_{x=0}^{t+1} \exp(M(x))n(x)^2  dx ]  N \\
& + \exp(-M(t+1)) Y_0 \\
&=  N \exp(n(0)) C(t+1)  + \exp(-M(t+1)) [\exp(M(1))n(0)^2N  + Y_0], \tagthis \label{eq:Yt_Ct_nt}
\end{align*}
where
$$ C(t) = \exp(-M(t)) \int_{x=0}^{t} \exp(M(x))n(x)^2  dx .$$

 For $y\leq t$, we derive (notice that $n(x)$ is decreasing)
\begin{align*}
C(t) &= \exp(-M(t)) \int_{x=0}^{t} \exp(M(x))n(x)^2  dx \\
&= \exp(-M(t)) \int_{x=0}^{y} \exp(M(x))n(x)^2  dx + \exp(-M(t)) \int_{x=y}^{t} \exp(M(x))n(x)^2  dx \\
&\leq \exp(-M(t)) \int_{x=0}^{y} \exp(M(x))n(x)^2  dx + \exp(-M(t)) \int_{x=y}^{t} \exp(M(x))n(x)n(y)  dx \\
&= \exp(-M(t)) \int_{x=0}^{y} \exp(M(x))n(x)^2  dx + \exp(-M(t))n(y) \int_{x=y}^t \exp(M(x))n(x)dx \\
&= \exp(-M(t)) \int_{x=0}^{y} \exp(M(x))n(x)^2  dx + n(y)[1-\exp(-M(t))\exp(M(y))] \\
&\leq \exp(-M(t)) \int_{x=0}^{y} \exp(M(x))n(x)^2  dx + n(y).
\end{align*}

Let $\epsilon>0$. Since $n(y) \rightarrow 0$ as $y\rightarrow \infty$, there exists a $y$ such that $n(y)\leq \epsilon/2$.
Since $M(t)\rightarrow \infty$ as $t\rightarrow \infty$, $\exp(-M(t))\rightarrow 0$ as $t\rightarrow \infty$. Hence, there exists a $T$ such that for $t\geq T$,
$\exp(-M(t)) \int_{x=0}^{y} \exp(M(x))n(x)^2  dx \leq \epsilon/2$. This implies $C(t)\leq \epsilon$ for $t\geq T$.
This proves $C(t)\rightarrow 0$ as $t\rightarrow \infty$, and we conclude $Y_t\rightarrow 0$ as  $t\rightarrow \infty$.
\end{proof}

\noindent
\textbf{Theorem~\ref{theorem:convergence_rate}}
\textit{Let Assumptions \ref{ass_stronglyconvex}, \ref{ass_smooth}, and \ref{ass_convex} hold. Consider Algorithm \ref{sgd_algorithm} with a stepsize sequence such that $\eta_t \leq \frac{1}{2L}$, $\eta_t \rightarrow 0$, $\frac{d}{dt}\eta_t \leq 0$, and $\sum_{t=0}^\infty \eta_t \rightarrow \infty$. Then,} 
\begin{align*}
    \mathbb{E}[\| w_{t+1} - w_{*} \|^2 ] & \leq N\exp(n(0)) 2n(M^{-1}(\ln [ \frac{n(t+1)}{n(0)} ]+ M(t+1))) \\
        & \quad + \exp(-M(t+1)) [\exp(M(1))n(0)^2  N  + \mathbb{E}[\| w_{0} - w_{*} \|^2 ]],
\end{align*}
\textit{where} $n(t)=\mu \eta_t$ \textit{and} $M(t)=\int_{x=0}^t n(x)dx$. 

\vspace{.3cm}

\begin{proof}
We are ready to compute the convergence rate of $Y_t=\mathbb{E}[\| w_{t} - w_{*} \|^2 ]$ for a given $M(t)$. We have shown that $C(t) \leq \exp(-M(t)) \int_{x=0}^{y} \exp(M(x))n(x)^2  dx + n(y)$. We are interested in the following problem: finding the largest $y\leq t$ such as 
$$\exp(-M(t))[\int_{x=0}^y \exp(M(x))n(x)^2dx] \leq n(y).$$

The solution is equal to 
$$
y = \sup \{ y\leq t: \exp(-M(t))[\int_{x=0}^y \exp(M(x))n(x)^2dx] \leq n(y) \}.
$$

Since $M(x)$ always decreases, 

\begin{align*}
y &\geq \sup \{ y\leq t: \exp(-M(t))[\int_{x=0}^y \exp(M(x))n(x) n(0) dx] \leq n(y) \}\\
  &= \sup \{ y\leq t: \exp(-M(t))n(0)[\exp(M(y))-\exp(M(0))] \leq n(y) \}\\
  &= \sup \{ y\leq t: \exp(M(y)) \leq \exp(M(0)) + \frac{n(y)}{n(0)} \exp(M(t))\}\\
  &\geq \sup \{ y\leq t: \exp(M(y)) \leq \exp(M(0)) + \frac{n(t)}{n(0)} \exp(M(t))\}\\
 &\geq \sup \{ y\leq t: \exp(M(y)) \leq  \frac{n(t)}{n(0)} \exp(M(t))\}\\  
 &=\sup \{ y\leq t: M(y) \leq  \ln [ \frac{n(t)}{n(0)} ]+ M(t)\}\\
 &= M^{-1}(\ln [ \frac{n(t)}{n(0)} ]+ M(t)),
\end{align*}
where $M^{-1}(t)$ exists for $t\in (0,n(0)]$ (since $M(y)$ strictly increases and maps into $(0,n(0)]$ for $y\geq 0$).

Therefore, $$C(t) \leq 2n(M^{-1}(\ln [ \frac{n(t)}{n(0)} ]+ M(t)))$$
and 
$$
Y_{t+1} \leq N \exp(n(0)) 2n(M^{-1}(\ln [ \frac{n(t+1)}{n(0)} ]+ M(t+1)))  + \exp(-M(t+1)) [\exp(M(1))n(0)^2 N + Y_0].
$$
\end{proof}



\noindent
\textbf{Theorem~\ref{theorem:fastest_convergence}}
\textit{Among all stepsizes $\eta_{q,t}=1/(K+t)^q$ where $q>0$, $K$ is a constant such that $\eta_{q,t}\leq \frac{1}{2L}$, SGD algorithm enjoys the fastest convergence with stepsize $\eta_{1,t}=1/(2L+t)$.} 

 \vspace{.3cm}
 
\begin{proof}
In~\eqref{eq:Yt_Ct_nt} we have 
$$
\mathbb{E}[\| w_{t} - w_{*} \|^2 ] \leq A C(t) + B \exp(-M(t)),  
$$
where $A= N \exp(n(0))$ and $B=\exp(M(1))n(0)^2N  + \mathbb{E}[\| w_{0} - w_{*} \|^2 ]$. Let us denote $C_q(t)=C(t),n_q(t)=\mu/(K+t)^q$ where $\eta_{q,t}=1/(K+t)^q$. It is obvious that $n_{q}(t)>n_1(t)$ for all $t$ and $q<1$. It implies for any $n_q(t)$ with $q<1$,  $$\exp(-M(t))<\exp(-\int_{x=0}^t\mu 1/(K+x)dx)<\exp(-\int_{x=0}^t\mu 1/x dx) < 1/t.$$
Therefore, we always have $\exp(-M(t))<1/t=n_1(t)<n_q(t)<C_q(t)$. Now, we consider the following case. We find $n(t)$ such that $n(t) = C(t)/2$. We rewrite this as 
$$ 
C(t) = \exp(-M(t)) \int_{x=0}^t \exp(M(x)) n(x)^2 dx = 2 n(t).
$$
Taking derivatives of both sides, we have:
$$
-n^2 = n(n-2n) = n(n-C) = \frac{d}{dt}C = 2 \frac{d}{dt}n .
$$ 
This is solved for $1/(at): -1/(a^2 t^2) = -2/(at^2)$ Hence, $a=1/2$ and $n(t)=2/t$. It means, $C_q(t)>C_1(t)$ and thus, the stepsize $\eta_{1,t}=1/(K+t)$ enjoys the fastest convergence. 
\end{proof}

\subsection{Convergence of Large Stepsizes in Batch Mode}
\label{subsec:convergence_large_stepsize_batch_model}
We first derive a couple lemmas which will help us deriving our main bounds. In what follows let Assumptions \ref{ass_stronglyconvex}, \ref{ass_smooth} and \ref{ass_convex} hold for all lemmas.

\begin{lem}
\label{lemma:general_form111}
Let us define $f(w;(\xi_1,\dotsc,\xi_{k}))=\frac{1}{k}\sum_{i=1}^k f(w;\xi_i)$, then we have the following properties:
$$\mathbb{E}[f(w;(\xi_1,\dotsc,\xi_{k}))]= F(w),$$
$$\mathbb{E}[\| \nabla f(w_*;(\xi_1,\dotsc,\xi_{k}))\|^2] = \frac{\mathbb{E}[\| \nabla f(w_*;\xi)\|^2]}{k},$$
and
$$
\mathbb{E}[\| \nabla f(w;(\xi_1,\dotsc,\xi_{k}))\|^2] \leq 4L [F(w)-F(w_*)] + \frac{N}{k}
$$
\end{lem}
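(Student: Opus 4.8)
The plan is to establish the three claims in order, each following from the corresponding single-sample fact by linearity and independence. For the first identity, I would simply write $\mathbb{E}[f(w;(\xi_1,\dots,\xi_k))] = \frac{1}{k}\sum_{i=1}^k \mathbb{E}[f(w;\xi_i)] = \frac{1}{k}\sum_{i=1}^k F(w) = F(w)$, using that each $\xi_i$ has the same distribution as $\xi$ so that $\mathbb{E}[f(w;\xi_i)] = F(w)$.

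For the second identity, I would use that $\nabla f(w_*;(\xi_1,\dots,\xi_k)) = \frac{1}{k}\sum_{i=1}^k \nabla f(w_*;\xi_i)$ and that $\mathbb{E}[\nabla f(w_*;\xi_i)] = \nabla F(w_*) = 0$, so the summands are independent, mean-zero random vectors. Then
\begin{align*}
\mathbb{E}[\|\nabla f(w_*;(\xi_1,\dots,\xi_k))\|^2] &= \frac{1}{k^2}\sum_{i=1}^k \mathbb{E}[\|\nabla f(w_*;\xi_i)\|^2] + \frac{1}{k^2}\sum_{i\neq j}\langle \mathbb{E}[\nabla f(w_*;\xi_i)], \mathbb{E}[\nabla f(w_*;\xi_j)]\rangle \\
&= \frac{1}{k^2}\cdot k\cdot \mathbb{E}[\|\nabla f(w_*;\xi)\|^2] = \frac{\mathbb{E}[\|\nabla f(w_*;\xi)\|^2]}{k},
\end{align*}
where the cross terms vanish by independence and $\nabla F(w_*)=0$.

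For the third (the main claim, and the only one requiring real work), I would mimic the proof of Lemma \ref{lem_bounded_secondmoment_04}: since $f(w;(\xi_1,\dots,\xi_k))$ is itself an $L$-smooth convex function (a convex combination of the $f(\cdot;\xi_i)$, each $L$-smooth and convex), Lemma \ref{lem_bound_diff_grad} applies to the aggregated ``realization,'' provided we check its hypotheses. Concretely, treating $\xi' = (\xi_1,\dots,\xi_k)$ as the random variable, $f(w;\xi')$ satisfies Assumptions \ref{ass_smooth} and \ref{ass_convex}, so Lemma \ref{lem_bound_diff_grad} gives $\mathbb{E}[\|\nabla f(w;\xi') - \nabla f(w_*;\xi')\|^2] \leq 2L[F(w) - F(w_*)]$. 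Then, as in the proof of Lemma \ref{lem_bounded_secondmoment_04}, apply $\|a\|^2 \leq 2\|a-b\|^2 + 2\|b\|^2$ with $a = \nabla f(w;\xi')$ and $b = \nabla f(w_*;\xi')$, take expectations, and combine with the second identity just proved: $\mathbb{E}[\|\nabla f(w;\xi')\|^2] \leq 4L[F(w)-F(w_*)] + 2\mathbb{E}[\|\nabla f(w_*;\xi')\|^2] = 4L[F(w)-F(w_*)] + \frac{2\mathbb{E}[\|\nabla f(w_*;\xi)\|^2]}{k} = 4L[F(w)-F(w_*)] + \frac{N}{k}$, recalling $N = 2\mathbb{E}[\|\nabla f(w_*;\xi)\|^2]$.

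The only subtlety — and the main obstacle — is verifying that Lemma \ref{lem_bound_diff_grad} (the generalization of the SVRG bound) genuinely applies with the aggregated random variable $\xi'$ in place of $\xi$: one needs that $w_*$ is still the minimizer of $w\mapsto \mathbb{E}_{\xi'}[f(w;\xi')] = F(w)$ (true, since that expectation equals $F$ by the first identity) and that $f(\cdot;\xi')$ is convex and $L$-smooth for each fixed value of $\xi'$ (true, as a uniform average of functions each convex and $L$-smooth). Once these are observed, the rest is the routine $\|a\|^2 \leq 2\|a-b\|^2 + 2\|b\|^2$ manipulation already carried out in the proof of Lemma \ref{lem_bounded_secondmoment_04}.
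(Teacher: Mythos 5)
Your proposal is correct and follows essentially the same route as the paper: the same linearity argument for the first identity, the same independence-plus-$\nabla F(w_*)=0$ cancellation of the cross terms for the second, and the same reduction of the third to the intermediate bound $\mathbb{E}[\|\nabla f(w;\xi')-\nabla f(w_*;\xi')\|^2]\le 2L[F(w)-F(w_*)]$ followed by the $\|a\|^2\le 2\|a-b\|^2+2\|b\|^2$ manipulation from Lemma \ref{lem_bounded_secondmoment_04}. The only (immaterial) difference is in how that intermediate bound is obtained: the paper applies Jensen's inequality to the average and then uses the single-sample estimate \eqref{eq:001} termwise, whereas you verify that the batch realization $f(\cdot;\xi')$ itself satisfies the hypotheses of Lemma \ref{lem_bound_diff_grad} (convex, $L$-smooth, with $\mathbb{E}_{\xi'}[f(\cdot;\xi')]=F$ minimized at $w_*$) and apply that lemma directly --- both verifications are one line and lead to the identical conclusion.
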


\begin{proof}
The expectation of $f(w;(\xi_1,\dotsc,\xi_{k}))$ is equal to 
\begin{align*}
\mathbb{E}[f(w;(\xi_1,\dotsc,\xi_{k}))]&= \frac{1}{k}\sum_{i=1}^k \mathbb{E}[f(w;\xi_i)]= F(w). \tagthis \label{eq:111xx}
\end{align*}

Now we write $\mathbb{E}[\| \nabla f(w_*;(\xi_1,\dotsc,\xi_{k}))\|^2]$ as $\mathbb{E}[\sum_{j=1}^d(\frac{1}{k} \sum_{i=1}^k[\nabla f(w_*;\xi_i)]_j)^2]$. This is equal to 
\begin{align*}
 &\mathbb{E}[\sum_{j=1}^d\{ \frac{1}{k^2} \sum_{i=1}^k[\nabla f(w_*;\xi_i)]^2_j + \frac{1}{k} \sum_{i_0 \neq i_1}[\nabla f(w_*;\xi_{i_0})]_j [\nabla f(w_*;\xi_{i_1})]_j \}]\\
 =& \mathbb{E}[\sum_{j=1}^d \frac{1}{k^2} \sum_{i=1}^k[\nabla f(w_*;\xi_i)]^2_j] + \mathbb{E}[\sum_{j=1}^d \frac{1}{k} \sum_{i_0 \neq i_1}[\nabla f(w_*;\xi_{i_0})]_j [\nabla f(w_*;\xi_{i_1})]_j].
\end{align*}

The first term $\mathbb{E}[\sum_{j=1}^d \frac{1}{k^2} \sum_{i=1}^k[\nabla f(w_*;\xi_i)]^2_j]$ is equal to 
\begin{align*}
& \sum_{j=1}^d \frac{1}{k^2} \sum_{i=1}^k \mathbb{E}[[\nabla f(w_*;\xi_i)]^2_j]=
\frac{1}{k^2} \sum_{i=1}^k \mathbb{E}[\sum_{j=1}^d[\nabla f(w_*;\xi_i)]^2_j]=
\frac{1}{k^2} \sum_{i=1}^k \mathbb{E}[\|\nabla f(w_*;\xi_i) \|^2] \\
&= \frac{1}{k}E[\| \nabla f(w_*;\xi)\|^2].
\end{align*}

The second term $\mathbb{E}[\sum_{j=1}^d \frac{1}{k} \sum_{i_0 \neq i_1}[\nabla f(w_*;\xi_{i_0})]_j [\nabla f(w_*;\xi_{i_1})]_j]$ is equal to 
\begin{align*}
&\sum_{j=1}^d \frac{1}{k} \sum_{i_0 \neq i_1} \mathbb{E}[[\nabla f(w_*;\xi_{i_0})]_j] \cdot \mathbb{E}[[\nabla f(w_*;\xi_{i_1})]_j] \\ & \qquad = \sum_{j=1}^d \frac{1}{k} \sum_{i_0 \neq i_1}[ \mathbb{E}[\nabla f(w_*;\xi_{i_0})]]_j \cdot [ \mathbb{E}[\nabla f(w_*;\xi_{i_1})]]_j.
\end{align*}
Note that $\mathbb{E}[\nabla f(w_*;\xi_{i_0})]= \nabla \mathbb{E}[f(w_*;\xi_i)]=\nabla F(w_*)=0$. This means that the second term is equal to $0$ and we conclude
\begin{equation}
\label{eq:ffff11}
\mathbb{E}[\| \nabla f(w_*;(\xi_1,\dotsc,\xi_{k}))\|^2] = \frac{\mathbb{E}[\| \nabla f(w_*;\xi)\|^2]}{k}.
\end{equation}

We have the following fact:
\begin{align*}
  \| \nabla f(w;(\xi_1,\dotsc,\xi_{k})) - \nabla f(w_*;(\xi_1,\dotsc,\xi_{k}))\|^2 & 
  = \frac{1}{k^2} [\|\sum_{i=1}^k (\nabla f(w;\xi_i) - \nabla f(w_*;\xi_i))\|^2] \\  
  &\leq \frac{1}{k}  \sum_{i=1}^k \| \nabla f(w;\xi_i) - \nabla f(w_*;\xi_i)\|^2\\
&= \| \nabla f(w;\xi_i) - \nabla f(w_*;\xi_i)\|^2.
\end{align*} 
Since $\mathbb{E}[\|\nabla f(w;\xi) -\nabla f(w_*;\xi) \|^2] \leq 2L[F(w)-F(w_*)]$ (see \eqref{eq:001}), we obtain
$$
\mathbb{E}[\| \nabla f(w;(\xi_1,\dotsc,\xi_{k})) - \nabla f(w_*;(\xi_1,\dotsc,\xi_{k}))\|^2] \leq 2L[F(w)-F(w_*)]. 
$$ 

By using a similar argument as in Lemma~\ref{lem_bounded_secondmoment_04} we can derive 
\begin{align*}
\mathbb{E}[\| \nabla f(w;(\xi_1,\dotsc,\xi_{k}))\|^2] &\leq 4L [F(w)-F(w_*)] + 2\mathbb{E}[\| \nabla f(w_*;(\xi_1,\dotsc,\xi_{k}))\|^2] \\
&\overset{\eqref{eq:ffff11}}{\leq } 4L [F(w)-F(w_*)] + 2\frac{\mathbb{E}[\| \nabla f(w_*;\xi)\|^2]}{k}\\
&=  4L [F(w)-F(w_*)] + \frac{N}{k}, \tagthis \label{eq:ffffkkkk}
\end{align*}
where $N=2\mathbb{E}[\| \nabla f(w_*;\xi)\|^2]$.
\end{proof}

We define 
$$\mathcal{F}_t=\sigma(w_0,\xi'_0,u_0,\dotsc, \xi'_{t-1},u_{t-1}), $$
where
$$\xi'_i = (\xi_{i,1}, \ldots, \xi_{i,k_i}).$$
We consider the following general algorithm with the following gradient updating rule:
\begin{equation*}
\label{eq:general_updating}
w_{t+1} = w_t -\eta_t d_{\xi'_{t}} S^{\xi'_t}_{u_t} \nabla f(w_t;\xi'_t), 
\end{equation*} 
where $f(w_t;\xi'_t) = \frac{1}{k_t} \sum_{i=1}^{k_t} f(w_t;\xi_{t,i})$.
 
 \begin{lem} \label{lem:expect}  
 We have
 $$\mathbb{E}[\| d_{\xi'_t}S^{\xi'_t}_{u_t} \nabla  f(w_t;\xi'_t)  \|^2| \mathcal{F}_t, \xi'_t]\leq  D \|\nabla f(w_t;\xi'_t) \|^2$$
 and
 $$\mathbb{E}[d_{\xi'_t}S^{\xi'_t}_{u_t} \nabla  f(w_t;\xi'_t) | \mathcal{F}_t ] 
= \nabla F(w_t).$$
 \end{lem}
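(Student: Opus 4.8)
The statement is the exact batch-mode analogue of Lemma~\ref{lem:expect} from the Hogwild! analysis, with $\xi_t$ replaced by the minibatch variable $\xi'_t$ and with the read vector $\hat w_t$ replaced by the (consistent) iterate $w_t$. Accordingly, the plan is to mimic that earlier proof almost verbatim, the only two things to check being (i) that $w_t$ is $\mathcal{F}_t$-measurable and $\xi'_t$ is independent of $\mathcal{F}_t$ (so that the inner conditional expectation over $u_t$ and the outer one over $\xi'_t$ behave as expected), and (ii) that the structural identity $d_{\xi'} \mathbb{E}[S^{\xi'}_u \mid \xi'] = D_{\xi'}$ together with $d_{\xi'}\le D$ still applies, which it does since the sets $S^{\xi'}_u$ are chosen a priori to partition $D_{\xi'}$ into at most $D$ nonempty pieces exactly as in the single-sample case.

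For the first bound I would first condition on $\mathcal{F}_t$ and $\xi'_t$ and take expectation over $u_t$ only. Since each $S^{\xi'_t}_{u}$ is a diagonal $0/1$ matrix, $\|S^{\xi'_t}_{u}\nabla f(w_t;\xi'_t)\|^2 = \sum_{i\in S^{\xi'_t}_u}[\nabla f(w_t;\xi'_t)]_i^2$, so
$$\mathbb{E}[\|d_{\xi'_t}S^{\xi'_t}_{u_t}\nabla f(w_t;\xi'_t)\|^2\mid \mathcal{F}_t,\xi'_t]
= d_{\xi'_t}^2 \sum_u p_{\xi'_t}(u)\sum_{i\in S^{\xi'_t}_u}[\nabla f(w_t;\xi'_t)]_i^2
= d_{\xi'_t}\sum_{i\in D_{\xi'_t}}[\nabla f(w_t;\xi'_t)]_i^2,$$
where the last equality uses $d_{\xi'_t}\mathbb{E}[S^{\xi'_t}_{u_t}\mid\xi'_t]=D_{\xi'_t}$. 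This equals $d_{\xi'_t}\|\nabla f(w_t;\xi'_t)\|^2 \le D\|\nabla f(w_t;\xi'_t)\|^2$ because $D_{\xi'_t}$ is the identity on the support of $\nabla f(w_t;\xi'_t)$ and $d_{\xi'_t}$ (the number of blocks in the partition) is at most $D$.

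For the second bound I would again take expectation over $u_t$ first:
$$\mathbb{E}[d_{\xi'_t}S^{\xi'_t}_{u_t}\nabla f(w_t;\xi'_t)\mid\mathcal{F}_t,\xi'_t]
= d_{\xi'_t}\,\mathbb{E}[S^{\xi'_t}_{u_t}\mid\xi'_t]\,\nabla f(w_t;\xi'_t)
= D_{\xi'_t}\nabla f(w_t;\xi'_t) = \nabla f(w_t;\xi'_t),$$
and then apply the tower property over $\xi'_t$, using that $w_t$ is $\mathcal{F}_t$-measurable and $\xi'_t$ is drawn independently: $\mathbb{E}[\nabla f(w_t;\xi'_t)\mid\mathcal{F}_t] = \nabla F(w_t)$, where the last identity is $\mathbb{E}[\nabla f(w;\xi'_t)] = \nabla F(w)$, obtained by differentiating the first display of Lemma~\ref{lemma:general_form111}. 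There is essentially no obstacle here; the only point requiring a sentence of care is the bookkeeping that $d_{\xi'_t}\le D$ and that $D_{\xi'_t}$ restricted to the support acts as the identity — both of which are immediate from the partition construction of the filters $S^{\xi'}_u$.
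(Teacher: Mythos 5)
Your proposal is correct and follows essentially the same route as the paper's own proof: condition on $\mathcal{F}_t,\xi'_t$, average over $u_t$ using $d_{\xi'}\mathbb{E}[S^{\xi'}_u\mid\xi']=D_{\xi'}$ to reduce both claims to $d_{\xi'_t}\|\nabla f(w_t;\xi'_t)\|^2\le D\|\nabla f(w_t;\xi'_t)\|^2$ and $D_{\xi'_t}\nabla f=\nabla f$, then apply the tower property with $\mathbb{E}[\nabla f(w_t;\xi'_t)\mid\mathcal{F}_t]=\nabla F(w_t)$. No gaps.
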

 
 \begin{proof}
 For the first bound, if we take the expectation of $\| d_{\xi'_t}S^{\xi'_t}_{u_t} \nabla  f(w_t;\xi'_t)  \|^2$ with respect to $u_t$, then we have (for vectors $x$ we denote the value of its $i$-th position by $[x]_i$)
 \begin{align*}
\mathbb{E}[\|d_{\xi'_t}S^{\xi'_t}_{u_t} \nabla  f(w_t;\xi'_t)  \|^2| \mathcal{F}_t, \xi'_t] & = 
 d_{\xi'_t}^2 \sum_u p_{\xi'_t}(u)  \|S^{\xi'_t}_{u} \nabla  f(w_t;\xi'_t)  \|^2 \\ & = 
 d_{\xi'_t}^2 \sum_u p_{\xi'_t}(u)  \sum_{i\in S^{\xi'_t}_{u}} [\nabla f(w_t;\xi'_t)]_i^2 \\
&= 
 d_{\xi'_t} \sum_{i\in D_{\xi'_t}} [\nabla f(w_t;\xi'_t)]_i^2 =
 d_{\xi'_t} \| f(w_t;\xi'_t) \|^2 \leq  D \|\nabla f(w_t;\xi'_t) \|^2,
\end{align*}
where the transition to the second line follows from (\ref{eq:Sexp}).

For the second bound, if we take the expectation of $ d_{\xi'_t}S^{\xi'_t}_{u_t} \nabla  f(w_t;\xi'_t)$ wrt $u_t$, then we have:
\begin{align*}
\mathbb{E}[ d_{\xi'_t}S^{\xi'_t}_{u_t} \nabla  f(w_t;\xi'_t) | \mathcal{F}_t, \xi'_t] &=
 d_{\xi'_t} \sum_u p_{\xi'_t}(u) S^{\xi'_t}_{u}  \nabla f(w_t;\xi'_t) = \nabla f(w_t;\xi'_t),
\end{align*}
and this can be used to derive
\begin{align*}
\mathbb{E}[ d_{\xi'_t}S^{\xi'_t}_{u_t}  f(w_t;\xi'_t) | \mathcal{F}_t]
=
\mathbb{E}[\mathbb{E}[d_{\xi'_t}S^{\xi'_t}_{u_t}  f(w_t;\xi'_t) | \mathcal{F}_t, \xi'_t] | \mathcal{F}_t]
= \mathbb{E}[\nabla f(w_t;\xi'_t)]  &=  \nabla F(w_t).  
\end{align*}
The last equality comes from \eqref{eq:111xx}.
\end{proof}

\begin{lem}
Let Assumptions \ref{ass_stronglyconvex}, \ref{ass_smooth} and \ref{ass_convex} hold, $0< \eta_t \leq \frac{1}{2LD}$ for all $t\geq 0$. Then, 
$$
\mathbb{E}[\|w_{t+1}-w_* \|^2] \leq (1-\mu \eta_t) \|w_t-w_*\|^2 + \eta^2_t \frac{ND}{k_t}. 
$$
\end{lem}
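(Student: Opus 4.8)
The plan is to mirror the argument used for the SGD recurrence \eqref{main_ineq_sgd_new02}, but now feeding in the batch-mode versions of the two auxiliary lemmas from this subsection. First I would expand the square directly from the update rule $w_{t+1} = w_t - \eta_t d_{\xi'_t} S^{\xi'_t}_{u_t} \nabla f(w_t;\xi'_t)$:
\begin{align*}
\|w_{t+1}-w_*\|^2 = \|w_t-w_*\|^2 - 2\eta_t \langle d_{\xi'_t} S^{\xi'_t}_{u_t} \nabla f(w_t;\xi'_t),\, w_t-w_*\rangle + \eta_t^2 \|d_{\xi'_t} S^{\xi'_t}_{u_t} \nabla f(w_t;\xi'_t)\|^2.
\end{align*}
Taking $\mathbb{E}[\,\cdot\mid\mathcal{F}_t]$ and applying the batch-mode Lemma~\ref{lem:expect} turns the cross term into $-2\eta_t\langle \nabla F(w_t), w_t-w_*\rangle$ (using $\mathbb{E}[d_{\xi'_t}S^{\xi'_t}_{u_t}\nabla f(w_t;\xi'_t)\mid\mathcal{F}_t] = \nabla F(w_t)$) and bounds the last term by $\eta_t^2 D\, \mathbb{E}[\|\nabla f(w_t;\xi'_t)\|^2\mid\mathcal{F}_t]$.

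Next I would invoke Lemma~\ref{lemma:general_form111} — specifically its third estimate $\mathbb{E}[\|\nabla f(w;\xi'_t)\|^2] \leq 4L[F(w)-F(w_*)] + N/k_t$, conditioned appropriately — to replace the second-moment term, giving
\begin{align*}
\mathbb{E}[\|w_{t+1}-w_*\|^2\mid\mathcal{F}_t] \leq \|w_t-w_*\|^2 - 2\eta_t\langle \nabla F(w_t), w_t-w_*\rangle + 4L D\eta_t^2[F(w_t)-F(w_*)] + \eta_t^2\frac{ND}{k_t}.
\end{align*}
Then I would apply strong convexity in the form $\langle \nabla F(w_t), w_t-w_*\rangle \geq [F(w_t)-F(w_*)] + \tfrac{\mu}{2}\|w_t-w_*\|^2$, which follows from \eqref{eq:stronglyconvex_00} with the roles of $w,w'$ chosen as $w_*,w_t$. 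Collecting the $\|w_t-w_*\|^2$ coefficients yields $(1-\mu\eta_t)$, and the function-gap terms combine into $-2\eta_t(1-2LD\eta_t)[F(w_t)-F(w_*)]$.

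Finally, the step-size restriction $0<\eta_t\leq \frac{1}{2LD}$ makes $1-2LD\eta_t\geq 0$, and since $F(w_t)-F(w_*)\geq 0$ the whole function-gap term is nonpositive and can be dropped, leaving $\mathbb{E}[\|w_{t+1}-w_*\|^2\mid\mathcal{F}_t]\leq (1-\mu\eta_t)\|w_t-w_*\|^2 + \eta_t^2 ND/k_t$; taking total expectation gives the claim. I do not anticipate a genuine obstacle here — the proof is a routine adaptation of the SGD case. The only points requiring care are (i) correctly handling the nested conditional expectations over $u_t$ and $\xi'_t$ so that Lemma~\ref{lem:expect} and Lemma~\ref{lemma:general_form111} are applied in the right order, and (ii) tracking the factor $D$ from the "filter" matrices together with the $1/k_t$ minibatch variance reduction so that they appear multiplicatively as $ND/k_t$ in the final bound.
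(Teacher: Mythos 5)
Your proposal is correct and follows essentially the same route as the paper's proof: expand the square from the update rule, apply the batch-mode Lemma~\ref{lem:expect} to the cross term and the second moment, invoke the bound $\mathbb{E}[\|\nabla f(w_t;\xi'_t)\|^2]\leq 4L[F(w_t)-F(w_*)]+N/k_t$ from Lemma~\ref{lemma:general_form111}, use strong convexity, and drop the nonpositive term $-2\eta_t(1-2LD\eta_t)[F(w_t)-F(w_*)]$ via the step-size condition. No gaps.
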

\begin{proof}
Since $w_{t+1} = w_t - \eta_t  d_{\xi'_t} S^{\xi'_t}_{u_t} \nabla f(w_t;\xi'_t)$, we have 
\begin{align*}
 \|w_{t+1}-w_{*}\|^2 &= \|w_t-w_{*}\|^2 - 2\eta_t \langle  d_{\xi'_t} S^{\xi'_t}_{u_t} \nabla f(w_t;\xi'_t), (w_t-w_{*})\rangle +   \eta_t^2 \|d_{\xi'_t} S^{\xi'_t}_{u_t} \nabla f(w_t;\xi'_t)\|^2.
\end{align*}

We now take expectations over $u_t$ and $\xi_t$ and use Lemmas~\ref{lem:expect} and ~\ref{lemma:general_form111}:
\begin{align*}
&\mathbb{E}[\|w_{t+1} -w_* \|^2 | \mathcal{F}_t] \\
&\leq \|w_t-w_* \|^2 - 2\eta_t \langle \nabla F(w), w_t - w_*\rangle + \eta^2_t D \mathbb{E}[\| \nabla f(w_t;\xi'_t)\|^2|\mathcal{F}_t]
\end{align*}

By \eqref{ass_stronglyconvex}, we have  
\begin{equation}
\label{eq:ppppppp}
 -  \langle \nabla F(w), w_t - w_*\rangle \leq - [F(w)-F(w_*)] - \mu/2 \| w_t - w_*\|^2
\end{equation}
Thus, $\mathbb{E}[\|w_{t+1} -w_* \|^2 | \mathcal{F}_t]$ is at most
\begin{align*}
&\overset{\eqref{eq:ppppppp}}{\leq} \|w_t-w_* \|^2 - \mu \eta_t \| w_t - w_*\|^2 - 2\eta_t[F(w)-F(w_*)] + \eta^2_t D \mathbb{E}[\| \nabla f(w_t;\xi'_t)\|^2|\mathcal{F}_t]. 
\end{align*}

Since $\mathbb{E}[\| \nabla f(w_t;\xi'_t)\|^2|\mathcal{F}_t] \leq 4L [F(w)-F(w_*)] + \frac{N}{k_t}$  (see \eqref{eq:ffffkkkk}), $\mathbb{E}[\|w_{t+1} -w_* \|^2 | \mathcal{F}_t]$ is at most 
\begin{align*}
&\overset{\eqref{eq:ffffkkkk}}{\leq} (1- \mu \eta_t) \| w_t - w_*\|^2 - 2\eta_t(1-2\eta_t LD)[F(w)-F(w_*)] + \eta^2_t \frac{ND}{k_t}. 
\end{align*}
Using the condition $\eta_t \leq \frac{1}{2LD}$ yields the lemma. 
\end{proof}

As shown above, 
$$\mathbb{E}[\| w_{t+1} - w_{*} \|^2 ]  \leq (1-\mu \eta_t) \mathbb{E}[ \| w_{t} - w_{*} \|^2 ] +  \frac{\eta_t^2 ND}{k_t},$$
when $\eta_t \leq \frac{1}{2LD}$.

Let $Y_{t+1}=\mathbb{E}[\| w_{t+1} - w_{*} \|^2 ]$, $Y_{t}=\mathbb{E}[\| w_{t} - w_{*} \|^2 ]$, $\beta_t= 1 - \mu \eta_t$ and $\gamma_t=\frac{\eta^2_tND}{k_t}$. As proved in Lemma~\ref{lemma:hogwild_recursive_form}, if $Y_{t+1} \leq \beta_t Y_t + \gamma_t$, then 
\begin{align*}
Y_{t+1} &\leq \sum_{i=0}^t [\prod_{j=i+1}^t \beta_j] \gamma_i + (\prod_{i=0}^t\beta_i)Y_0\\
&=  \sum_{i=0}^t [\prod_{j=i+1}^t (1 - \mu \eta_j)] \gamma_i + [\prod_{i=0}^t(1 - \mu \eta_i)]\mathbb{E}[\| w_{0} - w_{*} \|^2 ]\\
\end{align*}

Let us define $n(j)=\mu n_j$ and $M(y) = \int_{x=0}^y n(x) dx$ as in Section~\ref{sec:large_stepsize_convergence}. 


\vspace{.3cm}

\noindent
\textbf{Theorem~\ref{theo:asfasasda}}
\textit{Let Assumptions \ref{ass_stronglyconvex}, \ref{ass_smooth} and \ref{ass_convex} hold, $\{\eta_t\}$ is a diminishing sequence with conditions $\sum_{t=0}^{\infty} \eta_t \rightarrow \infty$ and $0< \eta_t \leq \frac{1}{2LD}$ for all $t\geq 0$. Then, the sequence $\{w_t\}$ converges to $w_*$ where}
$$w_{t+1} = w_t -\eta_t d_{\xi'_{t}} S^{\xi'_t}_{u_t} \nabla f(w_t;\xi'_t).$$ 

\vspace{.3cm}

\begin{proof}
To prove the convergence of $w_t$, we only need to prove the convergence of $$C(t) = \exp(-M(t)) \int_{x=0}^t \exp(M(x)) \frac{n(x)^2}{k(x)}dx.$$

Let $T$ denote the total number of gradient computations and define $K(t)= \int_{x=1}^t k(x)dx=T$; we have $t=K^{-1}(T)$ and $\frac{dK(x)}{dx}=k(x)$. We define $y=K(x)$ or $x=K^{-1}(y)$ with $dy=k(x)dx$. We write 
\begin{align*}
C(t) &= \exp(-M(t)) \int_{x=0}^t \exp(M(x)) \frac{n(x)^2}{k(x)}dx \\
&= \exp(-M(K^{-1}(T))) \int_{K(0)}^{K^{-1}(T)}  \exp(M(K^{-1}(y))) \frac{n(x)^2}{k(x)^2} k(x)dx \\
&\leq  \exp(-M(K^{-1}(T))) \int_{1}^{K^{-1}(T)}  \exp(M(K^{-1}(y))) \frac{n(x)^2}{k(x)^2} k(x)dx.
\end{align*}
The last inequality is based on the fact that $K(0) \geq 1$. 

Let us define $n'(x)=\frac{n(x)}{k(x)}$ and using the fact that $dy=k(x)dx$, we obtain
\begin{align*}
C(t) &= C(K^{-1}(T)) \\&\leq \exp(-M(K^{-1}(T))) \int_{1}^{K^{-1}(T)}  \exp(M(K^{-1}(y))) [n'(K^{-1}(y))]^2 dy. 
\end{align*} 

Since $K^{-1}(y)=x$, we have $\frac{dK^{-1}(y)}{dy}=\frac{1}{k(x)}$ where $y=K(x)$. This implies $\frac{dM(K^{-1}(y))}{dy}= \frac{n(K^{-1}(y))}{k(K^{-1}(y))}=n'(K^{-1}(y))$. Hence, by denoting
\begin{align*}
C'(t) &=C(K^{-1}(t)),\\
n'(x) &= \frac{n(x)}{k(x)},\\
M'(x) &= M(K^{-1}(x)),
\end{align*}
we can convert the general problem into the problem of Section~\ref{subsec:convergence_large_stepsize}. 
This implies that the analysis of $C(t)$ in Section~\ref{subsec:convergence_large_stepsize} can directly apply to analyze $C(K^{-1}(T))$. Since we already proved the convergence of $C(t)$ in Section~\ref{subsec:convergence_large_stepsize}, we obtain the theorem.   
\end{proof}

\end{document}